\title[Reduced basis solvers for unfitted methods on parameterized domains]{Reduced basis solvers for unfitted methods on parameterized domains}
\date{\today}
\address{$^\dagger$School of mathematics\\Monash university\\Clayton\\Victoria 3800\\Australia}
\address{$^\P $School of Mathematics and Statistics \\University of Sydney\\New South Wales 2006\\Australia}
\author[N. Mueller]{Nicholas Mueller$^\dagger$}
\email{nicholas.mueller@monash.edu}
\author[S. Badia]{Santiago Badia$^{\dagger}$}
\email{santiago.badia@monash.edu}
\author[Y. Zhao]{Yiran Zhao$^\P$}
\email{yiran.zhao@monash.edu}
  \DeclareSymbolFont{AMSb}{U}{msb}{m}{n}
  \DeclareSymbolFontAlphabet{\mathbb}{AMSb}
\DeclareFontFamily{U}{mathx}{\hyphenchar\font45}
\DeclareFontShape{U}{mathx}{m}{n}{<-> mathx10}{}
\DeclareSymbolFont{mathx}{U}{mathx}{m}{n}
\DeclareMathAccent{\widebar}{0}{mathx}{"73}
\DeclareMathAccent{\widecheck}{0}{mathx}{"71}
\tikzstyle{arrow} = [thick,->,>=stealth]
\tikzset{Matrix/.style={matrix of nodes, font=\footnotesize,text height=1pt, text depth=0.5pt, text width=8.5pt, align=center, column sep=0pt, row sep=0pt, nodes in empty cells}}
\tikzset{cross/.style={cross out, draw=black, minimum size=2*(#1-\pgflinewidth), inner sep=0pt, outer sep=0pt},cross/.default={3pt}}
\newtheorem{remark}{Remark}
\newtheorem{theorem}{Theorem}
\definecolor{shadecolor}{gray}{.92}
\definecolor{incolor}{rgb}{0,0,.7}
\definecolor{outcolor}{rgb}{.65,0,0}
\definecolor{syntaxcolor}{rgb}{.65,0,0}
\definecolor{bg}{rgb}{0.93,0.93,0.93}
\definecolor{myblue}{RGB}{93,188,210}
\definecolor{mygreen}{RGB}{189,210,93}
\definecolor{myorange}{RGB}{210,173,93}
\definecolor{myred}{RGB}{210,93,130}
\definecolor{mydarkblue}{RGB}{93,130,210}
\definecolor{mydarkgreen}{RGB}{93,210,173}
\renewcommand{\vec}[1]{\underline{#1}}
\newcommand{\R}{\mathbb{R}}
\newcommand{\norm}[1]{\left\lVert#1\right\rVert}
\DeclareMathOperator*{\argmax}{\arg\max}
\acrodef{pde}[PDE]{partial differential equation}
\acrodef{fe}[FE]{finite element}
\acrodef{fem}[FEM]{finite element method}
\acrodef{dof}[DOF]{degree of freedom}
\acrodef{be}[BE]{Backward Euler}
\acrodef{hf}[HF]{high fidelity}
\acrodef{fom}[FOM]{full order model}
\acrodef{lhs}[LHS]{left hand side}
\acrodef{rhs}[RHS]{right hand side}
\acrodef{rom}[ROM]{reduced order model}
\acrodef{rb}[RB]{reduced basis}
\acrodef{svd}[SVD]{singular value decomposition}
\acrodef{sthosvd}[ST-HOSVD]{sequentially truncated high order singular value decomposition}
\acrodef{pod}[POD]{proper orthogonal decomposition}
\acrodef{tpod}[TPOD]{truncated proper orthogonal decomposition}
\acrodef{strb}[ST-RB]{space-time reduced basis}
\acrodef{eim}[EIM]{empirical interpolation method}
\acrodef{deim}[DEIM]{discrete empirical interpolation method}
\acrodef{mdeim}[MDEIM]{empirical interpolation method in matrix form}
\acrodef{stmdeimrb}[ST-MDEIM-RB]{space-time MDEIM-RB}
\acrodef{dl}[DL]{deep learning}
\acrodef{nn}[NN]{neural network}
\acrodef{tt}[TT]{tensor-train}
\acrodef{ttrb}[TT-RB]{tensor-train reduced basis}
\acrodef{podrb}[POD-RB]{proper orthogonal decomposition reduced basis}
\acrodef{ttsvd}[TT-SVD]{tensor-train SVD}
\acrodef{ttcross}[TT-CROSS]{tensor-train cross}
\acrodef{ttmdeim}[TT-MDEIM]{tensor-train MDEIM}
\acrodef{jit}[JIT]{just-in-time}
\acrodef{rbf}[RBF]{radial basis function}
\acrodef{ale}[ALE]{arbitrary Eulerian-Lagriangian}
\begin{document}

\maketitle

\begin{abstract}
    In this paper, we present a unified framework for reduced basis approximations of parametrized partial differential equations defined on parameter-dependent domains. Our approach combines unfitted finite element methods with both classical and tensor-based reduced basis techniques -- particularly the tensor-train reduced basis method -- to enable efficient and accurate model reduction on general geometries. To address the challenge of reconciling geometric variability with fixed-dimensional snapshot representations, we adopt a deformation-based strategy that maps a reference configuration to each parameterized domain. Furthermore, we introduce a localization procedure to construct dictionaries of reduced subspaces and hyper-reduction approximations, which are obtained via matrix discrete empirical interpolation in our work. We extend the proposed framework to saddle-point problems by adapting the supremizer enrichment strategy to unfitted methods and deformed configurations, demonstrating that the supremizer operator can be defined on the reference configuration without loss of stability. Numerical experiments on two- and three-dimensional problems -- including Poisson, linear elasticity, incompressible Stokes and Navier-Stokes equations -- demonstrate the flexibility, accuracy and efficiency of the proposed methodology.
\end{abstract}

\section{Introduction}
\label{sec: introduction}

Projection-based \acp{rom} are advanced numerical techniques designed to approximate parametric \ac{hf} models, which typically involve finely resolved spatio-temporal discretizations of \acp{pde}. These methods aim to capture the \ac{hf} parameter-to-solution manifold within a carefully chosen vector subspace. The process generally consists of a computationally intensive offline phase, during which the subspace is constructed and a (Petrov-)Galerkin projection of the \ac{hf} equations onto this subspace is performed. This stage often includes the hyper-reduction of nonaffinely parameterized \ac{hf} quantities, such as residuals and Jacobians. Once the reduced model is assembled, a subsequent online phase enables the rapid evaluation of accurate solutions for new parameter values, at a cost largely independent of the full-order problem size.

The \ac{rb} method is one of the most prominent projection-based \acp{rom}, with a broad range of applications, including unsteady \cite{doi:10.1137/22M1509114,MUELLER2024115767}, nonlinear \cite{quarteroni2015reduced,dalsanto2019hyper}, and saddle-point problems \cite{ballarin2015supremizer,negri2015reduced}. It constructs the projection subspace using a dictionary of high-fidelity solutions -- commonly referred to as snapshots. For reducible problems \cite{quarteroni2015reduced}, i.e., those whose parameter-to-solution maps exhibit low Kolmogorov $N$-width \cite{Unger_2019}, the reduced space can be of much smaller dimension than the \ac{hf} space, while retaining high accuracy. Subspace extraction techniques include direct compression methods \cite{kunisch2001galerkin,Legresley2006Alonso} and greedy algorithms \cite{prudhomme:hal-00798326,Prudhomme}. Direct approaches benefit from a priori estimates, such as those derived from the Schmidt-Eckart-Young Theorem \cite{Eckart_Young_1936,Mirsky1960}, which underpins the widely used \ac{tpod} method \cite{Volkwein2012}. Greedy procedures, in contrast, iteratively build the reduced basis using a posteriori error estimators, though often at a higher computational cost. More recently, tensor-based approaches such as the \ac{ttrb} method \cite{MAMONOV2022115122,mamonov2024priorianalysistensorrom,MUELLER2026116790} have been proposed, in which snapshots are treated as multidimensional arrays and compressed along Cartesian axes using \ac{ttsvd} or \ac{ttcross} \cite{oseledets2011tensor,gorodetsky2019continuous,oseledets2010tt,bigoni2016spectral,cui2021deep,dolgov2018approximation}. These methods offer cheaper offline costs, but in the existing literature their applicability has been limited to problems posed on tensor-product grids -- a restriction that the present work overcomes.

A multi-purpose \ac{rom} framework implementing both classical approaches as well as the more novel \ac{ttrb} on arbitrarily parameterized domains can be effectively built upon unfitted (or embedded-boundary) \ac{fe} methods \cite{HANSBO20025537,https://doi.org/10.1002/nme.4823,Elfverson_2018,Belytschko2001}. These techniques allow for the discretization of \acp{pde} on complex geometries without requiring body-fitted meshes, which are often challenging to generate. Instead, a background Cartesian mesh is used to define the \ac{fe} space, while the geometry is specified analytically. Although early unfitted approaches suffered from ill-conditioning due to small cut cells \cite{DEPRENTER2017297}, more recent developments such as ghost penalty methods \cite{https://doi.org/10.1002/nme.4823,BurmanHansbo} and the aggregated \ac{fe} method \cite{BADIA2018533} have addressed these issues, achieving mesh-independent condition numbers, controlled penalty parameters, and optimal convergence rates in both mesh size and polynomial order.

Despite the advantages offered by unfitted \ac{fe} methods, constructing \acp{rom} on parameterized geometries remains a challenging task. The first major difficulty lies in reconciling geometric variability with the requirement that all snapshots reside in a fixed-dimensional vector space. Two primary strategies have been developed to address this issue. The first, known as the extension-based approach, involves computing each snapshot on its parameterized domain and subsequently extending it onto the background mesh. This method is widely adopted in embedded-domain frameworks due to its conceptual simplicity and compatibility with structured discretizations \cite{NOUY20113066,BALAJEWICZ2014489,karatzas2021reducedordermodelstable,Zeng_2022,Katsouleas2020,Karatzas_2019,KARATZAS2020833}, but the quest for a robust and accurate extension process is still very active.

A more accurate alternative -- though geometrically less flexible \cite{CIRROTTOLA2021110177} -- is the deformation-based approach. In this setting, a fixed reference configuration is employed, and each parameterized domain is mapped onto it through a smooth deformation map. The weak form of the \ac{pde} is then integrated over the reference mesh via pull-back operations. This avoids remeshing and artificial extensions altogether, preserving snapshot fidelity. However, it requires solving additional high-fidelity systems to compute the deformation maps. Common choices include harmonic \cite{Baker2002MeshMovement}, biharmonic \cite{Helenbrook2003Biharmonic}, and elastic deformations \cite{Stein2003MeshMovingFSI,Stein2004SolidExtension,Tezduyar1992ASME}, many of which stem from \ac{ale} formulations in fluid-structure interaction \cite{Wick2011MeshMotionFSI,BASTING2017312}. In this work, we adopt the deformation-based approach as our default strategy; for \ac{ttrb} approximations, we supplement it with an extension-based procedure to ensure that snapshot values are defined at every index of the background Cartesian grid.

A second major challenge arises from the limited reducibility of problems posed on parameterized domains. To mitigate this, we employ a localization strategy that enhances the compressibility of the reduced model. The use of local reduced subspaces has recently gained attention as an effective means to achieve meaningful dimensionality reduction \cite{https://doi.org/10.1002/nme.4371,doi:10.1137/130924408,PAGANI2018530}. Among the few works incorporating this idea within unfitted discretization frameworks, \cite{CHASAPI2023115997} constructs local subspaces for both projection and hyper-reduction. Their approach relies on traditional \ac{tpod} bases and local hyper-reduction via \ac{rbf} interpolation \cite{10.1093/oso/9780198534396.003.0003}. By contrast, we explore both \ac{tpod} and \ac{tt} subspaces in conjunction with \ac{mdeim}-based hyper-reduction. While the latter is more computationally expensive, we find it substantially more accurate than \ac{rbf}-based methods, making it better suited for the high-fidelity requirements of our applications. Another key distinction from \cite{CHASAPI2023115997} lies in our reliance on deformation-based rather than extension-based snapshot representations, which, in our experience, results in improved compressibility and overall reducibility of the problem.

In summary, the main contributions of the work are the following. 
\begin{itemize}
    \item We develop an interface that combines an unfitted \ac{fe} method with deformation mappings from a fixed reference configuration to physical domains, enabling the generation of \ac{hf} snapshots on a background Cartesian grid. This framework facilitates the straightforward application of both classical and tensor-based \ac{rb} methods -- namely \ac{ttrb} -- not only on general geometries, but also on parameterized ones. To the best of our knowledge, this represents the first such attempt in the literature. 
    \item We implement a localized strategy to construct dictionaries of reduced subspaces and corresponding hyper-reduction approximations for the residuals and Jacobians of parameterized problems. While this idea was previously explored in \cite{CHASAPI2023115997}, our framework employs \ac{mdeim}-based hyper-reduction, which we find to be significantly more accurate than their \ac{rbf}-based approach.
    \item We extend the applicability of unfitted \acp{rom} on parameterized geometries to saddle-point problems. Specifically, we demonstrate that the supremizer enrichment technique introduced in \cite{ballarin2015supremizer} for ensuring the well-posedness of reduced saddle-point systems can be adapted to our problems by computing the supremizer operator on the reference configuration, instead of on the deformed one. This significantly improves the feasibility and efficiency of the enrichment procedure.
\end{itemize}

This article is organized as follows. In Section \ref{sec: unfitted fe method}, we introduce the \ac{fom}, which consists of a generic parameterized \ac{pde} solved using the aggregated \ac{fe} method in combination with deformation mappings. Section \ref{sec: rb} provides a general overview of the \ac{rom} corresponding to the full-order problem, while Section \ref{sec: ttrb} focuses specifically on the \ac{ttrb} method. In Section \ref{sec: saddle-point}, we extend the discussion to stabilized \ac{rb} formulations for saddle-point problems discretized using the aggregated \ac{fe} method and deformation mappings. Section \ref{sec: results} presents numerical results across several benchmark test cases. Finally, in Section \ref{sec: conclusions}, we summarize the main findings and outline possible directions for future work.

\section{Full order model}
\label{sec: unfitted fe method}

In this section, we present the \ac{hf} framework used for solving parameterized \acp{pde}. We first define the notion of mesh deformation maps for efficient handling of shape parameters. We then present the aggregated cell method \cite{BADIA2018533}, which serves as the unfitted scheme employed in this work. Next, we discuss the extension of nodal values from the physical domain to the background grid. Finally, we define the induced norms on the previously introduced \ac{fe} spaces.

\subsection{Mesh deformation techniques}
\label{subs: mesh def}
We introduce a \textit{reference} spatial domain $\widetilde{\Omega} \subset \R^d$, where $d$ denotes the spatial dimension. The reference boundary $\partial\widetilde{\Omega}$ is partitioned into two portion with prescribed non-zero $\widetilde{\Gamma}$ and zero displacement $\partial\widetilde{\Omega} \setminus \widetilde{\Gamma}$, resp. On $\widetilde{\Omega}$ (resp. $\widetilde{\Gamma}$), we define a conforming, quasi-uniform mesh denoted by $\widetilde{\Omega}_h$ (resp. $\widetilde{\Gamma}_h$), with mesh size $h$. A boundary displacement $\vec{\psi}_{\Gamma}: \widetilde{\Gamma}_h \to \Gamma_h$ is prescribed on $\widetilde{\Gamma}$ . The goal of mesh deformation techniques is to construct a deformation map $\vec{\psi}: \widetilde{\Omega}_h \to \Omega_h$ as a diffeomorphism that extends $\vec{\psi}_{\Gamma}$ to the bulk. Once $\vec{\psi}$ is computed, the deformed domain is defined as
\begin{equation*}
    \Omega = \{\bm{x} \in \R^d : \bm{x} = \widetilde{\bm{x}} + \vec{\psi}(\widetilde{\bm{x}}), \ \widetilde{\bm{x}} \in \widetilde{\Omega} \}.
\end{equation*}
In this work, we employ elastic deformations, where $\vec{\psi}$ solves the linear elasticity problem:
\begin{align} 
    \label{eq: strong form elasticity equation}
    \left\{
    \begin{aligned}
        -\bm{\nabla} \cdot (\vec{\vec{\sigma}}(\vec{\psi})) &= \vec{0}  &&\text{in } \widetilde{\Omega}, \\
        \vec{\psi} &= \vec{\psi}_{\Gamma} &&\text{on } \widetilde{\Gamma}, \\
        \vec{\psi} &= \vec{0} &&\text{on } \partial \widetilde{\Omega} \setminus \widetilde{\Gamma},
    \end{aligned} 
    \right.
\end{align}
with stress tensor
\begin{equation}
    \label{eq: stress tensor}
    \vec{\vec{\sigma}}(\vec{\psi}) = 2\gamma \vec{\vec{\epsilon}}(\vec{\psi}) + \lambda \bm{\nabla} \cdot \vec{\psi} \ \vec{\vec{I}}_d,
\end{equation}
where $\vec{\vec{\epsilon}}$ is the symmetric gradient, $\vec{\vec{I}}_d$ is the $ d\times d$ identity matrix, and $\lambda$, $\gamma$ are the Lamé coefficients, expressed in terms of the Young modulus $E$ and the Poisson ratio $\nu$ as:
\begin{align}
    \lambda = \frac{E \nu}{(1+\nu)(1-2\nu)}, \qquad
    \gamma = \frac{E}{2(1+\nu)}.
\end{align}  
To obtain the deformed mesh, we discretize \eqref{eq: strong form elasticity equation} using the \ac{fe} method and solve for the discrete displacement $\vec{\psi}_h$. The deformed mesh is then given by
\begin{equation*} 
    \Omega_h = \{\bm{x} \in \R^d : \bm{x} = \widetilde{\bm{x}} + \vec{\psi}_h(\widetilde{\bm{x}}), \widetilde{\bm{x}} \in \widetilde{\Omega}_h \}.
\end{equation*}
An example of a deformed mesh is shown in Fig.~\ref{fig:domains}.

Mesh deformation maps are widely used in fluid-structure interaction problems based on \ac{ale} formulations \cite{Wick2011MeshMotionFSI,BASTING2017312}. They are also valuable in problems involving shape parameters, where one aims to define families of parameterized domains. For instance, consider a \ac{pde} defined on the unit square with a circular hole whose center moves along the diagonal and whose radius varies. The domain is parameterized as:
\begin{equation}
    \Omega(\bm{\mu}) \doteq [0,1]^2 \setminus \mathcal{B}_{(\mu_1,\mu_1)}(\mu_2), \quad 
    \bm{\mu} \doteq (\mu_1,\mu_2),
\end{equation}
where $\mathcal{B}_{\bm{c}}(R)$ denotes a ball of radius $R$ centered at $\bm{c}$. Solving a \ac{pde} on $\Omega(\bm{\mu})$ for multiple values of $\bm{\mu}$ can be costly in standard \ac{fe} codes due to repeated remeshing and basis construction. This difficulty is alleviated by mesh deformation techniques \cite{shamanskiy2020mesh}, which proceed as follows:
\begin{itemize}
    \item Choose a reference parameter $\widetilde{\bm{\mu}}$ corresponding to the reference domain $\widetilde{\Omega}$ -- ideally one that minimizes mesh skewness (e.g., a hole centered in the square).
    \item Solve \eqref{eq: strong form elasticity equation} with boundary displacement
    \begin{equation}
        \vec{\psi}_{\Gamma}(\bm{\mu}): (x,y) \mapsto \left(\mu_1-x+\mu_2(x-\widetilde{\mu}_1)/\widetilde{\mu}_2,\ \mu_1-y+\mu_2(y-\widetilde{\mu}_1)/\widetilde{\mu}_2\right)
    \end{equation}
    applied on $\widetilde{\Gamma} \doteq \partial \mathcal{B}_{(\widetilde{\mu}_1,\widetilde{\mu}_1)}(\widetilde{\mu}_2)$, yielding a deformation map $\vec{\psi}(\bm{\mu})$.
    \item Recast the original \ac{pde} -- defined on $\Omega(\bm{\mu})$ -- onto the reference configuration $\widetilde{\Omega}$.
\end{itemize}

\begin{figure}[t]
    \centering
    \begin{tikzpicture}
        \node[anchor=south west, inner sep=0] (img1) at (0,0) {\includegraphics[width=0.5\textwidth]{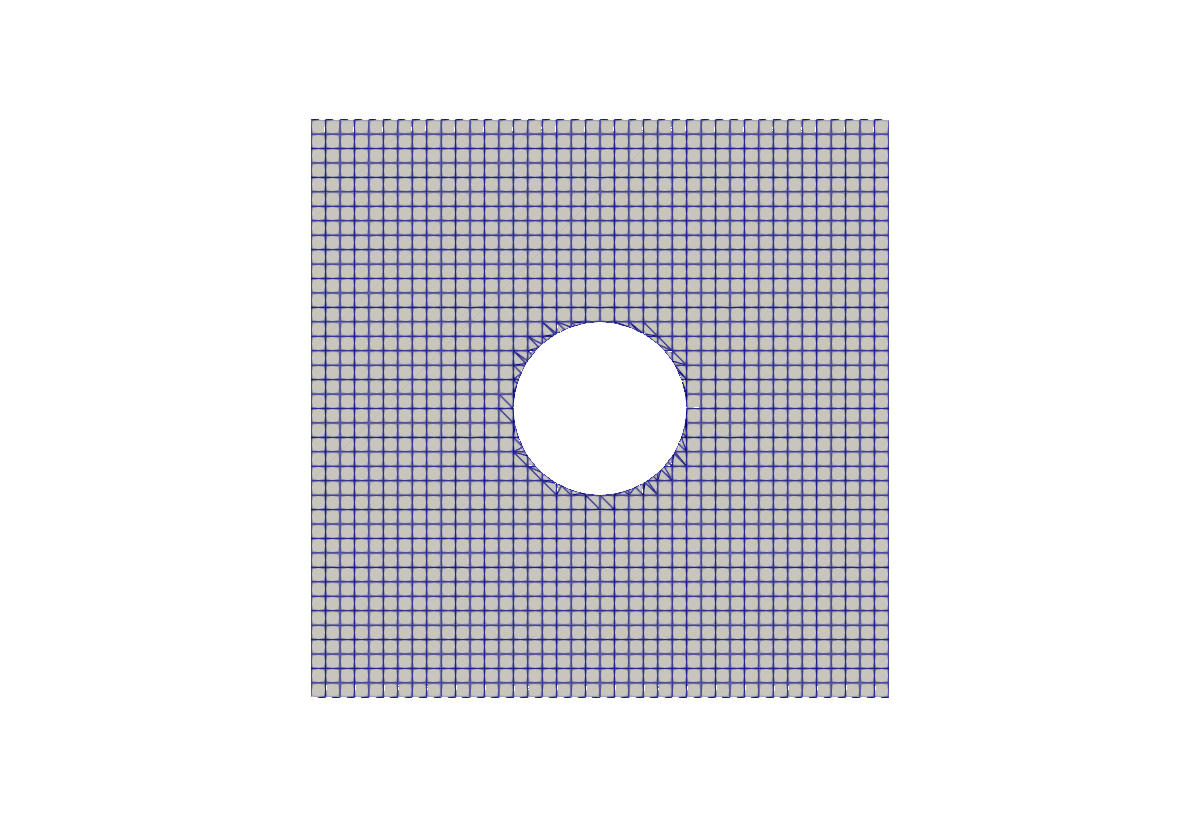}};
        \node at ($(img1.north) - (0.0,0.2)$) {$\widetilde{\Omega}_h$};

        \node[anchor=south west, inner sep=0] (img2) at (6,0) {\includegraphics[width=0.5\textwidth]{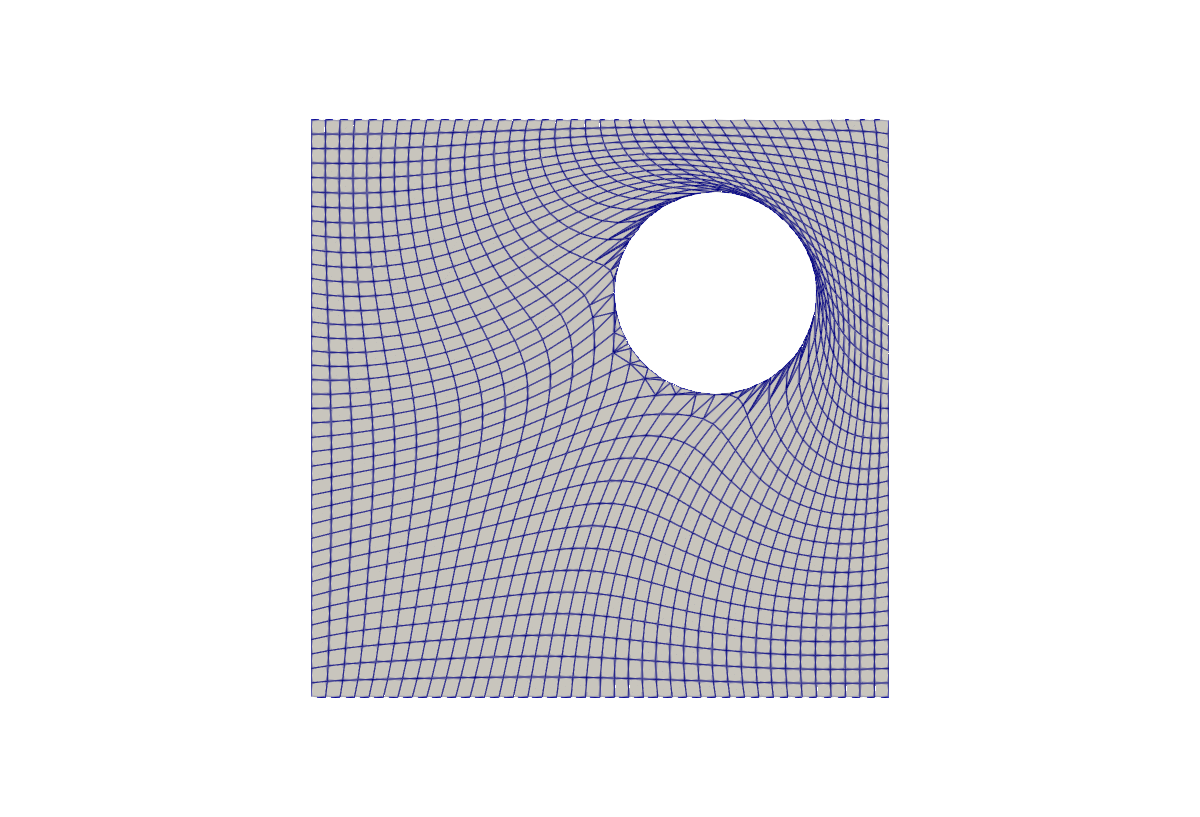}};
        \node at ($(img2.north) - (0.0,0.2)$) {$\Omega_h(\bm{\mu})$};
    \end{tikzpicture}
    \caption{Reference and deformed mesh obtained respectively for $\widetilde{\bm{\mu}} = (0.5,0.12)$ and $\bm{\mu} = (0.6,0.15)$.}
    \label{fig:domains}
\end{figure}

To achieve the last step, gradients, integral measures, and boundary normals on the deformed configuration must be pulled back to the reference configuration. Denoting the Jacobian of the deformation map by $\vec{\vec{J}}(\bm{\mu}) \doteq \pmb{\nabla} \vec{\psi} \in \R^{d \times d}$, the following formulas hold:
\begin{align} 
    \label{eq: mapped operators}
    \begin{aligned}
        \bm{\nabla} &\doteq \vec{\vec{J}}^{-T}(\bm{\mu}) \widetilde{\bm{\nabla}}, \quad 
        &&\int_{\Omega(\bm{\mu})} d\Omega(\bm{\mu}) &&\doteq \int_{\widetilde{\Omega}} \mathrm{det}(\vec{\vec{J}}(\bm{\mu})) d\widetilde{\Omega}, \\ 
        \vec{n}(\bm{\mu}) &\doteq \frac{ \vec{\vec{J}}^{-T}(\bm{\mu}) \widetilde{\vec{n}} }{\|\vec{\vec{J}}^{-T}(\bm{\mu}) \widetilde{\vec{n}}\|_2}, \quad 
        &&\int_{\Gamma(\bm{\mu})} d\Gamma(\bm{\mu}) &&\doteq \int_{\widetilde{\Gamma}}\|\vec{\vec{J}}^{-T}(\bm{\mu}) \widetilde{\vec{n}}\|_2 \mathrm{det}(\vec{\vec{J}}(\bm{\mu})) d\widetilde{\Gamma}, 
    \end{aligned} 
\end{align}
where:
\begin{itemize}
    \item $\bm{\nabla}$ (resp. $\widetilde{\bm{\nabla}}$) is the gradient operator on the deformed (resp. reference) domain;
    \item $d\Omega(\bm{\mu})$ (resp. $d\widetilde{\Omega}$) is the volume measure on the deformed (resp. reference) domain;
    \item $d\Gamma(\bm{\mu})$ (resp. $d\widetilde{\Gamma}$) is the surface measure on the deformed (resp. reference) boundary;
    \item $\vec{n}(\bm{\mu})$ (resp. $\widetilde{\vec{n}}$) is the outward unit normal vector on the deformed (resp. reference) boundary.
\end{itemize}
Finally, all physical quantities (e.g., boundary conditions, coefficients) must be expressed in the reference configuration via composition with the inverse mapping $\vec{\psi}(\bm{\mu})^{-1}$. The relations in \eqref{eq: mapped operators} enable the definition of weak forms and finite element spaces on the reference configuration, and can thus be reused for all parameters. Throughout this work, unless otherwise stated, all expressions involving integration over deformed configurations are understood via the transformations in \eqref{eq: mapped operators}.

\subsection{Aggregated cell method}
\label{subs: agfem}

We consider a space of shape parameters $\mathcal{D} \subset \R^p$ that determines the deformed domain $\Omega(\bm{\mu})$. We also define a non-parametric background domain $\widehat{\Omega}$, which serves as a bounding box for $\Omega(\bm{\mu})$ for any choice of $\bm{\mu}$. As before, we denote by $\widehat{\Omega}_h$ a conforming, quasi-uniform partition of $\widehat{\Omega}$, with the additional requirement that it be quadrangular. The cells of the partition can be classified as follows:
\begin{itemize}
    \item Cells that are fully contained within $\widetilde{\Omega}$ are \textit{internal}.
    \item Cells that are fully contained within $\widehat{\Omega} \setminus \widetilde{\Omega}$ are \textit{external}.
    \item The remaining are \textit{cut} cells.
\end{itemize}
Fig. \ref{fig: embedded cells} provides a graphical representation of the classification above. We refer to the set of \textit{active} cells as the union of internal and cut cells. As illustrated in Fig. \ref{fig: embedded nodes}, we may also label the nodes as active if they belong to an active cell, and as inactive (or external) otherwise. 
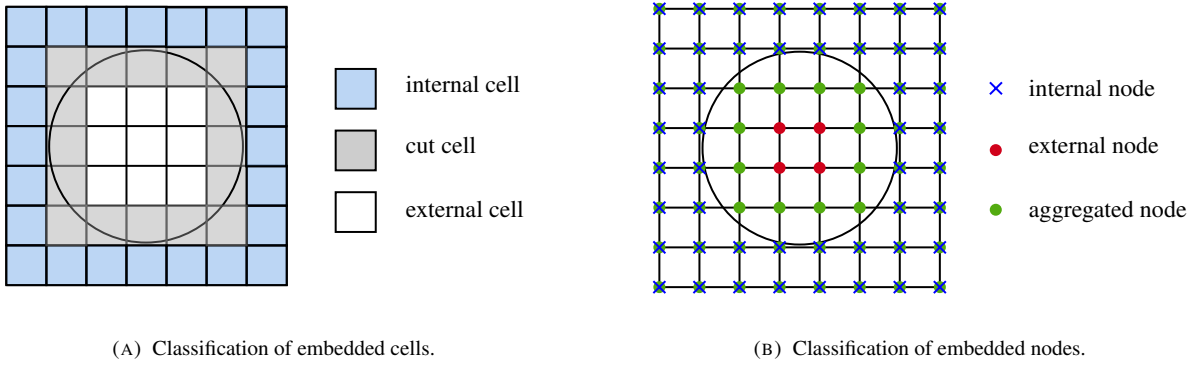
\begin{figure}[t]
    \centering
    \tikzset{every picture/.style={line width=0.75pt}} 

    \begin{tikzpicture}[x=0.75pt,y=0.75pt,yscale=-1,xscale=1]
    
    \draw  [draw opacity=0] (47,19) -- (187.42,19) -- (187.42,159.25) -- (47,159.25) -- cycle ; \draw   (47,19) -- (47,159.25)(67,19) -- (67,159.25)(87,19) -- (87,159.25)(107,19) -- (107,159.25)(127,19) -- (127,159.25)(147,19) -- (147,159.25)(167,19) -- (167,159.25)(187,19) -- (187,159.25) ; \draw   (47,19) -- (187.42,19)(47,39) -- (187.42,39)(47,59) -- (187.42,59)(47,79) -- (187.42,79)(47,99) -- (187.42,99)(47,119) -- (187.42,119)(47,139) -- (187.42,139)(47,159) -- (187.42,159) ; \draw    ;
    \draw  [color={rgb, 255:red, 155; green, 155; blue, 155 }  ,draw opacity=0.4 ][fill={rgb, 255:red, 155; green, 155; blue, 155 }  ,fill opacity=0.4 ] (67,99) -- (87,99) -- (87,119) -- (67,119) -- cycle ;
    \draw  [color={rgb, 255:red, 155; green, 155; blue, 155 }  ,draw opacity=0.4 ][fill={rgb, 255:red, 155; green, 155; blue, 155 }  ,fill opacity=0.4 ] (147,99) -- (167,99) -- (167,119) -- (147,119) -- cycle ;
    \draw  [color={rgb, 255:red, 155; green, 155; blue, 155 }  ,draw opacity=0.4 ][fill={rgb, 255:red, 155; green, 155; blue, 155 }  ,fill opacity=0.4 ] (67,79) -- (87,79) -- (87,99) -- (67,99) -- cycle ;
    \draw  [color={rgb, 255:red, 155; green, 155; blue, 155 }  ,draw opacity=0.4 ][fill={rgb, 255:red, 155; green, 155; blue, 155 }  ,fill opacity=0.4 ] (147,59) -- (167,59) -- (167,79) -- (147,79) -- cycle ;
    \draw  [color={rgb, 255:red, 155; green, 155; blue, 155 }  ,draw opacity=0.4 ][fill={rgb, 255:red, 155; green, 155; blue, 155 }  ,fill opacity=0.4 ] (67,59) -- (87,59) -- (87,79) -- (67,79) -- cycle ;
    \draw  [color={rgb, 255:red, 155; green, 155; blue, 155 }  ,draw opacity=0.4 ][fill={rgb, 255:red, 155; green, 155; blue, 155 }  ,fill opacity=0.4 ] (127,39) -- (147,39) -- (147,59) -- (127,59) -- cycle ;
    \draw  [draw opacity=0] (373,20) -- (513.42,20) -- (513.42,160.25) -- (373,160.25) -- cycle ; \draw   (373,20) -- (373,160.25)(393,20) -- (393,160.25)(413,20) -- (413,160.25)(433,20) -- (433,160.25)(453,20) -- (453,160.25)(473,20) -- (473,160.25)(493,20) -- (493,160.25)(513,20) -- (513,160.25) ; \draw   (373,20) -- (513.42,20)(373,40) -- (513.42,40)(373,60) -- (513.42,60)(373,80) -- (513.42,80)(373,100) -- (513.42,100)(373,120) -- (513.42,120)(373,140) -- (513.42,140)(373,160) -- (513.42,160) ; \draw    ;
    \draw  [color={rgb, 255:red, 0; green, 0; blue, 0 }  ,draw opacity=1 ][fill={rgb, 255:red, 74; green, 144; blue, 226 }  ,fill opacity=0.37 ] (211.42,50.25) -- (231.42,50.25) -- (231.42,70.25) -- (211.42,70.25) -- cycle ;
    \draw  [color={rgb, 255:red, 0; green, 0; blue, 0 }  ,draw opacity=1 ][fill={rgb, 255:red, 155; green, 155; blue, 155 }  ,fill opacity=0.5 ] (211.42,81.25) -- (231.42,81.25) -- (231.42,101.25) -- (211.42,101.25) -- cycle ;
    \draw  [color={rgb, 255:red, 0; green, 0; blue, 0 }  ,draw opacity=1 ][fill={rgb, 255:red, 255; green, 255; blue, 255 }  ,fill opacity=1 ] (211.42,112.25) -- (231.42,112.25) -- (231.42,132.25) -- (211.42,132.25) -- cycle ;
    \draw  [color={rgb, 255:red, 208; green, 2; blue, 27 }  ,draw opacity=1 ][fill={rgb, 255:red, 208; green, 2; blue, 27 }  ,fill opacity=1 ] (450.5,80) .. controls (450.5,78.62) and (451.62,77.5) .. (453,77.5) .. controls (454.38,77.5) and (455.5,78.62) .. (455.5,80) .. controls (455.5,81.38) and (454.38,82.5) .. (453,82.5) .. controls (451.62,82.5) and (450.5,81.38) .. (450.5,80) -- cycle ;
    \draw  [color={rgb, 255:red, 208; green, 2; blue, 27 }  ,draw opacity=1 ][fill={rgb, 255:red, 208; green, 2; blue, 27 }  ,fill opacity=1 ] (430.5,80) .. controls (430.5,78.62) and (431.62,77.5) .. (433,77.5) .. controls (434.38,77.5) and (435.5,78.62) .. (435.5,80) .. controls (435.5,81.38) and (434.38,82.5) .. (433,82.5) .. controls (431.62,82.5) and (430.5,81.38) .. (430.5,80) -- cycle ;
    \draw  [color={rgb, 255:red, 208; green, 2; blue, 27 }  ,draw opacity=1 ][fill={rgb, 255:red, 208; green, 2; blue, 27 }  ,fill opacity=1 ] (450.5,100) .. controls (450.5,98.62) and (451.62,97.5) .. (453,97.5) .. controls (454.38,97.5) and (455.5,98.62) .. (455.5,100) .. controls (455.5,101.38) and (454.38,102.5) .. (453,102.5) .. controls (451.62,102.5) and (450.5,101.38) .. (450.5,100) -- cycle ;
    \draw  [color={rgb, 255:red, 208; green, 2; blue, 27 }  ,draw opacity=1 ][fill={rgb, 255:red, 208; green, 2; blue, 27 }  ,fill opacity=1 ] (430.5,100) .. controls (430.5,98.62) and (431.62,97.5) .. (433,97.5) .. controls (434.38,97.5) and (435.5,98.62) .. (435.5,100) .. controls (435.5,101.38) and (434.38,102.5) .. (433,102.5) .. controls (431.62,102.5) and (430.5,101.38) .. (430.5,100) -- cycle ;
    \draw  [color={rgb, 255:red, 83; green, 171; blue, 16 }  ,draw opacity=1 ][fill={rgb, 255:red, 83; green, 171; blue, 16 }  ,fill opacity=1 ] (490.5,20) .. controls (490.5,18.62) and (491.62,17.5) .. (493,17.5) .. controls (494.38,17.5) and (495.5,18.62) .. (495.5,20) .. controls (495.5,21.38) and (494.38,22.5) .. (493,22.5) .. controls (491.62,22.5) and (490.5,21.38) .. (490.5,20) -- cycle ;
    \draw  [color={rgb, 255:red, 83; green, 171; blue, 16 }  ,draw opacity=1 ][fill={rgb, 255:red, 83; green, 171; blue, 16 }  ,fill opacity=1 ] (510.5,40) .. controls (510.5,38.62) and (511.62,37.5) .. (513,37.5) .. controls (514.38,37.5) and (515.5,38.62) .. (515.5,40) .. controls (515.5,41.38) and (514.38,42.5) .. (513,42.5) .. controls (511.62,42.5) and (510.5,41.38) .. (510.5,40) -- cycle ;
    \draw  [color={rgb, 255:red, 83; green, 171; blue, 16 }  ,draw opacity=1 ][fill={rgb, 255:red, 83; green, 171; blue, 16 }  ,fill opacity=1 ] (510.5,60) .. controls (510.5,58.62) and (511.62,57.5) .. (513,57.5) .. controls (514.38,57.5) and (515.5,58.62) .. (515.5,60) .. controls (515.5,61.38) and (514.38,62.5) .. (513,62.5) .. controls (511.62,62.5) and (510.5,61.38) .. (510.5,60) -- cycle ;
    \draw  [color={rgb, 255:red, 83; green, 171; blue, 16 }  ,draw opacity=1 ][fill={rgb, 255:red, 83; green, 171; blue, 16 }  ,fill opacity=1 ] (510.5,80) .. controls (510.5,78.62) and (511.62,77.5) .. (513,77.5) .. controls (514.38,77.5) and (515.5,78.62) .. (515.5,80) .. controls (515.5,81.38) and (514.38,82.5) .. (513,82.5) .. controls (511.62,82.5) and (510.5,81.38) .. (510.5,80) -- cycle ;
    \draw  [color={rgb, 255:red, 83; green, 171; blue, 16 }  ,draw opacity=1 ][fill={rgb, 255:red, 83; green, 171; blue, 16 }  ,fill opacity=1 ] (510.5,100) .. controls (510.5,98.62) and (511.62,97.5) .. (513,97.5) .. controls (514.38,97.5) and (515.5,98.62) .. (515.5,100) .. controls (515.5,101.38) and (514.38,102.5) .. (513,102.5) .. controls (511.62,102.5) and (510.5,101.38) .. (510.5,100) -- cycle ;
    \draw  [color={rgb, 255:red, 83; green, 171; blue, 16 }  ,draw opacity=1 ][fill={rgb, 255:red, 83; green, 171; blue, 16 }  ,fill opacity=1 ] (510.5,120) .. controls (510.5,118.62) and (511.62,117.5) .. (513,117.5) .. controls (514.38,117.5) and (515.5,118.62) .. (515.5,120) .. controls (515.5,121.38) and (514.38,122.5) .. (513,122.5) .. controls (511.62,122.5) and (510.5,121.38) .. (510.5,120) -- cycle ;
    \draw  [color={rgb, 255:red, 83; green, 171; blue, 16 }  ,draw opacity=1 ][fill={rgb, 255:red, 83; green, 171; blue, 16 }  ,fill opacity=1 ] (510.5,140) .. controls (510.5,138.62) and (511.62,137.5) .. (513,137.5) .. controls (514.38,137.5) and (515.5,138.62) .. (515.5,140) .. controls (515.5,141.38) and (514.38,142.5) .. (513,142.5) .. controls (511.62,142.5) and (510.5,141.38) .. (510.5,140) -- cycle ;
    \draw  [color={rgb, 255:red, 83; green, 171; blue, 16 }  ,draw opacity=1 ][fill={rgb, 255:red, 83; green, 171; blue, 16 }  ,fill opacity=1 ] (510.5,160) .. controls (510.5,158.62) and (511.62,157.5) .. (513,157.5) .. controls (514.38,157.5) and (515.5,158.62) .. (515.5,160) .. controls (515.5,161.38) and (514.38,162.5) .. (513,162.5) .. controls (511.62,162.5) and (510.5,161.38) .. (510.5,160) -- cycle ;
    \draw  [color={rgb, 255:red, 83; green, 171; blue, 16 }  ,draw opacity=1 ][fill={rgb, 255:red, 83; green, 171; blue, 16 }  ,fill opacity=1 ] (510.5,20) .. controls (510.5,18.62) and (511.62,17.5) .. (513,17.5) .. controls (514.38,17.5) and (515.5,18.62) .. (515.5,20) .. controls (515.5,21.38) and (514.38,22.5) .. (513,22.5) .. controls (511.62,22.5) and (510.5,21.38) .. (510.5,20) -- cycle ;
    \draw  [color={rgb, 255:red, 83; green, 171; blue, 16 }  ,draw opacity=1 ][fill={rgb, 255:red, 83; green, 171; blue, 16 }  ,fill opacity=1 ] (370.5,20) .. controls (370.5,18.62) and (371.62,17.5) .. (373,17.5) .. controls (374.38,17.5) and (375.5,18.62) .. (375.5,20) .. controls (375.5,21.38) and (374.38,22.5) .. (373,22.5) .. controls (371.62,22.5) and (370.5,21.38) .. (370.5,20) -- cycle ;
    \draw  [color={rgb, 255:red, 83; green, 171; blue, 16 }  ,draw opacity=1 ][fill={rgb, 255:red, 83; green, 171; blue, 16 }  ,fill opacity=1 ] (370.5,40) .. controls (370.5,38.62) and (371.62,37.5) .. (373,37.5) .. controls (374.38,37.5) and (375.5,38.62) .. (375.5,40) .. controls (375.5,41.38) and (374.38,42.5) .. (373,42.5) .. controls (371.62,42.5) and (370.5,41.38) .. (370.5,40) -- cycle ;
    \draw  [color={rgb, 255:red, 83; green, 171; blue, 16 }  ,draw opacity=1 ][fill={rgb, 255:red, 83; green, 171; blue, 16 }  ,fill opacity=1 ] (370.5,60) .. controls (370.5,58.62) and (371.62,57.5) .. (373,57.5) .. controls (374.38,57.5) and (375.5,58.62) .. (375.5,60) .. controls (375.5,61.38) and (374.38,62.5) .. (373,62.5) .. controls (371.62,62.5) and (370.5,61.38) .. (370.5,60) -- cycle ;
    \draw  [color={rgb, 255:red, 83; green, 171; blue, 16 }  ,draw opacity=1 ][fill={rgb, 255:red, 83; green, 171; blue, 16 }  ,fill opacity=1 ] (370.5,80) .. controls (370.5,78.62) and (371.62,77.5) .. (373,77.5) .. controls (374.38,77.5) and (375.5,78.62) .. (375.5,80) .. controls (375.5,81.38) and (374.38,82.5) .. (373,82.5) .. controls (371.62,82.5) and (370.5,81.38) .. (370.5,80) -- cycle ;
    \draw  [color={rgb, 255:red, 83; green, 171; blue, 16 }  ,draw opacity=1 ][fill={rgb, 255:red, 83; green, 171; blue, 16 }  ,fill opacity=1 ] (370.5,100) .. controls (370.5,98.62) and (371.62,97.5) .. (373,97.5) .. controls (374.38,97.5) and (375.5,98.62) .. (375.5,100) .. controls (375.5,101.38) and (374.38,102.5) .. (373,102.5) .. controls (371.62,102.5) and (370.5,101.38) .. (370.5,100) -- cycle ;
    \draw  [color={rgb, 255:red, 83; green, 171; blue, 16 }  ,draw opacity=1 ][fill={rgb, 255:red, 83; green, 171; blue, 16 }  ,fill opacity=1 ] (370.5,120) .. controls (370.5,118.62) and (371.62,117.5) .. (373,117.5) .. controls (374.38,117.5) and (375.5,118.62) .. (375.5,120) .. controls (375.5,121.38) and (374.38,122.5) .. (373,122.5) .. controls (371.62,122.5) and (370.5,121.38) .. (370.5,120) -- cycle ;
    \draw  [color={rgb, 255:red, 83; green, 171; blue, 16 }  ,draw opacity=1 ][fill={rgb, 255:red, 83; green, 171; blue, 16 }  ,fill opacity=1 ] (370.5,140) .. controls (370.5,138.62) and (371.62,137.5) .. (373,137.5) .. controls (374.38,137.5) and (375.5,138.62) .. (375.5,140) .. controls (375.5,141.38) and (374.38,142.5) .. (373,142.5) .. controls (371.62,142.5) and (370.5,141.38) .. (370.5,140) -- cycle ;
    \draw  [color={rgb, 255:red, 83; green, 171; blue, 16 }  ,draw opacity=1 ][fill={rgb, 255:red, 83; green, 171; blue, 16 }  ,fill opacity=1 ] (370.5,160) .. controls (370.5,158.62) and (371.62,157.5) .. (373,157.5) .. controls (374.38,157.5) and (375.5,158.62) .. (375.5,160) .. controls (375.5,161.38) and (374.38,162.5) .. (373,162.5) .. controls (371.62,162.5) and (370.5,161.38) .. (370.5,160) -- cycle ;
    \draw   (68.4,89.22) .. controls (68.4,62.51) and (90.05,40.85) .. (116.76,40.85) .. controls (143.47,40.85) and (165.13,62.51) .. (165.13,89.22) .. controls (165.13,115.93) and (143.47,137.58) .. (116.76,137.58) .. controls (90.05,137.58) and (68.4,115.93) .. (68.4,89.22) -- cycle ;
    \draw  [color={rgb, 255:red, 155; green, 155; blue, 155 }  ,draw opacity=0.4 ][fill={rgb, 255:red, 155; green, 155; blue, 155 }  ,fill opacity=0.4 ] (147,79) -- (167,79) -- (167,99) -- (147,99) -- cycle ;
    \draw  [color={rgb, 255:red, 155; green, 155; blue, 155 }  ,draw opacity=0.4 ][fill={rgb, 255:red, 155; green, 155; blue, 155 }  ,fill opacity=0.4 ] (87,39) -- (107,39) -- (107,59) -- (87,59) -- cycle ;
    \draw  [color={rgb, 255:red, 155; green, 155; blue, 155 }  ,draw opacity=0.4 ][fill={rgb, 255:red, 155; green, 155; blue, 155 }  ,fill opacity=0.4 ] (106.71,39.29) -- (126.71,39.29) -- (126.71,59.29) -- (106.71,59.29) -- cycle ;
    \draw  [color={rgb, 255:red, 155; green, 155; blue, 155 }  ,draw opacity=0.4 ][fill={rgb, 255:red, 155; green, 155; blue, 155 }  ,fill opacity=0.4 ] (87,119) -- (107,119) -- (107,139) -- (87,139) -- cycle ;
    \draw  [color={rgb, 255:red, 155; green, 155; blue, 155 }  ,draw opacity=0.4 ][fill={rgb, 255:red, 155; green, 155; blue, 155 }  ,fill opacity=0.4 ] (107,119) -- (127,119) -- (127,139) -- (107,139) -- cycle ;
    \draw  [color={rgb, 255:red, 155; green, 155; blue, 155 }  ,draw opacity=0.4 ][fill={rgb, 255:red, 155; green, 155; blue, 155 }  ,fill opacity=0.4 ] (127,119) -- (147,119) -- (147,139) -- (127,139) -- cycle ;
    \draw  [color={rgb, 255:red, 0; green, 0; blue, 0 }  ,draw opacity=1 ][fill={rgb, 255:red, 74; green, 144; blue, 226 }  ,fill opacity=0.37 ] (47,19) -- (67,19) -- (67,39) -- (47,39) -- cycle ;
    \draw  [color={rgb, 255:red, 0; green, 0; blue, 0 }  ,draw opacity=1 ][fill={rgb, 255:red, 74; green, 144; blue, 226 }  ,fill opacity=0.37 ] (67,19) -- (87,19) -- (87,39) -- (67,39) -- cycle ;
    \draw  [color={rgb, 255:red, 0; green, 0; blue, 0 }  ,draw opacity=1 ][fill={rgb, 255:red, 74; green, 144; blue, 226 }  ,fill opacity=0.37 ] (87,19) -- (107,19) -- (107,39) -- (87,39) -- cycle ;
    \draw  [color={rgb, 255:red, 0; green, 0; blue, 0 }  ,draw opacity=1 ][fill={rgb, 255:red, 74; green, 144; blue, 226 }  ,fill opacity=0.37 ] (107,19) -- (127,19) -- (127,39) -- (107,39) -- cycle ;
    \draw  [color={rgb, 255:red, 0; green, 0; blue, 0 }  ,draw opacity=1 ][fill={rgb, 255:red, 74; green, 144; blue, 226 }  ,fill opacity=0.37 ] (126.71,19.29) -- (146.71,19.29) -- (146.71,39.29) -- (126.71,39.29) -- cycle ;
    \draw  [color={rgb, 255:red, 0; green, 0; blue, 0 }  ,draw opacity=1 ][fill={rgb, 255:red, 74; green, 144; blue, 226 }  ,fill opacity=0.37 ] (166.71,59.29) -- (186.71,59.29) -- (186.71,79.29) -- (166.71,79.29) -- cycle ;
    \draw  [color={rgb, 255:red, 0; green, 0; blue, 0 }  ,draw opacity=1 ][fill={rgb, 255:red, 74; green, 144; blue, 226 }  ,fill opacity=0.37 ] (146.71,19.29) -- (166.71,19.29) -- (166.71,39.29) -- (146.71,39.29) -- cycle ;
    \draw  [color={rgb, 255:red, 0; green, 0; blue, 0 }  ,draw opacity=1 ][fill={rgb, 255:red, 74; green, 144; blue, 226 }  ,fill opacity=0.37 ] (166.71,39.29) -- (186.71,39.29) -- (186.71,59.29) -- (166.71,59.29) -- cycle ;
    \draw  [color={rgb, 255:red, 0; green, 0; blue, 0 }  ,draw opacity=1 ][fill={rgb, 255:red, 74; green, 144; blue, 226 }  ,fill opacity=0.37 ] (166.71,19.29) -- (186.71,19.29) -- (186.71,39.29) -- (166.71,39.29) -- cycle ;
    \draw  [color={rgb, 255:red, 0; green, 0; blue, 0 }  ,draw opacity=1 ][fill={rgb, 255:red, 74; green, 144; blue, 226 }  ,fill opacity=0.37 ] (47,39) -- (67,39) -- (67,59) -- (47,59) -- cycle ;
    \draw  [color={rgb, 255:red, 0; green, 0; blue, 0 }  ,draw opacity=1 ][fill={rgb, 255:red, 74; green, 144; blue, 226 }  ,fill opacity=0.37 ] (47,59) -- (67,59) -- (67,79) -- (47,79) -- cycle ;
    \draw  [color={rgb, 255:red, 0; green, 0; blue, 0 }  ,draw opacity=1 ][fill={rgb, 255:red, 74; green, 144; blue, 226 }  ,fill opacity=0.37 ] (47,79) -- (67,79) -- (67,99) -- (47,99) -- cycle ;
    \draw  [color={rgb, 255:red, 0; green, 0; blue, 0 }  ,draw opacity=1 ][fill={rgb, 255:red, 74; green, 144; blue, 226 }  ,fill opacity=0.37 ] (47,99) -- (67,99) -- (67,119) -- (47,119) -- cycle ;
    \draw  [color={rgb, 255:red, 0; green, 0; blue, 0 }  ,draw opacity=1 ][fill={rgb, 255:red, 74; green, 144; blue, 226 }  ,fill opacity=0.37 ] (47,119) -- (67,119) -- (67,139) -- (47,139) -- cycle ;
    \draw  [color={rgb, 255:red, 0; green, 0; blue, 0 }  ,draw opacity=1 ][fill={rgb, 255:red, 74; green, 144; blue, 226 }  ,fill opacity=0.37 ] (67,139) -- (87,139) -- (87,159) -- (67,159) -- cycle ;
    \draw  [color={rgb, 255:red, 0; green, 0; blue, 0 }  ,draw opacity=1 ][fill={rgb, 255:red, 74; green, 144; blue, 226 }  ,fill opacity=0.37 ] (47,139) -- (67,139) -- (67,159) -- (47,159) -- cycle ;
    \draw  [color={rgb, 255:red, 0; green, 0; blue, 0 }  ,draw opacity=1 ][fill={rgb, 255:red, 74; green, 144; blue, 226 }  ,fill opacity=0.37 ] (87,139) -- (107,139) -- (107,159) -- (87,159) -- cycle ;
    \draw  [color={rgb, 255:red, 0; green, 0; blue, 0 }  ,draw opacity=1 ][fill={rgb, 255:red, 74; green, 144; blue, 226 }  ,fill opacity=0.37 ] (107,139) -- (127,139) -- (127,159) -- (107,159) -- cycle ;
    \draw  [color={rgb, 255:red, 0; green, 0; blue, 0 }  ,draw opacity=1 ][fill={rgb, 255:red, 74; green, 144; blue, 226 }  ,fill opacity=0.37 ] (127,139) -- (147,139) -- (147,159) -- (127,159) -- cycle ;
    \draw  [color={rgb, 255:red, 0; green, 0; blue, 0 }  ,draw opacity=1 ][fill={rgb, 255:red, 74; green, 144; blue, 226 }  ,fill opacity=0.37 ] (147,139) -- (167,139) -- (167,159) -- (147,159) -- cycle ;
    \draw  [color={rgb, 255:red, 0; green, 0; blue, 0 }  ,draw opacity=1 ][fill={rgb, 255:red, 74; green, 144; blue, 226 }  ,fill opacity=0.37 ] (167,139) -- (187,139) -- (187,159) -- (167,159) -- cycle ;
    \draw  [color={rgb, 255:red, 0; green, 0; blue, 0 }  ,draw opacity=1 ][fill={rgb, 255:red, 74; green, 144; blue, 226 }  ,fill opacity=0.37 ] (167,119) -- (187,119) -- (187,139) -- (167,139) -- cycle ;
    \draw  [color={rgb, 255:red, 0; green, 0; blue, 0 }  ,draw opacity=1 ][fill={rgb, 255:red, 74; green, 144; blue, 226 }  ,fill opacity=0.37 ] (167,99) -- (187,99) -- (187,119) -- (167,119) -- cycle ;
    \draw  [color={rgb, 255:red, 0; green, 0; blue, 0 }  ,draw opacity=1 ][fill={rgb, 255:red, 74; green, 144; blue, 226 }  ,fill opacity=0.37 ] (166.71,79.29) -- (186.71,79.29) -- (186.71,99.29) -- (166.71,99.29) -- cycle ;
    \draw   (394.54,90.08) .. controls (394.54,63.3) and (416.26,41.58) .. (443.04,41.58) .. controls (469.83,41.58) and (491.54,63.3) .. (491.54,90.08) .. controls (491.54,116.87) and (469.83,138.58) .. (443.04,138.58) .. controls (416.26,138.58) and (394.54,116.87) .. (394.54,90.08) -- cycle ;
    \draw  [color={rgb, 255:red, 83; green, 171; blue, 16 }  ,draw opacity=1 ][fill={rgb, 255:red, 83; green, 171; blue, 16 }  ,fill opacity=1 ] (390.5,40) .. controls (390.5,38.62) and (391.62,37.5) .. (393,37.5) .. controls (394.38,37.5) and (395.5,38.62) .. (395.5,40) .. controls (395.5,41.38) and (394.38,42.5) .. (393,42.5) .. controls (391.62,42.5) and (390.5,41.38) .. (390.5,40) -- cycle ;
    \draw  [color={rgb, 255:red, 83; green, 171; blue, 16 }  ,draw opacity=1 ][fill={rgb, 255:red, 83; green, 171; blue, 16 }  ,fill opacity=1 ] (410.5,60) .. controls (410.5,58.62) and (411.62,57.5) .. (413,57.5) .. controls (414.38,57.5) and (415.5,58.62) .. (415.5,60) .. controls (415.5,61.38) and (414.38,62.5) .. (413,62.5) .. controls (411.62,62.5) and (410.5,61.38) .. (410.5,60) -- cycle ;
    \draw  [color={rgb, 255:red, 83; green, 171; blue, 16 }  ,draw opacity=1 ][fill={rgb, 255:red, 83; green, 171; blue, 16 }  ,fill opacity=1 ] (390.5,60) .. controls (390.5,58.62) and (391.62,57.5) .. (393,57.5) .. controls (394.38,57.5) and (395.5,58.62) .. (395.5,60) .. controls (395.5,61.38) and (394.38,62.5) .. (393,62.5) .. controls (391.62,62.5) and (390.5,61.38) .. (390.5,60) -- cycle ;
    \draw  [color={rgb, 255:red, 83; green, 171; blue, 16 }  ,draw opacity=1 ][fill={rgb, 255:red, 83; green, 171; blue, 16 }  ,fill opacity=1 ] (390.5,80) .. controls (390.5,78.62) and (391.62,77.5) .. (393,77.5) .. controls (394.38,77.5) and (395.5,78.62) .. (395.5,80) .. controls (395.5,81.38) and (394.38,82.5) .. (393,82.5) .. controls (391.62,82.5) and (390.5,81.38) .. (390.5,80) -- cycle ;
    \draw  [color={rgb, 255:red, 83; green, 171; blue, 16 }  ,draw opacity=1 ][fill={rgb, 255:red, 83; green, 171; blue, 16 }  ,fill opacity=1 ] (390.5,100) .. controls (390.5,98.62) and (391.62,97.5) .. (393,97.5) .. controls (394.38,97.5) and (395.5,98.62) .. (395.5,100) .. controls (395.5,101.38) and (394.38,102.5) .. (393,102.5) .. controls (391.62,102.5) and (390.5,101.38) .. (390.5,100) -- cycle ;
    \draw  [color={rgb, 255:red, 83; green, 171; blue, 16 }  ,draw opacity=1 ][fill={rgb, 255:red, 83; green, 171; blue, 16 }  ,fill opacity=1 ] (410.5,120) .. controls (410.5,118.62) and (411.62,117.5) .. (413,117.5) .. controls (414.38,117.5) and (415.5,118.62) .. (415.5,120) .. controls (415.5,121.38) and (414.38,122.5) .. (413,122.5) .. controls (411.62,122.5) and (410.5,121.38) .. (410.5,120) -- cycle ;
    \draw  [color={rgb, 255:red, 83; green, 171; blue, 16 }  ,draw opacity=1 ][fill={rgb, 255:red, 83; green, 171; blue, 16 }  ,fill opacity=1 ] (430.5,140) .. controls (430.5,138.62) and (431.62,137.5) .. (433,137.5) .. controls (434.38,137.5) and (435.5,138.62) .. (435.5,140) .. controls (435.5,141.38) and (434.38,142.5) .. (433,142.5) .. controls (431.62,142.5) and (430.5,141.38) .. (430.5,140) -- cycle ;
    \draw  [color={rgb, 255:red, 83; green, 171; blue, 16 }  ,draw opacity=1 ][fill={rgb, 255:red, 83; green, 171; blue, 16 }  ,fill opacity=1 ] (450.5,160) .. controls (450.5,158.62) and (451.62,157.5) .. (453,157.5) .. controls (454.38,157.5) and (455.5,158.62) .. (455.5,160) .. controls (455.5,161.38) and (454.38,162.5) .. (453,162.5) .. controls (451.62,162.5) and (450.5,161.38) .. (450.5,160) -- cycle ;
    \draw  [color={rgb, 255:red, 83; green, 171; blue, 16 }  ,draw opacity=1 ][fill={rgb, 255:red, 83; green, 171; blue, 16 }  ,fill opacity=1 ] (390.5,140) .. controls (390.5,138.62) and (391.62,137.5) .. (393,137.5) .. controls (394.38,137.5) and (395.5,138.62) .. (395.5,140) .. controls (395.5,141.38) and (394.38,142.5) .. (393,142.5) .. controls (391.62,142.5) and (390.5,141.38) .. (390.5,140) -- cycle ;
    \draw  [color={rgb, 255:red, 83; green, 171; blue, 16 }  ,draw opacity=1 ][fill={rgb, 255:red, 83; green, 171; blue, 16 }  ,fill opacity=1 ] (410.5,160) .. controls (410.5,158.62) and (411.62,157.5) .. (413,157.5) .. controls (414.38,157.5) and (415.5,158.62) .. (415.5,160) .. controls (415.5,161.38) and (414.38,162.5) .. (413,162.5) .. controls (411.62,162.5) and (410.5,161.38) .. (410.5,160) -- cycle ;
    \draw  [color={rgb, 255:red, 83; green, 171; blue, 16 }  ,draw opacity=1 ][fill={rgb, 255:red, 83; green, 171; blue, 16 }  ,fill opacity=1 ] (390.5,120) .. controls (390.5,118.62) and (391.62,117.5) .. (393,117.5) .. controls (394.38,117.5) and (395.5,118.62) .. (395.5,120) .. controls (395.5,121.38) and (394.38,122.5) .. (393,122.5) .. controls (391.62,122.5) and (390.5,121.38) .. (390.5,120) -- cycle ;
    \draw  [color={rgb, 255:red, 83; green, 171; blue, 16 }  ,draw opacity=1 ][fill={rgb, 255:red, 83; green, 171; blue, 16 }  ,fill opacity=1 ] (410.5,140) .. controls (410.5,138.62) and (411.62,137.5) .. (413,137.5) .. controls (414.38,137.5) and (415.5,138.62) .. (415.5,140) .. controls (415.5,141.38) and (414.38,142.5) .. (413,142.5) .. controls (411.62,142.5) and (410.5,141.38) .. (410.5,140) -- cycle ;
    \draw  [color={rgb, 255:red, 83; green, 171; blue, 16 }  ,draw opacity=1 ][fill={rgb, 255:red, 83; green, 171; blue, 16 }  ,fill opacity=1 ] (430.5,160) .. controls (430.5,158.62) and (431.62,157.5) .. (433,157.5) .. controls (434.38,157.5) and (435.5,158.62) .. (435.5,160) .. controls (435.5,161.38) and (434.38,162.5) .. (433,162.5) .. controls (431.62,162.5) and (430.5,161.38) .. (430.5,160) -- cycle ;
    \draw  [color={rgb, 255:red, 83; green, 171; blue, 16 }  ,draw opacity=1 ][fill={rgb, 255:red, 83; green, 171; blue, 16 }  ,fill opacity=1 ] (390.5,160) .. controls (390.5,158.62) and (391.62,157.5) .. (393,157.5) .. controls (394.38,157.5) and (395.5,158.62) .. (395.5,160) .. controls (395.5,161.38) and (394.38,162.5) .. (393,162.5) .. controls (391.62,162.5) and (390.5,161.38) .. (390.5,160) -- cycle ;
    \draw  [color={rgb, 255:red, 83; green, 171; blue, 16 }  ,draw opacity=1 ][fill={rgb, 255:red, 83; green, 171; blue, 16 }  ,fill opacity=1 ] (390.5,20) .. controls (390.5,18.62) and (391.62,17.5) .. (393,17.5) .. controls (394.38,17.5) and (395.5,18.62) .. (395.5,20) .. controls (395.5,21.38) and (394.38,22.5) .. (393,22.5) .. controls (391.62,22.5) and (390.5,21.38) .. (390.5,20) -- cycle ;
    \draw  [color={rgb, 255:red, 83; green, 171; blue, 16 }  ,draw opacity=1 ][fill={rgb, 255:red, 83; green, 171; blue, 16 }  ,fill opacity=1 ] (410.5,40) .. controls (410.5,38.62) and (411.62,37.5) .. (413,37.5) .. controls (414.38,37.5) and (415.5,38.62) .. (415.5,40) .. controls (415.5,41.38) and (414.38,42.5) .. (413,42.5) .. controls (411.62,42.5) and (410.5,41.38) .. (410.5,40) -- cycle ;
    \draw  [color={rgb, 255:red, 83; green, 171; blue, 16 }  ,draw opacity=1 ][fill={rgb, 255:red, 83; green, 171; blue, 16 }  ,fill opacity=1 ] (410.5,20) .. controls (410.5,18.62) and (411.62,17.5) .. (413,17.5) .. controls (414.38,17.5) and (415.5,18.62) .. (415.5,20) .. controls (415.5,21.38) and (414.38,22.5) .. (413,22.5) .. controls (411.62,22.5) and (410.5,21.38) .. (410.5,20) -- cycle ;
    \draw  [color={rgb, 255:red, 83; green, 171; blue, 16 }  ,draw opacity=1 ][fill={rgb, 255:red, 83; green, 171; blue, 16 }  ,fill opacity=1 ] (430.5,40) .. controls (430.5,38.62) and (431.62,37.5) .. (433,37.5) .. controls (434.38,37.5) and (435.5,38.62) .. (435.5,40) .. controls (435.5,41.38) and (434.38,42.5) .. (433,42.5) .. controls (431.62,42.5) and (430.5,41.38) .. (430.5,40) -- cycle ;
    \draw  [color={rgb, 255:red, 83; green, 171; blue, 16 }  ,draw opacity=1 ][fill={rgb, 255:red, 83; green, 171; blue, 16 }  ,fill opacity=1 ] (430.5,20) .. controls (430.5,18.62) and (431.62,17.5) .. (433,17.5) .. controls (434.38,17.5) and (435.5,18.62) .. (435.5,20) .. controls (435.5,21.38) and (434.38,22.5) .. (433,22.5) .. controls (431.62,22.5) and (430.5,21.38) .. (430.5,20) -- cycle ;
    \draw  [color={rgb, 255:red, 83; green, 171; blue, 16 }  ,draw opacity=1 ][fill={rgb, 255:red, 83; green, 171; blue, 16 }  ,fill opacity=1 ] (450.5,40) .. controls (450.5,38.62) and (451.62,37.5) .. (453,37.5) .. controls (454.38,37.5) and (455.5,38.62) .. (455.5,40) .. controls (455.5,41.38) and (454.38,42.5) .. (453,42.5) .. controls (451.62,42.5) and (450.5,41.38) .. (450.5,40) -- cycle ;
    \draw  [color={rgb, 255:red, 83; green, 171; blue, 16 }  ,draw opacity=1 ][fill={rgb, 255:red, 83; green, 171; blue, 16 }  ,fill opacity=1 ] (470.5,60) .. controls (470.5,58.62) and (471.62,57.5) .. (473,57.5) .. controls (474.38,57.5) and (475.5,58.62) .. (475.5,60) .. controls (475.5,61.38) and (474.38,62.5) .. (473,62.5) .. controls (471.62,62.5) and (470.5,61.38) .. (470.5,60) -- cycle ;
    \draw  [color={rgb, 255:red, 83; green, 171; blue, 16 }  ,draw opacity=1 ][fill={rgb, 255:red, 83; green, 171; blue, 16 }  ,fill opacity=1 ] (490.5,80) .. controls (490.5,78.62) and (491.62,77.5) .. (493,77.5) .. controls (494.38,77.5) and (495.5,78.62) .. (495.5,80) .. controls (495.5,81.38) and (494.38,82.5) .. (493,82.5) .. controls (491.62,82.5) and (490.5,81.38) .. (490.5,80) -- cycle ;
    \draw  [color={rgb, 255:red, 83; green, 171; blue, 16 }  ,draw opacity=1 ][fill={rgb, 255:red, 83; green, 171; blue, 16 }  ,fill opacity=1 ] (450.5,20) .. controls (450.5,18.62) and (451.62,17.5) .. (453,17.5) .. controls (454.38,17.5) and (455.5,18.62) .. (455.5,20) .. controls (455.5,21.38) and (454.38,22.5) .. (453,22.5) .. controls (451.62,22.5) and (450.5,21.38) .. (450.5,20) -- cycle ;
    \draw  [color={rgb, 255:red, 83; green, 171; blue, 16 }  ,draw opacity=1 ][fill={rgb, 255:red, 83; green, 171; blue, 16 }  ,fill opacity=1 ] (470.5,40) .. controls (470.5,38.62) and (471.62,37.5) .. (473,37.5) .. controls (474.38,37.5) and (475.5,38.62) .. (475.5,40) .. controls (475.5,41.38) and (474.38,42.5) .. (473,42.5) .. controls (471.62,42.5) and (470.5,41.38) .. (470.5,40) -- cycle ;
    \draw  [color={rgb, 255:red, 83; green, 171; blue, 16 }  ,draw opacity=1 ][fill={rgb, 255:red, 83; green, 171; blue, 16 }  ,fill opacity=1 ] (490.5,60) .. controls (490.5,58.62) and (491.62,57.5) .. (493,57.5) .. controls (494.38,57.5) and (495.5,58.62) .. (495.5,60) .. controls (495.5,61.38) and (494.38,62.5) .. (493,62.5) .. controls (491.62,62.5) and (490.5,61.38) .. (490.5,60) -- cycle ;
    \draw  [color={rgb, 255:red, 83; green, 171; blue, 16 }  ,draw opacity=1 ][fill={rgb, 255:red, 83; green, 171; blue, 16 }  ,fill opacity=1 ] (470.5,20) .. controls (470.5,18.62) and (471.62,17.5) .. (473,17.5) .. controls (474.38,17.5) and (475.5,18.62) .. (475.5,20) .. controls (475.5,21.38) and (474.38,22.5) .. (473,22.5) .. controls (471.62,22.5) and (470.5,21.38) .. (470.5,20) -- cycle ;
    \draw  [color={rgb, 255:red, 83; green, 171; blue, 16 }  ,draw opacity=1 ][fill={rgb, 255:red, 83; green, 171; blue, 16 }  ,fill opacity=1 ] (490.5,40) .. controls (490.5,38.62) and (491.62,37.5) .. (493,37.5) .. controls (494.38,37.5) and (495.5,38.62) .. (495.5,40) .. controls (495.5,41.38) and (494.38,42.5) .. (493,42.5) .. controls (491.62,42.5) and (490.5,41.38) .. (490.5,40) -- cycle ;
    \draw  [color={rgb, 255:red, 83; green, 171; blue, 16 }  ,draw opacity=1 ][fill={rgb, 255:red, 83; green, 171; blue, 16 }  ,fill opacity=1 ] (450.5,140) .. controls (450.5,138.62) and (451.62,137.5) .. (453,137.5) .. controls (454.38,137.5) and (455.5,138.62) .. (455.5,140) .. controls (455.5,141.38) and (454.38,142.5) .. (453,142.5) .. controls (451.62,142.5) and (450.5,141.38) .. (450.5,140) -- cycle ;
    \draw  [color={rgb, 255:red, 83; green, 171; blue, 16 }  ,draw opacity=1 ][fill={rgb, 255:red, 83; green, 171; blue, 16 }  ,fill opacity=1 ] (470.5,160) .. controls (470.5,158.62) and (471.62,157.5) .. (473,157.5) .. controls (474.38,157.5) and (475.5,158.62) .. (475.5,160) .. controls (475.5,161.38) and (474.38,162.5) .. (473,162.5) .. controls (471.62,162.5) and (470.5,161.38) .. (470.5,160) -- cycle ;
    \draw  [color={rgb, 255:red, 83; green, 171; blue, 16 }  ,draw opacity=1 ][fill={rgb, 255:red, 83; green, 171; blue, 16 }  ,fill opacity=1 ] (470.5,120) .. controls (470.5,118.62) and (471.62,117.5) .. (473,117.5) .. controls (474.38,117.5) and (475.5,118.62) .. (475.5,120) .. controls (475.5,121.38) and (474.38,122.5) .. (473,122.5) .. controls (471.62,122.5) and (470.5,121.38) .. (470.5,120) -- cycle ;
    \draw  [color={rgb, 255:red, 83; green, 171; blue, 16 }  ,draw opacity=1 ][fill={rgb, 255:red, 83; green, 171; blue, 16 }  ,fill opacity=1 ] (490.5,140) .. controls (490.5,138.62) and (491.62,137.5) .. (493,137.5) .. controls (494.38,137.5) and (495.5,138.62) .. (495.5,140) .. controls (495.5,141.38) and (494.38,142.5) .. (493,142.5) .. controls (491.62,142.5) and (490.5,141.38) .. (490.5,140) -- cycle ;
    \draw  [color={rgb, 255:red, 83; green, 171; blue, 16 }  ,draw opacity=1 ][fill={rgb, 255:red, 83; green, 171; blue, 16 }  ,fill opacity=1 ] (470.5,140) .. controls (470.5,138.62) and (471.62,137.5) .. (473,137.5) .. controls (474.38,137.5) and (475.5,138.62) .. (475.5,140) .. controls (475.5,141.38) and (474.38,142.5) .. (473,142.5) .. controls (471.62,142.5) and (470.5,141.38) .. (470.5,140) -- cycle ;
    \draw  [color={rgb, 255:red, 83; green, 171; blue, 16 }  ,draw opacity=1 ][fill={rgb, 255:red, 83; green, 171; blue, 16 }  ,fill opacity=1 ] (490.5,160) .. controls (490.5,158.62) and (491.62,157.5) .. (493,157.5) .. controls (494.38,157.5) and (495.5,158.62) .. (495.5,160) .. controls (495.5,161.38) and (494.38,162.5) .. (493,162.5) .. controls (491.62,162.5) and (490.5,161.38) .. (490.5,160) -- cycle ;
    \draw  [color={rgb, 255:red, 83; green, 171; blue, 16 }  ,draw opacity=1 ][fill={rgb, 255:red, 83; green, 171; blue, 16 }  ,fill opacity=1 ] (490.5,120) .. controls (490.5,118.62) and (491.62,117.5) .. (493,117.5) .. controls (494.38,117.5) and (495.5,118.62) .. (495.5,120) .. controls (495.5,121.38) and (494.38,122.5) .. (493,122.5) .. controls (491.62,122.5) and (490.5,121.38) .. (490.5,120) -- cycle ;
    \draw  [color={rgb, 255:red, 83; green, 171; blue, 16 }  ,draw opacity=1 ][fill={rgb, 255:red, 83; green, 171; blue, 16 }  ,fill opacity=1 ] (490.5,100) .. controls (490.5,98.62) and (491.62,97.5) .. (493,97.5) .. controls (494.38,97.5) and (495.5,98.62) .. (495.5,100) .. controls (495.5,101.38) and (494.38,102.5) .. (493,102.5) .. controls (491.62,102.5) and (490.5,101.38) .. (490.5,100) -- cycle ;
    \draw  [color={rgb, 255:red, 83; green, 171; blue, 16 }  ,draw opacity=1 ][fill={rgb, 255:red, 83; green, 171; blue, 16 }  ,fill opacity=1 ] (430.5,60) .. controls (430.5,58.62) and (431.62,57.5) .. (433,57.5) .. controls (434.38,57.5) and (435.5,58.62) .. (435.5,60) .. controls (435.5,61.38) and (434.38,62.5) .. (433,62.5) .. controls (431.62,62.5) and (430.5,61.38) .. (430.5,60) -- cycle ;
    \draw  [color={rgb, 255:red, 83; green, 171; blue, 16 }  ,draw opacity=1 ][fill={rgb, 255:red, 83; green, 171; blue, 16 }  ,fill opacity=1 ] (450.5,60) .. controls (450.5,58.62) and (451.62,57.5) .. (453,57.5) .. controls (454.38,57.5) and (455.5,58.62) .. (455.5,60) .. controls (455.5,61.38) and (454.38,62.5) .. (453,62.5) .. controls (451.62,62.5) and (450.5,61.38) .. (450.5,60) -- cycle ;
    \draw  [color={rgb, 255:red, 83; green, 171; blue, 16 }  ,draw opacity=1 ][fill={rgb, 255:red, 83; green, 171; blue, 16 }  ,fill opacity=1 ] (470.5,80) .. controls (470.5,78.62) and (471.62,77.5) .. (473,77.5) .. controls (474.38,77.5) and (475.5,78.62) .. (475.5,80) .. controls (475.5,81.38) and (474.38,82.5) .. (473,82.5) .. controls (471.62,82.5) and (470.5,81.38) .. (470.5,80) -- cycle ;
    \draw  [color={rgb, 255:red, 83; green, 171; blue, 16 }  ,draw opacity=1 ][fill={rgb, 255:red, 83; green, 171; blue, 16 }  ,fill opacity=1 ] (410.5,80) .. controls (410.5,78.62) and (411.62,77.5) .. (413,77.5) .. controls (414.38,77.5) and (415.5,78.62) .. (415.5,80) .. controls (415.5,81.38) and (414.38,82.5) .. (413,82.5) .. controls (411.62,82.5) and (410.5,81.38) .. (410.5,80) -- cycle ;
    \draw  [color={rgb, 255:red, 83; green, 171; blue, 16 }  ,draw opacity=1 ][fill={rgb, 255:red, 83; green, 171; blue, 16 }  ,fill opacity=1 ] (410.5,100) .. controls (410.5,98.62) and (411.62,97.5) .. (413,97.5) .. controls (414.38,97.5) and (415.5,98.62) .. (415.5,100) .. controls (415.5,101.38) and (414.38,102.5) .. (413,102.5) .. controls (411.62,102.5) and (410.5,101.38) .. (410.5,100) -- cycle ;
    \draw  [color={rgb, 255:red, 83; green, 171; blue, 16 }  ,draw opacity=1 ][fill={rgb, 255:red, 83; green, 171; blue, 16 }  ,fill opacity=1 ] (430.5,120) .. controls (430.5,118.62) and (431.62,117.5) .. (433,117.5) .. controls (434.38,117.5) and (435.5,118.62) .. (435.5,120) .. controls (435.5,121.38) and (434.38,122.5) .. (433,122.5) .. controls (431.62,122.5) and (430.5,121.38) .. (430.5,120) -- cycle ;
    \draw  [color={rgb, 255:red, 83; green, 171; blue, 16 }  ,draw opacity=1 ][fill={rgb, 255:red, 83; green, 171; blue, 16 }  ,fill opacity=1 ] (450.5,120) .. controls (450.5,118.62) and (451.62,117.5) .. (453,117.5) .. controls (454.38,117.5) and (455.5,118.62) .. (455.5,120) .. controls (455.5,121.38) and (454.38,122.5) .. (453,122.5) .. controls (451.62,122.5) and (450.5,121.38) .. (450.5,120) -- cycle ;
    \draw  [color={rgb, 255:red, 83; green, 171; blue, 16 }  ,draw opacity=1 ][fill={rgb, 255:red, 83; green, 171; blue, 16 }  ,fill opacity=1 ] (470.5,100) .. controls (470.5,98.62) and (471.62,97.5) .. (473,97.5) .. controls (474.38,97.5) and (475.5,98.62) .. (475.5,100) .. controls (475.5,101.38) and (474.38,102.5) .. (473,102.5) .. controls (471.62,102.5) and (470.5,101.38) .. (470.5,100) -- cycle ;
    \draw  [color={rgb, 255:red, 155; green, 155; blue, 155 }  ,draw opacity=0.4 ][fill={rgb, 255:red, 155; green, 155; blue, 155 }  ,fill opacity=0.4 ] (147,119) -- (167,119) -- (167,139) -- (147,139) -- cycle ;
    \draw  [color={rgb, 255:red, 155; green, 155; blue, 155 }  ,draw opacity=0.4 ][fill={rgb, 255:red, 155; green, 155; blue, 155 }  ,fill opacity=0.4 ] (67,119) -- (87,119) -- (87,139) -- (67,139) -- cycle ;
    \draw  [color={rgb, 255:red, 155; green, 155; blue, 155 }  ,draw opacity=0.4 ][fill={rgb, 255:red, 155; green, 155; blue, 155 }  ,fill opacity=0.4 ] (67,39) -- (87,39) -- (87,59) -- (67,59) -- cycle ;
    \draw  [color={rgb, 255:red, 155; green, 155; blue, 155 }  ,draw opacity=0.4 ][fill={rgb, 255:red, 155; green, 155; blue, 155 }  ,fill opacity=0.4 ] (146.71,39.29) -- (166.71,39.29) -- (166.71,59.29) -- (146.71,59.29) -- cycle ;
        
        \draw (373,20) node[cross,blue] {};
        \draw (393,20) node[cross,blue] {};
        \draw (413,20) node[cross,blue] {};
        \draw (433,20) node[cross,blue] {};
        \draw (453,20) node[cross,blue] {};
        \draw (473,20) node[cross,blue] {};
        \draw (493,20) node[cross,blue] {};
        \draw (513,20) node[cross,blue] {};
        \draw (373,40) node[cross,blue] {};
        \draw (393,40) node[cross,blue] {};
        \draw (413,40) node[cross,blue] {};
        \draw (433,40) node[cross,blue] {};
        \draw (453,40) node[cross,blue] {};
        \draw (473,40) node[cross,blue] {};
        \draw (493,40) node[cross,blue] {};
        \draw (513,40) node[cross,blue] {};
        \draw (373,60) node[cross,blue] {};
        \draw (393,60) node[cross,blue] {};
        \draw (493,60) node[cross,blue] {}; 
        \draw (513,60) node[cross,blue] {}; 
        \draw (373,80) node[cross,blue] {};
        \draw (393,80) node[cross,blue] {};
        \draw (493,80) node[cross,blue] {}; 
        \draw (513,80) node[cross,blue] {}; 
        \draw (373,100) node[cross,blue] {};
        \draw (393,100) node[cross,blue] {};
        \draw (493,100) node[cross,blue] {}; 
        \draw (513,100) node[cross,blue] {}; 
        \draw (373,120) node[cross,blue] {};
        \draw (393,120) node[cross,blue] {};
        \draw (493,120) node[cross,blue] {}; 
        \draw (513,120) node[cross,blue] {}; 
        \draw (373,140) node[cross,blue] {};
        \draw (393,140) node[cross,blue] {};
        \draw (413,140) node[cross,blue] {};
        \draw (433,140) node[cross,blue] {};
        \draw (453,140) node[cross,blue] {};
        \draw (473,140) node[cross,blue] {};
        \draw (493,140) node[cross,blue] {};
        \draw (513,140) node[cross,blue] {};
        \draw (373,160) node[cross,blue] {};
        \draw (393,160) node[cross,blue] {};
        \draw (413,160) node[cross,blue] {};
        \draw (433,160) node[cross,blue] {};
        \draw (453,160) node[cross,blue] {};
        \draw (473,160) node[cross,blue] {};
        \draw (493,160) node[cross,blue] {};
        \draw (513,160) node[cross,blue] {};

        \draw (541,60) node[cross,blue] {};
        
        \draw  [color={rgb, 255:red, 208; green, 2; blue, 27 }  ,draw opacity=1 ][fill={rgb, 255:red, 208; green, 2; blue, 27 }  ,fill opacity=1 ] (538.42,91.08) .. controls (538.42,89.7) and (539.54,88.58) .. (540.92,88.58) .. controls (542.3,88.58) and (543.42,89.7) .. (543.42,91.08) .. controls (543.42,92.46) and (542.3,93.58) .. (540.92,93.58) .. controls (539.54,93.58) and (538.42,92.46) .. (538.42,91.08) -- cycle ;
        \draw  [color={rgb, 255:red, 83; green, 171; blue, 16 }  ,draw opacity=1 ][fill={rgb, 255:red, 83; green, 171; blue, 16 }  ,fill opacity=1 ] (538.42,121.08) .. controls (538.42,119.7) and (539.54,118.58) .. (540.92,118.58) .. controls (542.3,118.58) and (543.42,119.7) .. (543.42,121.08) .. controls (543.42,122.46) and (542.3,123.58) .. (540.92,123.58) .. controls (539.54,123.58) and (538.42,122.46) .. (538.42,121.08) -- cycle ;

        \draw (245,52) node [anchor=north west][inner sep=0.75pt]  [font=\small] [align=left] {internal cell};
        \draw (245,83) node [anchor=north west][inner sep=0.75pt]  [font=\small] [align=left] {cut cell};
        \draw (245,115) node [anchor=north west][inner sep=0.75pt]  [font=\small] [align=left] {external cell};
        \draw (556,54) node [anchor=north west][inner sep=0.75pt]  [font=\small] [align=left] {internal node};
        \draw (556,83) node [anchor=north west][inner sep=0.75pt]  [font=\small] [align=left] {external node};
        \draw (556,115) node [anchor=north west][inner sep=0.75pt]  [font=\small] [align=left] {aggregated node};
    \end{tikzpicture}
    \subfloat[\label{fig: embedded cells} Classification of embedded cells.]{\hspace{.5\linewidth}}
    \subfloat[\label{fig: embedded nodes} Classification of embedded nodes.]{\hspace{.5\linewidth}}
    \caption{Cells and nodes in the aggregated cell method in the case of a circular hole cut from a background grid.}
    \label{fig: embedded setup}
\end{figure}
In this section, we assume that our problem is stated in $H^1(\Omega)$ and we use grad-conforming \ac{fe} spaces with full $\mathcal{C}^0$ continuity, whose basis functions can be constructed using Lagrangian bases (see, e.g., \cite{Ern2021}). With the classification of cells proposed above, we can now define the \ac{fe} spaces used in the aggregated cell method. First, we define a standard background \ac{fe} space over the background grid
\begin{equation}
    \widehat{\mathcal{V}}_h \doteq \{v_h \in \mathcal{C}^0(\Omega) \ : \ v_h|_K \in \mathcal{Q}_p(K) \ \forall \ K \in \widehat{\Omega}_h\},
\end{equation}
where $\mathcal{Q}_p(K)$ the space of polynomials of degree at most $p$ in each coordinate. Similarly, we can define the interior and active \ac{fe} spaces  $\mathcal{V}_h^{\texttt{IN}}$ and $\mathcal{V}_h^{\texttt{ACT}}$ over the union of interior cells and the union of active cells, resp. We consider the same order $p \geq 1$ for all these spaces, and we denote the number of \acp{dof} for background, internal and active spaces by $\widehat{\mathcal{N}}_h$, $\mathcal{N}_h$, $\mathcal{N}^{\texttt{ACT}}_h$, resp. 
 
Clearly, the internal and the background spaces are not suitable choices for the \ac{fe} approximation over $\widetilde{\Omega}$. Unfitted methods that simply employ $\mathcal{V}_h^{\texttt{ACT}}$ as approximation space are affected by severe ill-conditioning due to the presence of small cut cells \cite{DEPRENTER2017297}. There are two main ways to address this issue: either by using stabilization techniques, e.g., in the form of weakly consistent ghost penalty stabilization \cite{Burman2010}, or by using a more robust \ac{fe} space, e.g., in the form of an aggregated \ac{fe} space \cite{BADIA2018533}. In this work, we focus on the latter approach, though the \acp{rom} we propose can be applied to other unfitted methods as well.

The aggregated \ac{fe} method presented in \cite{BADIA2018533} addresses this issue by assigning a set of judiciously constructed linear constraints to the aggregated \acp{dof} associated with cut cells (see Fig.~\ref{fig: embedded nodes}). The \ac{fe} interpolation on the constrained \acp{dof} is uniquely determined by the internal values and the imposed constraints. We denote this space -- also called the aggregated \ac{fe} space -- by $\mathcal{V}_h$. By construction, the following inclusions hold:
\begin{equation*}
    \mathcal{V}^{\texttt{IN}}_h \subseteq \mathcal{V}_h \subseteq \mathcal{V}^{\texttt{ACT}}_h.   
\end{equation*}
In particular, $\mathcal{V}_h$ is entirely characterized by the internal space, and an extension operator to the cut cells, whose expression is determined by the aforementioned constraints. We denote this so-called discrete extension operator by $\mathcal{E}_h$, and define the aggregated space as
\begin{equation} 
    \label{eq: aggregated fe space}
    \mathcal{V}_h \doteq \mathcal{E}_h\circ\mathcal{V}_h^{\texttt{IN}},
\end{equation}
The dimension of $\mathcal{V}_h$ is therefore $\mathcal{N}_h$ -- that is, the number of internal \acp{dof}. The formulation of $\mathcal{E}_h$ along with further details can be found in \cite{BADIA2018533}.

\subsection{Full order model}
\label{sec: full order model}

Let us consider the following parameterized \ac{pde} on $\Omega(\bm{\mu})$: find $u(\bm{\mu}) \in H^1(\Omega(\bm{\mu}))$ such that 
\begin{equation}
    \label{eq: strong form pde}
    \left\{
    \begin{alignedat}{3}
        \mathcal{A}(u(\bm{\mu});\bm{\mu}) &\,=\, 0  &&\quad &&\text{in } \Omega(\bm{\mu}), \\
        u(\bm{\mu}) &\,=\, u_D(\bm{\mu}) &&\quad &&\text{on } \Gamma_D(\bm{\mu}), \\
        \bm{\nabla} u(\bm{\mu}) \cdot \vec{n}(\bm{\mu}) &\,=\, u_N(\bm{\mu}) &&\quad &&\text{on } \Gamma_N(\bm{\mu}).
    \end{alignedat}
\right.
\end{equation}
Without loss of generality, we assume that the Dirichlet boundary $\Gamma_D(\bm{\mu})$ coincides with the deformed boundary $\Gamma(\bm{\mu})$, whereas the Neumann boundary $\Gamma_N(\bm{\mu}) \equiv \partial\Omega(\bm{\mu}) \setminus \Gamma(\bm{\mu})$. Their respective data satisfy $u_D(\bm{\mu}) \in H^{1/2}(\Gamma_D(\bm{\mu}))$ and $u_N(\bm{\mu}) \in H^{-1/2}(\Gamma_N(\bm{\mu}))$.
Moreover, we assume for generality that the differential operator $\mathcal{A}(\cdot;\bm{\mu}) \in H^{-1}(\Omega(\bm{\mu}))$ is nonlinear, and continuously differentiable in its first argument. Then, the weak formulation associated with \eqref{eq: strong form pde} on the aggregated \ac{fe} space \eqref{eq: aggregated fe space} reads as: find $u_h(\bm{\mu}) \in \mathcal{V}_h$ such that 
\begin{equation}
    \label{eq: weak form pde}
    \widebar{a}(u_h(\bm{\mu}),v_h;\bm{\mu}) = 0 \quad \forall \ v_h \in \mathcal{V}_h,
\end{equation}
where
\begin{equation*}
    \begin{split}
        \widebar{a}(u_h,v_h;\bm{\mu}) = a(u_h,v_h;\bm{\mu}) &+ \int_{\Gamma_D(\bm{\mu})} \tau_h (u_h - u_D(\bm{\mu})) v_h - (\bm{\nabla} u_h \cdot \vec{n}(\bm{\mu})) v_h - (\bm{\nabla} v_h \cdot \vec{n}(\bm{\mu})) (u_h - u_D(\bm{\mu})) 
        \\ &- \int_{\Gamma_N(\bm{\mu})} u_N(\bm{\mu}) v_h.
    \end{split}
\end{equation*}
As is customary in unfitted \ac{fe} applications, the Dirichlet condition is enforced weakly via the (symmetric) Nitsche penalty method. The quantity $\tau_h > 0$ denotes the (global) Nitsche penalty constant, and the form $a$ and arises from testing the differential operator $\mathcal{A}$ against the test function $v_h$. The analysis in \cite{BADIA2018533} shows that $\tau_h$ is independent of the minimum cut cell volume, a major difference with respect to standard unfitted \ac{fe} methods. In particular, $\tau_h = \eta h^{-1}$ for a sufficiently large (mesh-independent) positive constant $\eta$ \cite{doi:10.1137/S0036142901384162}, similarly to standard body-fitted \ac{fe} schemes. Although $\tau_h$ should be defined with respect to the deformed mesh size, in practice it can be obtained from the penalty constant in the reference configuration, scaled by a factor accounting for the deformation map. This approach is reasonable here, as we do not operate in regimes involving large deformations. The scaling can be incorporated into an adjusted value of $\eta$ that ensures coercivity for any $\bm{\mu}$. Henceforth, we make no distinction between penalty constants in reference or in deformed configurations. 
Exploiting the differentiability of $\mathcal{A}$, we perform a Newton linearization of \eqref{eq: weak form pde} and solve the resulting problem via the Newton-Raphson scheme. The algebraic form of the linearized system is
\begin{equation}
	\label{eq: fom}
	\text{Given } \bm{w}^{(0)} \in \R^{\mathcal{N}_h}, \text{ compute } \bm{J}(\bm{w}^{(k)};\bm{\mu})\delta\bm{w}^{(k)} = -\bm{r}(\bm{w}^{(k)};\bm{\mu}), \text{ and update } \bm{w}^{(k+1)} = \bm{w}^{(k)} + \delta\bm{w}^{(k)}.
\end{equation}
The iteration is carried out for $k = 1,2,\ldots,$ until a chosen stopping condition is met, such as
\begin{equation}
	\|\delta\bm{w}^{(k)}\| < \kappa,
\end{equation}
with $\kappa$ denoting a sufficiently small threshold. Upon convergence, we define the solution as $\bm{u}(\bm{\mu}) \doteq \bm{w}^{(k+1)}$. In \eqref{eq: fom}, $\bm{J}(\bm{w};\bm{\mu}) \in \R^{\mathcal{N}_h \times \mathcal{N}_h}$ denotes the Jacobian matrix of the nonlinear operator, assembled through numerical integration of the Fréchet derivative \cite{salsa2016numerical,quarteroni2016numerical} of $\widebar{a}$, while $\bm{r}(\bm{w};\bm{\mu}) \in \R^{\mathcal{N}_h}$ represents the residual vector obtained by numerically integrating $\widebar{a}$. 

\subsection{Nodal value extension}
\label{subs: extension}

\begin{figure}[t]
    \tikzset{every picture/.style={line width=0.75pt}} 
    \resizebox{15cm}{7.75cm}{
    \begin{tikzpicture}[x=0.75pt,y=0.75pt,yscale=-1,xscale=1]
        
        \draw (68.71,78.25) node  {\includegraphics[width=187.06pt,height=126pt]{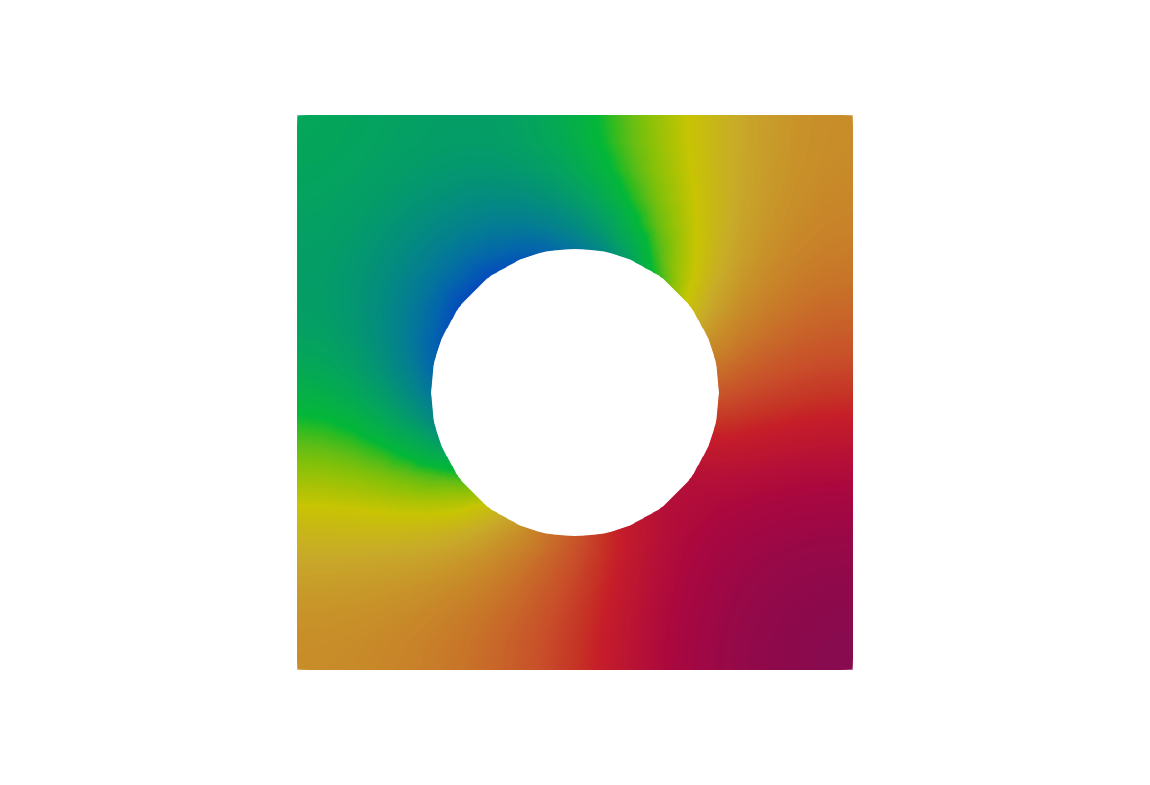}};
        \draw  [line width=1.5]  (37,78.21) .. controls (37,60.97) and (50.97,47) .. (68.21,47) .. controls (85.44,47) and (99.42,60.97) .. (99.42,78.21) .. controls (99.42,95.44) and (85.44,109.42) .. (68.21,109.42) .. controls (50.97,109.42) and (37,95.44) .. (37,78.21) -- cycle ;
        \draw  [line width=1.5]  (196,36.21) .. controls (196,18.97) and (209.97,5) .. (227.21,5) .. controls (244.44,5) and (258.42,18.97) .. (258.42,36.21) .. controls (258.42,53.44) and (244.44,67.42) .. (227.21,67.42) .. controls (209.97,67.42) and (196,53.44) .. (196,36.21) -- cycle ;
        \draw (227.17,36.25) node  {\includegraphics[width=189.75pt,height=130.5pt]{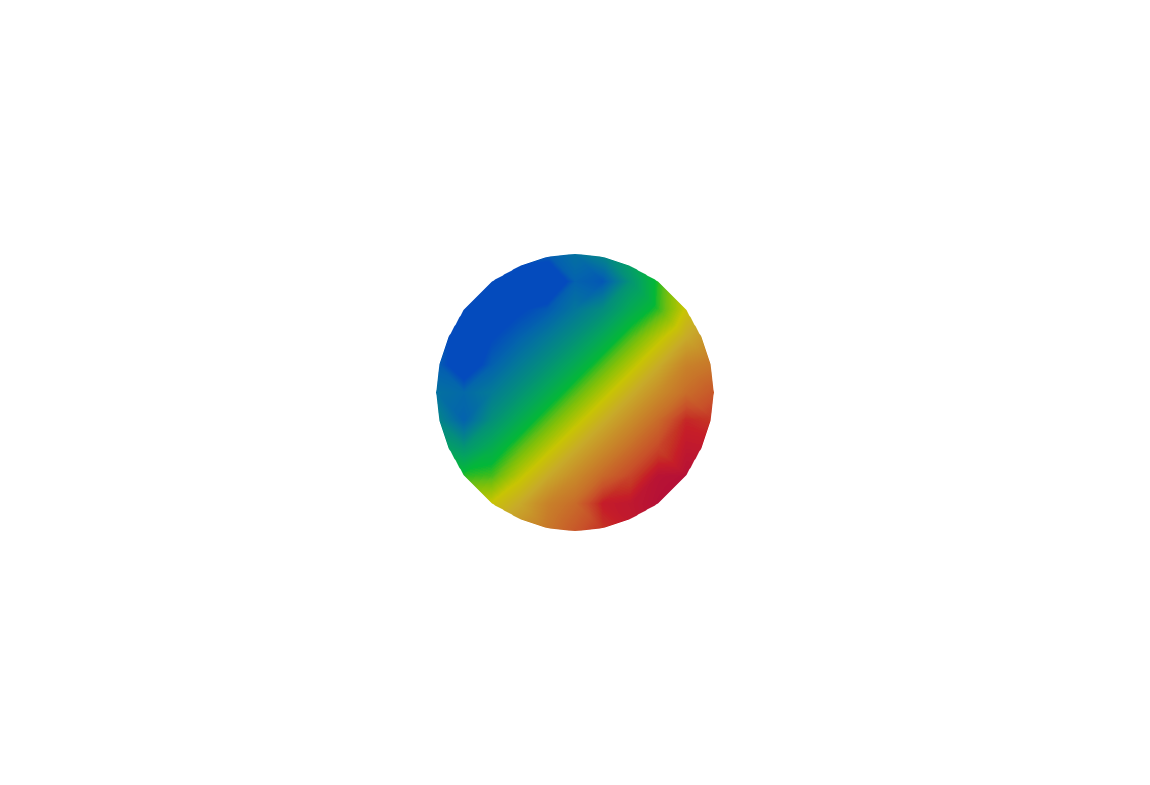}};
        \draw [line width=0.75]  [dash pattern={on 0.84pt off 2.51pt}]  (95.42,92.25) .. controls (102.42,102.25) and (181.42,101.25) .. (210.42,64.25) ;
    \end{tikzpicture}    
    }
    \caption{Harmonic extension into a circular hole cut from the background grid.}
    \label{fig: harmonic extension}
\end{figure}

As detailed in Section~\ref{sec: rb}, the \ac{ttrb} method compresses the \ac{fom} dimensions along each Cartesian direction. This requires constructing solution snapshots that can be assembled into a single $d$-dimensional tensor, which in turn demands solution values to be defined over the entire background domain. To achieve this, we apply a harmonic extension of the high-fidelity solution from the physical domain to the background mesh. This approach, commonly adopted in reduced-order modeling for unfitted discretizations \cite{CHASAPI2023115997,BALAJEWICZ2014489,KARATZAS2020833}, enables the construction of global basis vectors on a structured grid. We emphasize that nodal extension is used exclusively for implementing the \ac{ttrb} method, as the mesh deformation framework already suffices to support classical \ac{rb} techniques. In this work, we consider a discrete harmonic extension $\mathcal{H}_h : \mathcal{V}_h \to \widehat{\mathcal{V}}_h$ that takes the solution in the active mesh and returns an extended function on the whole background mesh $\widehat{u}_h(\bm{\mu}) \doteq \mathcal{H}_h \circ u_h(\bm{\mu})$. In particular, the extension of the solution is defined as 
\begin{equation}
    \label{eq: extended solution}
    \widehat{u}_h(\bm{\mu}) \doteq u_h(\bm{\mu}) \mathbbm{1}_{\Omega(\bm{\mu})} + u^{\texttt{OUT}}_h(\bm{\mu}) \mathbbm{1}_{\widehat{\Omega} \setminus \Omega(\bm{\mu})} \in H^1(\widehat{\Omega}),
\end{equation}
where $u^{\texttt{OUT}}_h(\bm{\mu}) \in \mathcal{V}^{\texttt{OUT}}_h$ is computed as
\begin{equation}
    \label{eq: extension}
    \int_{\widehat{\Omega} \setminus \Omega(\bm{\mu})} \bm{\nabla} \left(u_h(\bm{\mu})+u_h^{\texttt{OUT}}(\bm{\mu})\right) \bm{\nabla} v_h^{\texttt{OUT}} = 0 \quad \forall \ v_h^{\texttt{OUT}} \in \mathcal{V}^{\texttt{OUT}}_h, \quad u_h^{\texttt{OUT}}(\bm{\mu}) = u_h(\bm{\mu}) \quad \text{on} \quad \partial \Omega(\bm{\mu}).
\end{equation}
Here, $\mathcal{V}^{\texttt{OUT}}_h$ denotes the restriction of the background \ac{fe} space to external cells in the background mesh. In algebraic terms, solving \eqref{eq: fom}-\eqref{eq: extension} yields an $\widehat{\mathcal{N}}_h$-dimensional vector of nodal values $\widehat{\bm{u}}(\bm{\mu})$, which can be used to implement the \ac{ttrb} routines, given its $\bm{\mu}$-independent size. 

\subsection{Induced norms}
\label{subs: induced norms}

We conclude this section by defining the norms induced on the previously introduced \ac{fe} spaces. Since these are finite-dimensional spaces, we can equivalently describe them via their associated (symmetric, positive definite) norm matrices. \\
On the aggregated space, we define the matrix $\bm{X}(\bm{\mu}) \in \R^{\mathcal{N}_h \times \mathcal{N}_h}$ associated with the bilinear form:
\begin{equation}
    \label{eq: aggregated norm}
    \int_{\Omega(\bm{\mu})} \bm{\nabla} u_h \cdot \bm{\nabla} v_h d\Omega(\bm{\mu}) + \int_{\Gamma_D(\bm{\mu})} \tau_h u_h v_h  d\Gamma(\bm{\mu}).
\end{equation}
For convenience, throughout this work we often replace this parameter-dependent norm with a parameter-independent one $\bm{X} \in \R^{\mathcal{N}_h \times \mathcal{N}_h}$, associated with the bilinear form:
\begin{equation}
    \label{eq: mu-indep aggregated norm}
    \int_{\widetilde{\Omega}} \bm{\nabla} \widetilde{u}_h \cdot \bm{\nabla} \widetilde{v}_h d\widetilde{\Omega} + \int_{\widetilde{\Gamma}_D} \tau_h \widetilde{u}_h \widetilde{v}_h  d\widetilde{\Gamma}, \quad 
    \widetilde{u}_h \doteq u_h \circ \vec{\psi}^{-1}, \ \widetilde{v}_h \doteq v_h \circ \vec{\psi}^{-1}.
\end{equation}
This substitution is warranted by the fact that we always assume these induced norms to be equivalent, which holds under suitable hypotheses on the deformation map. In particular, let $\lambda_1(\widetilde{\bm{x}},\bm{\mu}),\hdots,\lambda_d(\widetilde{\bm{x}},\bm{\mu})$ denote the sorted eigenvalues of $\vec{\vec{J}}(\bm{\mu})$ evaluated at $\widetilde{\bm{x}} \in \widetilde{\Omega}$. Moreover, let $C_1(\bm{\mu})$ and $C_d(\bm{\mu})$ be positive constants such that
\begin{equation}
    \label{eq: constants norms}
    0 < C_1(\bm{\mu}) \leq \inf\limits_{\widetilde{\bm{x}} \in \widetilde{\Omega}}\lambda_1(\widetilde{\bm{x}},\bm{\mu}) \leq \sup\limits_{\widetilde{\bm{x}} \in \widetilde{\Omega}} \lambda_d(\widetilde{\bm{x}},\bm{\mu}) \leq C_d(\bm{\mu}) < \infty. 
\end{equation}
By virtue of \eqref{eq: constants norms}, for any $v_h \in \mathcal{V}_h$, the norm induced by $\bm{X}(\bm{\mu})$ can be bounded by that induced by $\bm{X}$ as follows:
\begin{equation}
    \label{eq: norm to ref norm}
    \begin{split}
        \|\bm{v}\|^2_{\bm{X}(\bm{\mu})} 
        &= \int_{\widetilde{\Omega}} \vec{\vec{J}}^{-T}(\bm{\mu})\widetilde{\bm{\nabla}} \widetilde{v}_h  \cdot \vec{\vec{J}}^{-T}(\bm{\mu})\widetilde{\bm{\nabla}} \widetilde{v}_h  \mathrm{det}(\vec{\vec{J}}(\bm{\mu})) d\widetilde{\Omega} + \int_{\widetilde{\Gamma}_D} \tau_h \widetilde{v}_h  \widetilde{v}_h  \|\vec{\vec{J}}^{-T}(\bm{\mu}) \widetilde{\vec{n}}\|_2 d\widetilde{\Gamma}
        \\ &\leq 
        \sup\limits_{\widetilde{\bm{x}} \in \widetilde{\Omega}}|\vec{\vec{J}}^{-1}(\bm{\mu})\vec{\vec{J}}^{-T}(\bm{\mu}) \mathrm{det}(\vec{\vec{J}}(\bm{\mu}))| \int_{\widetilde{\Omega}} \widetilde{\bm{\nabla}} \widetilde{v}_h  \cdot \widetilde{\bm{\nabla}} \widetilde{v}_h  +
        \sup\limits_{\widetilde{\bm{x}} \in \widetilde{\Omega}}|\vec{\vec{J}}^{-T}(\bm{\mu})| \int_{\widetilde{\Gamma}_D} \tau_h  \widetilde{v}_h  \widetilde{v}_h  d\widetilde{\Gamma}
        \\ &\lesssim
        \sup\limits_{\widetilde{\bm{x}} \in \widetilde{\Omega}}\max\left(\frac{\lambda_1(\widetilde{\bm{x}},\bm{\mu}) \cdots \lambda_d(\widetilde{\bm{x}},\bm{\mu})}{\lambda_1(\widetilde{\bm{x}},\bm{\mu})^2}, \frac{1}{\lambda_1(\widetilde{\bm{x}},\bm{\mu})}\right) \|\bm{v}\|^2_{\bm{X}} 
        \leq \frac{\max \left(C_d(\bm{\mu})^{d-1},1\right)}{C_1(\bm{\mu})} \|\bm{v}\|^2_{\bm{X}} , 
    \end{split} 
\end{equation}
where $\bm{v} \in \R^{\mathcal{N}_h}$ denotes the vector of nodal values associated with $v_h$ (and $\widetilde{v}_h$). Conversely:
\begin{equation}
    \label{eq: ref norm to norm}
    \|\bm{v}\|^2_{\bm{X}} \lesssim \frac{C_d(\bm{\mu})}{\min\left(1,C_1(\bm{\mu})^{d-1}\right)}\|\bm{v}\|^2_{\bm{X}(\bm{\mu})}.
\end{equation}
In other words, replacing $\bm{X}(\bm{\mu})$ with $\bm{X}$ introduces an error that is controlled by the ratio $C_d(\bm{\mu})/C_1(\bm{\mu})$. As we do not operate in regimes involving large deformations, we expect this error to be reasonable. \\
On $\widehat{\mathcal{V}}_h$, we can directly consider a norm that is independent of $\bm{\mu}$, since the background domain $\widehat{\Omega}$ is, by definition, parameter-independent. Given that harmonic extension is used to define functions outside the physical domain, we adopt the matrix $\widehat{\bm{X}} \in \R^{\widehat{\mathcal{N}}_h \times \widehat{\mathcal{N}}_h}$, representing the bilinear form:
\begin{equation}
    \label{eq: background norm}
    \int_{\widehat{\Omega}} \bm{\nabla} u_h \cdot \bm{\nabla} v_h d\widehat{\Omega}.
\end{equation}
\section{Reduced basis method}
\label{sec: rb}

In this section, we describe the \ac{rb} methodology, a data-driven, projection-based \ac{rom} involving two separate phases: 
\begin{itemize}
    \item A computationally intensive offline phase, during which the reduced subspace is constructed, and the (Petrov-) Galerkin projection of the \ac{fom} \eqref{eq: fom} is precomputed.
    \item A computationally efficient online phase, in which the reduced approximation is rapidly evaluated for any given parameter $\bm{\mu}$.
\end{itemize}
In Subsection \ref{subs: global subspace}, we detail the generation of the global \ac{rb} subspace via \ac{tpod}. In Subsection \ref{subs: reduced equations}, we present the hyper-reduced equations obtained by (i) approximating the residual and Jacobian via a hyper-reduction scheme, and (ii) projecting the results onto the reduced subspace. In Subsection \ref{subs: local subspaces}, we introduce the concept of local \ac{rb} \cite{https://doi.org/10.1002/nme.4371,doi:10.1137/130924408,PAGANI2018530}, which \cite{CHASAPI2023115997} shows improves the results of the global \ac{rb} when solving problems on parameterized domains.

\subsection{Global subspace generation}
\label{subs: global subspace}
Let $\mathcal{D}_{\texttt{OFF}} \subset \mathcal{D}$ be a set of $N_{\mu}$ sampled parameters. The sampling strategy should adequately cover the parameter space; see \cite{quarteroni2015reduced} for further discussion. To construct the global reduced subspace using \ac{tpod}, we begin by assembling a matrix of solution snapshots:
\begin{equation}
    \label{eq: snapshots matrix}
    \bm{U} = \left[\bm{u}(\bm{\mu}_1) | \hdots | \bm{u}(\bm{\mu}_{N_{\mu}})\right] \in \R^{\mathcal{N}_h \times N_{\mu}} ,
\end{equation}
where each column corresponds to the high-fidelity solution of Eq.~\eqref{eq: fom} for a parameter in $\mathcal{D}_{\texttt{OFF}}$. The snapshot matrix is then decomposed via \ac{svd}:
\begin{equation*}
    \bm{U} = \bm{Q}\bm{\Sigma}\bm{V}^T , \quad  \bm{Q} \in \R^{\mathcal{N}_h \times R}, \bm{\Sigma} \in \R^{R \times R}, \bm{V}^T \in \R^{R \times N_{\mu}}, 
\end{equation*}
where $R \leq \min\{\mathcal{N}_h, N_{\mu}\}$ is the rank of $\bm{U}$. Here, $\bm{Q}$ and $\bm{V}$ are orthogonal matrices of left and right singular vectors, and $\bm{\Sigma}$ is a diagonal matrix of sorted singular values. The reduced basis matrix $\bm{\Phi}$ is formed by selecting the first $n$ columns of $\bm{Q}$:
\begin{equation*}
    \bm{\Phi} \doteq \bm{Q}[:,1:n] \in \R^{\mathcal{N}_h \times n}.
\end{equation*}
The variable $n$ indicates the dimension of the subspace, and is usually adaptively chosen to meet a prescribed error tolerance $\varepsilon$, for e.g. using the energy selection criterion:
\begin{equation}
    \label{eq: energy criterion}
    n = \arg\min\limits_{m = 1,\hdots,R} \left( 1 - \sum\limits_{i=1}^m \bm{\Sigma}[i,i]^2 / \sum\limits_{i=1}^{R} \bm{\Sigma}[i,i]^2 \leq \varepsilon^2 \right).
\end{equation}
\begin{algorithm}
    \caption{\texttt{TPOD}: Construct the $\bm{X}$-orthogonal spatial basis $\bm{\Phi} \in \R^{\mathcal{N}_h \times n}$ from the matrix of snapshots $\bm{U} \in \R^{\mathcal{N}_h \times N_{\mu}}$, given a prescribed accuracy $\varepsilon$ and the norm matrix $\bm{X} \in \R^{\mathcal{N}_h \times \mathcal{N}_h}$.}
    \begin{algorithmic}[1]
        \Function{\texttt{TPOD}}{$\bm{U}, \bm{X}, \varepsilon $}
        \State Cholesky factorization: $\bm{H}^T\bm{H} = \texttt{Cholesky}\left(\bm{X}\right)$ 
        \State Spatial rescaling: $\widecheck{\bm{U}} = \bm{H}\bm{U}$ 
        \State Spatial reduction: $\widecheck{\bm{\Phi}}\widecheck{\bm{\Sigma}}\widecheck{\bm{V}}^T = \texttt{RSVD}(\widecheck{\bm{U}},\varepsilon)$ 
        \State Spatial inverse rescaling: $\bm{\Phi} = \bm{H}^{-1} \widecheck{\bm{\Phi}}$
        \State Return $\bm{\Phi}$
        \EndFunction
    \end{algorithmic}
    \label{alg: tpod}
\end{algorithm}
While it is possible to construct a reduced subspace using an \ac{rb} that is merely orthonormal with respect to the Euclidean inner product, such a choice does not guarantee the well-posedness of the resulting \ac{rom}. In contrast, selecting an $H^1$-orthogonal basis ensures that the reduced subspace remains a subset of $\mathcal{V}_h$, which is critical for preserving the stability and well-posedness of the reduced model. However, this condition alone is not sufficient; additional considerations are required, as discussed in Subsection~\ref{subs: supremizers}. For this reason, we adopt Algorithm~\ref{alg: tpod} to generate the \ac{rb}. The function \texttt{RSVD} refers to the randomized \ac{svd} described in \cite{halko2010findingstructurerandomnessprobabilistic}, which discards the final $R - n$ modes of $\bm{U}$ based on the energy criterion~\eqref{eq: energy criterion}. The matrix $\bm{X}$ denotes the norm matrix associated with the bilinear form~\eqref{eq: mu-indep aggregated norm}. The procedure satisfies the following accuracy estimate:
\begin{equation}
    \label{eq: offline accuracy}
    \sum\limits_{k=1}^{N_{\mu}}\|\bm{u}(\bm{\mu}_k) - \bm{\Phi}\bm{\Phi}^T\bm{X}\bm{u}(\bm{\mu}_k)\|^2_{\bm{X}} \leq \varepsilon^2 \sum\limits_{k=1}^{N_{\mu}} \|\bm{u}(\bm{\mu}_k)\|^2_{\bm{X}}.
\end{equation}

\subsection{Reduced order equations}
\label{subs: reduced equations}
Once $\bm{\Phi}$ is computed, we can project the \ac{fom} \eqref{eq: fom} onto the reduced subspace spanned by the \ac{rb}. While Petrov-Galerkin formulations can offer advantages over Galerkin projections in certain cases -- such as improved stability for reduced saddle-point problems \cite{negri2015reduced,doi:10.1137/22M1509114} -- these benefits typically come at the cost of increased offline computational effort. Therefore, in this work, we restrict our attention to the standard Galerkin projection. Adopting an algebraic perspective, the projected equations read as 
\begin{equation}
	\label{eq: rom}
	\text{Given } \bm{w}_n^{(0)} \in \R^{n}, \text{ compute } \bm{J}_n(\bm{w}_n^{(k)};\bm{\mu})\delta\bm{w}_n^{(k)} = -\bm{r}_n(\bm{w}_n^{(k)};\bm{\mu}), \text{ and update } \bm{w}_n^{(k+1)} = \bm{w}_n^{(k)} + \delta\bm{w}_n^{(k)}
\end{equation}
for all $k = 1,2,\hdots,$ where 
\begin{equation}
	\label{eq: rb jacobian/residual}
	\bm{J}_n(\cdot;\bm{\mu}) \doteq \bm{\Phi}^T\bm{J}(\cdot;\bm{\mu})\bm{\Phi} \in \R^{n \times n}, \quad 
	\bm{r}_n(\cdot;\bm{\mu}) \doteq \bm{\Phi}^T\bm{r}(\cdot;\bm{\mu}) \in \R^{n}.
\end{equation}

Since computing the quantities in~\eqref{eq: rb jacobian/residual} involves operations that scale with the full-order dimensions, employing a hyper-reduction strategy to approximate the residual and Jacobian is essential for efficiency. To this end, one can use the \ac{mdeim}~\cite{NEGRI2015431,MUELLER2024115767} to obtain the affine expansions:
\begin{equation}
    \label{eq: affine decomp}
    \bm{J}(\bm{w};\bm{\mu}) \approx \sum\limits_{i=1}^{n^J} \bm{\Phi}^J[:,:,i] \bm{\theta}^J(\bm{w};\bm{\mu})[i], \quad 
    \bm{r}(\bm{w};\bm{\mu}) \approx \sum\limits_{i=1}^{n^r} \bm{\Phi}^r[:,i] \bm{\theta}^r(\bm{w};\bm{\mu})[i].
\end{equation}
Here, 
\begin{equation}
    \label{eq: hypred bases}
    \bm{\Phi}^J \in \R^{\mathcal{N}_h \times \mathcal{N}_h \times n^J}, \quad 
    \bm{\Phi}^r \in \R^{\mathcal{N}_h \times n^r}
\end{equation}
represent two bases that span reduced-dimensional subspaces for the residual and Jacobian, respectively of dimension $n^J$ and $n^r$. On the other hand, 
\begin{equation}
    \label{eq: hypred coeffs}
    \bm{\theta}^J(\bm{w};\bm{\mu}) \in \R^{n^J}, \quad \bm{\theta}^r(\bm{w};\bm{\mu}) \in \R^{n^r}
\end{equation}
are the nonlinear coefficients associated with the bases $\bm{\Phi}^J$ and $\bm{\Phi}^r$, respectively. Substituting \eqref{eq: affine decomp} in \eqref{eq: rom}, we obtain the hyper-reduced equation: 
\begin{equation}
    \label{eq: hypred rom}
    \text{Given } \bm{w}_n^{(0)} \in \R^{n}, \text{ compute } \widebar{\bm{J}}_n(\bm{w}_n^{(k)};\bm{\mu})\delta\bm{w}_n^{(k)} = -\widebar{\bm{r}}_n(\bm{w}_n^{(k)};\bm{\mu}), \text{ and update } \bm{w}_n^{(k+1)} = \bm{w}_n^{(k)} + \delta\bm{w}_n^{(k)}
\end{equation}
for all $k = 1,2,\hdots,$ where 
\begin{equation}
    \label{eq: hypred quantities}
    \R^{n \times n} \ni \widebar{\bm{J}}_n(\bm{w}_n;\bm{\mu}) = \sum\limits_{i=1}^{n^J} \bm{\Phi}^T\bm{\Phi}^J[:,:,i] \bm{\Phi} \bm{\theta}^J(\bm{w}_n;\bm{\mu})[i], \quad 
    \R^{n} \ni \widebar{\bm{r}}_n(\bm{w}_n;\bm{\mu}) = \sum\limits_{i=1}^{n^r} \bm{\Phi}^T\bm{\Phi}^r[:,i] \bm{\theta}^r(\bm{w}_n;\bm{\mu})[i] .
\end{equation}
Since the bases in \eqref{eq: hypred bases} are $\bm{\mu}$-independent, most of the computations in \eqref{eq: hypred quantities} can be performed offline. During the online phase, we simply compute the reduced coefficients \eqref{eq: hypred coeffs} and multiply them by their respective projected bases. The final online step is the solution of \eqref{eq: hypred rom}. Because the cost of these operations depends only on $n$, $n^J$, $n^r$, and the number of Newton-Raphson iterations, the cost of the online phase is independent of the \ac{hf} dimensions. \\ 
For completeness, we recall in Alg.~\ref{alg: mdeim} the \ac{mdeim} procedure used to obtain an affine approximation of the residual. 
\begin{algorithm}
    \caption{\texttt{MDEIM}: Given the matrix of residual snapshots $ \bm{R} = [\bm{r}(\bm{u}(\bm{\mu}_1);\bm{\mu}_1)|\hdots|\bm{r}(\bm{u}(\bm{\mu}_{N_{\mu}});\bm{\mu}_{N_{\mu}})] \in \R^{\mathcal{N}_h \times N_{\mu}}$ , and a prescribed accuracy $ \varepsilon $, build the $\ell^2$-orthogonal basis $\bm{\Phi}^r \in \R^{\mathcal{N}_h \times n^r}$, and the sampling matrix $\bm{P}^r \in \{0,1\}^{\mathcal{N}_h \times n^r}$.} 
	\begin{algorithmic}[1]
    \Function{\texttt{MDEIM}}{$ \bm{R}, \varepsilon $}
	\State Compute $\bm{\Phi}^r\bm{\Sigma}^r\left(\bm{V}^r\right)^T = \texttt{RSVD}\left(\bm{R},\varepsilon\right)$ 
	\State Set $\bm{P}^r = \left[ \bm{e}(\mathcal{i}^1)  \right]$, where $\mathcal{i}^1 = \argmax \vert \bm{\Phi}^r\left[:,1\right] \vert$ 
	\For{$ q \in \{2,\ldots,{n^r}\}$} 
    \State Set $\bm{w} = \bm{\Phi}^r\left[:,q\right]$, $\bm{W} = \bm{\Phi}^r\left[:,1:q-1\right]$
	\State Compute remainder $\bm{\delta} = \bm{w} - \bm{W} \left( \left(\bm{P}^r\right)^T \bm{W} \right)^{-1} \left(\bm{P}^r\right)^T \bm{w}$
	\State Update $\bm{P}^r = \left[ \bm{P}^r, \bm{e}(\mathcal{i}^q)\right]$, where $\mathcal{i}^q = \argmax \vert \bm{\delta} \vert$
	\EndFor 
    \State \Return  $\bm{\Phi}^r$, $\bm{P}^r$ 
    \EndFunction
	\end{algorithmic}
	\label{alg: mdeim}
\end{algorithm}	
Here, $\bm{e}(j)$ indicates the $j$th vector of the $\mathcal{N}_h$-dimensional canonical basis. The procedure is run entirely offline, whereas during the online phase, given a new parameter $\bm{\mu}$, the \ac{mdeim} approximation reads as:
\begin{equation}
    \label{eq: mdeim approximation}
    \bm{r}(\bm{u}(\bm{\mu});\bm{\mu}) \approx 
    \bm{\Phi}^r \bm{\theta}^r(\bm{u}(\bm{\mu});\bm{\mu}), 
    \quad \text{where} \quad
    \bm{\theta}^r(\bm{u}(\bm{\mu});\bm{\mu}) = \left( \left(\bm{P}^r\right)^T\bm{\Phi}^r \right)^{-1} \left( \bm{P}^r \right)^T \bm{r}(\bm{u}(\bm{\mu});\bm{\mu}).
\end{equation}
The sampling matrix
\begin{equation*}
    \bm{P}^r \in \{0,1\}^{\mathcal{N}_h \times n^r}
\end{equation*}
encodes a list of $n^r$ interpolation indices, which can be exploited to quickly evaluate $\left( \bm{P}^r \right)^T \bm{r}(\bm{u}(\bm{\mu});\bm{\mu})$. Indeed, this operation only requires running the \ac{fe} integration and assembly routines on the cells containing at least an interpolation index \cite{NEGRI2015431}. In other words, the sampling matrix identifies a global ``reduced integration domain'' that can be exploited to efficiently perform the online approximation in \eqref{eq: mdeim approximation}. Lastly, note that for the \ac{mdeim} approximation of the Jacobian, Algorithm~\ref{alg: mdeim} can be applied directly -- without modification -- to the nonzero entries of the Jacobian snapshots, as detailed in~\cite{NEGRI2015431}.
\begin{remark}
    Our implementation of \ac{mdeim} is applied separately to each integral appearing in the weak formulation (for example, boundary integrals are treated independently of interior contributions), as well as to the linear and nonlinear components of the residual. These choices are motivated by empirical evidence showing that handling these components individually substantially reduces the hyper-reduction error. Conceptually, such splittings decrease the complexity of the residual manifold. As also discussed in Section~\ref{sec: saddle-point}, we apply \ac{mdeim} independently to each block when addressing multi-field problems, such as saddle-point systems. 
\end{remark}
\begin{remark}
    The weak imposition of the Dirichlet condition (see Eq.~\eqref{eq: weak form pde}) leads to a simpler treatment than the more common strong imposition, particularly in nonhomogeneous and potentially parameterized settings. In this framework, enforcing the boundary condition amounts to adding an extra contribution to the residual, which in turn requires only one extra application of \ac{mdeim} to account for the corresponding integral. 
\end{remark}
\begin{remark}
    Although this point has been made implicitly, we emphasize that the use of deformation maps enables a seamless application of standard empirical interpolation techniques, even for problems posed on parameterized domains. In contrast, unfitted approaches that do not employ deformation maps and rely solely on extension-based strategies \cite{Karatzas_2019,KARATZAS2020833} face two significant difficulties. First, they must extend snapshot values to the entire background domain -- a particularly challenging task for residuals and Jacobians. Second, they must perform the greedy selection on these extended quantities. This is potentially cumbersome, as the selected entries may correspond to background cells lying outside the active domain for certain parameters. In that case, it becomes unclear how the resulting ``reduced integration domain'' could be meaningfully exploited to achieve efficient online computations.
\end{remark}

\subsection{Local subspaces method}
\label{subs: local subspaces}

Local \ac{rb} methods have proven effective for problems involving discontinuities or moving fronts, as demonstrated in~\cite{https://doi.org/10.1002/nme.4371,doi:10.1137/130924408,PAGANI2018530}. More recently, \cite{CHASAPI2023115997} applied localized \ac{rb} techniques to unfitted \acp{fe} on parameterized geometries -- a strategy we also adopt in this work. The motivation behind these approaches is the general non-reducibility of problems posed on parameterized domains. This lack of reducibility typically necessitates large reduced bases and, correspondingly, a large number of affine components in both the residual and Jacobian. To address this, we implement a localized approach, summarized in Algs.~\ref{alg: offline phase}-\ref{alg: online phase}, where $N_c$ and $N_c^h$ denote the number of clusters used for the reduced subspace and for hyper-reduction, respectively.

\begin{algorithm}
    \caption{\texttt{OFFLINE}: Given a set of offline parameters $\bm{\mu}_1,\hdots,\bm{\mu}_{N_{\mu}} \in \mathcal{D}$, the number of clusters for the reduced subspace $N_c$, the number of clusters for the hyper-reduction $N_c^h$, and a prescribed accuracy $ \varepsilon $, compute all offline quantities.} 
    \begin{algorithmic}[1]
        \Function{\texttt{OFFLINE}}{$\bm{\mu}_1,\hdots,\bm{\mu}_{N_{\mu}}, N_c,N_c^h, \bm{X},\varepsilon $}
        \State Run clustering of parameters (reduced subspace): $\bm{\alpha}_1,\hdots,\bm{\alpha}_{N_c} = \texttt{Kmeans}([\bm{\mu}_1,\hdots,\bm{\mu}_{N_{\mu}}], N_c)$, 
        \State Run clustering of parameters (hyper-reduction): $\bm{\beta}_1,\hdots,\bm{\beta}_{N_c^h} = \texttt{Kmeans}([\bm{\mu}_1,\hdots,\bm{\mu}_{N_{\mu}}], N_c^h)$
        \State Compute solution, residual and Jacobian snapshots for every $\bm{\mu}_1,\hdots,\bm{\mu}_{N_{\mu}}$
        \For{$k = 1,\hdots,N_c$}
        \State Fetch $\bm{U}_k$, the solution snapshots whose parameter belongs to the $k$th subspace cluster 
        \State Compute $\bm{\Phi}_k = \texttt{TPOD}(\bm{U}_k,\bm{X},\varepsilon)$ 
        \For{$l = 1,\hdots,N_c^h$} 
        \State Fetch $\bm{J}_l$ and $\bm{R}_l$, the Jac. and res. snapshots whose parameter belongs to the $l$th hyper-reduction cluster
        \State Compute $\bm{\Phi}^J_l, \bm{P}^J_l = \texttt{MDEIM}(\bm{J}_l,\varepsilon)$, $\bm{\Phi}^r_l, \bm{P}^r_l = \texttt{MDEIM}(\bm{R}_l,\varepsilon)$
        \State Compute $\{\bm{\Phi}^J_{k,l}, \bm{\Phi}^r_{k,l}\}_{k,l}$: Galerkin projections with respect to $\bm{\Phi}_k$, as in \eqref{eq: hypred quantities}
        \EndFor
        \EndFor
        \State \Return $\{\bm{\Phi}_k, \bm{\Phi}^J_{k,l}, \bm{\Phi}^r_{k,l}, \bm{P}^J_l, \bm{P}^r_l\}_{k,l}$
        \EndFunction
    \end{algorithmic}
    \label{alg: offline phase}
\end{algorithm} 

\begin{algorithm}
    \caption{\texttt{ONLINE}: Given an online parameter $\bm{\mu} \in \mathcal{D}$ and the quantities computed offline, compute the reduced solution $\bm{u}_n(\bm{\mu}) \in \R^n$.} 
    \begin{algorithmic}[1]
        \Function{\texttt{ONLINE}}{$\bm{\mu},\{\bm{\Phi}_k, \bm{\Phi}^J_{k,l}, \bm{\Phi}^r_{k,l}, \bm{P}^J_l, \bm{P}^r_l\}_{k,l}$}
        \State Find closest subspace cluster: $k = \arg \min\limits_{i=1,\hdots,N_c} \|\bm{\mu} - \bm{\alpha}_i\|_2$
        \State Find closest hyper-reduction cluster: $l = \arg \min\limits_{i=1,\hdots,N^h_c} \|\bm{\mu} - \bm{\beta}_i\|_2$
        \State Fetch the offline quantities $\bm{\Phi}_k, \bm{\Phi}^J_{k,l} , \bm{\Phi}^r_{k,l}, \bm{P}^J_l, \bm{P}^r_l$ associated with $(k,l)$ 
        \State Compute $\bm{\theta}^J(\bm{\mu}), \bm{\theta}^r(\bm{\mu})$ according to \eqref{eq: mdeim approximation}
        \State Assemble and solve the hyper-reduced problem \eqref{eq: hypred rom}-\eqref{eq: hypred quantities}
        \State \Return $\bm{u}_n(\bm{\mu})$
        \EndFunction
    \end{algorithmic}
    \label{alg: online phase}
\end{algorithm} 

For additional details, we refer the reader to~\cite{CHASAPI2023115997}. Here, we briefly summarize the rationale behind the proposed strategy. The core idea is to partition the set of offline parameters into clusters and construct local reduced subspaces and hyper-reduction quantities within each cluster. This serves as a form of piecewise linearization of the parameter-to-solution map, which is typically highly nonlinear in the presence of parameterized geometries. In essence, the localization strategy aims to reduce the Kolmogorov $N$-width of the high-fidelity solution manifold, as well as that of the left- and right-hand sides of the governing equations. This approach is conceptually motivated by the following two assumptions:
\begin{enumerate}
    \item For any $\bm{\mu} \in \mathcal{D}$ belonging to the $k$th subspace cluster and $l$th hyper-reduction cluster, we assume:
    \begin{equation}
        \label{eq: ansatz 1}
        \|\bm{u}(\bm{\mu}) - \bm{u}(\bm{\alpha}_k)\|_{\bm{X}} \propto \mathrm{dist} \left( \bm{\mu}, \bm{\alpha}_k \right), \quad 
        \|\bm{J}(\bm{\mu}) - \bm{J}(\bm{\beta}_l)\|_F \propto \mathrm{dist} \left( \bm{\mu}, \bm{\beta}_l \right), \quad 
        \|\bm{r}(\bm{\mu}) - \bm{r}(\bm{\beta}_l)\|_2 \propto \mathrm{dist} \left( \bm{\mu}, \bm{\beta}_l \right).
    \end{equation}
    Here, $\bm{\alpha}_k$ and $\bm{\beta}_l$ are cluster centers for subspaces and hyper-reduction quantities, and $\mathrm{\mathrm{dist}}: \mathcal{D} \times \mathcal{D} \to \R$ is defined as
    \begin{equation*}
        \mathrm{dist}(\bm{\mu}_k,\bm{\mu}_l) \mapsto \max\limits_{i,j}\{\|\bm{x}_i - \bm{x}_j\|_2, \ \bm{x}_i, \bm{x}_j \in \Gamma(\bm{\mu}_i) \cap \Gamma(\bm{\mu}_l)\},
    \end{equation*} 
    and quantifies the geometric distance between the deformed boundaries associated with two parameters.
    \item For any $\bm{\mu} \in \mathcal{D}$ belonging to the $k$th subspace cluster, there exists a positive constant $C$ such that 
    \begin{equation}
        \label{eq: ansatz 2}
        \mathrm{dist} \left( \bm{\mu}, \bm{\alpha}_k \right) \leq C\|\bm{\mu} - \bm{\alpha}_k\|_2. 
    \end{equation}
\end{enumerate}
If both assumptions~\eqref{eq: ansatz 1} and~\eqref{eq: ansatz 2} hold, then the localization strategy is justified: the intra-cluster variation in solutions and operators is controlled by the parameter distance. While these assumptions are reasonable in many cases, they do not hold universally. Consequently, the localization procedure remains outside the certified framework of classical \ac{rb} methods. As noted in~\cite{CHASAPI2023115997}, the development of rigorous a posteriori error estimators for Algs.~\ref{alg: offline phase}-\ref{alg: online phase} remains an open and important challenge. Nevertheless, in Section~\ref{sec: results}, we demonstrate empirically that the proposed strategy performs well in practice, despite the lack of theoretical guarantees. Finally, we note that the framework could be further generalized by allowing for distinct clusterings of the residual and Jacobian, or pursue alternative clustering algorithms to \texttt{Kmeans} \cite{Likas2003GlobalKMeans}. However, we do not pursue these options in the present work.

\section{Tensor-train reduced basis method}
\label{sec: ttrb}

In this section, we extend the framework introduced in Section~\ref{sec: rb} to accommodate the \ac{ttrb} method, following the approach described in~\cite{MUELLER2026116790}. Subsection~\ref{subs: tt decomposition} provides a brief overview of the \ac{tt} decomposition for general tensors. In Subsection~\ref{subs: ttrb procedure}, we summarize the key components of the \ac{ttrb} methodology, highlighting the modifications it introduces to subspace generation, hyper-reduction, and projection procedures.

\subsection{Tensor-train decomposition}
\label{subs: tt decomposition}
\ac{tt} decompositions are a category of low-rank tensor approximations. They aim to approximate the entries of a given $d$-dimensional tensor as a (multi-dimensional) product of $d$ arrays, also called \ac{tt} cores. Specifically, given a tensor $\bm{U} \in \R^{\mathcal{N}_{h_1} \times \hdots \times \mathcal{N}_{h_d}}$, its \ac{tt} decomposition is given by the three-dimensional \ac{tt} cores 
\begin{equation}
    \label{eq: tt decomposition}
    \{\bm{\Phi}_i\}_{i=1}^d, \quad \bm{\Phi}_i \in \R^{r_{i-1} \times \mathcal{N}_{h_i} \times r_i},
\end{equation}
with $r_0 = r_d = 1$. Sequentially contracting the \ac{tt} cores yields an approximation of $\bm{U}$:
\begin{equation}
    \label{eq: tt product outer}
    \bm{U} \approx \bm{U}_r = \sum\limits_{i_1=1}^{r_1} \sum\limits_{i_2=1}^{r_2} \cdots \sum\limits_{i_{d-1}=1}^{r_{d-1}} \bm{\Phi}_1[1,:,i_1] \otimes \bm{\Phi}_2[i_1,:,i_2] \otimes \cdots \otimes \bm{\Phi}_d[i_{d-1},:,1].
\end{equation}
Here, $\otimes$ denotes the outer product between two arrays. We may simplify the notation above by introducing the contraction along a common axis of two multi-dimensional arrays 
\begin{equation*}
    \bm{R} \in \R^{N_a \times N_b \times N_c}, \qquad 
    \bm{S} \in \R^{N_c \times N_d \times N_e}
\end{equation*}
defined as 
\begin{equation}
    \label{eq: tt product}
    \R^{N_a \times N_b \times N_d \times N_e} \ni \bm{T} \doteq \bm{R} \cdot \bm{S}, \qquad \bm{T}\left[\alpha_a,\alpha_b,\alpha_d,\alpha_e\right] = \sum\limits_{\alpha_c} \bm{R} \left[\alpha_a,\alpha_b,\alpha_c\right] \bm{S} \left[\alpha_c,\alpha_d,\alpha_e\right].
\end{equation}
Since the last axis of a core matches the first axis of its successor, we may apply \eqref{eq: tt product} sequentially:
\begin{equation}
    \label{eq: tt approximation}
    \bm{U} \approx \bm{U}_r = \bm{\Phi}_1 \cdot \bm{\Phi}_2 \cdots \bm{\Phi}_d,
\end{equation}
The rank of the decomposition \eqref{eq: tt decomposition} is $\bm{r} = (r_1,\hdots,r_d)$. Each $r_i$ is referred to as a reduced rank, and is chosen according to the energy selection criterion \eqref{eq: energy criterion}. The \ac{tt} cores are usually computed by applying either the \ac{ttsvd} \cite{oseledets2011tensor} or the \ac{ttcross} \cite{oseledets2010tt} algorithms to $\bm{U}$. We recall the \ac{ttsvd} procedure, as first presented in~\cite{oseledets2011tensor}, in Algorithm~\ref{alg: ttsvd}.
\begin{algorithm}
    \caption{\texttt{TT-SVD}: Given the snapshots tensor in the ``split-axes'' format $\bm{U} \in \R^{\mathcal{N}_{h_1} \times \hdots \times \mathcal{N}_{h_d}}$ and a prescribed accuracy $ \varepsilon $, compute the \ac{tt} cores $\bm{\Phi}_1,\hdots,\bm{\Phi}_d$.} 
    \begin{algorithmic}[1]
        \Function{\texttt{TT-SVD}}{$ \bm{U}, \varepsilon $}
        \State Initialize unfolding matrix: $\bm{T} = \texttt{reshape}(\bm{U},\mathcal{N}_{h_1},:)$ 
        \For{i = $1, \hdots , d $} 
        \State $i$th reduction: $\bm{\Phi}_i\bm{\Sigma}_i\bm{V}_i^T = \texttt{RSVD}(\bm{T},\varepsilon)$ 
        \State Update unfolding matrix: $\bm{T} = \bm{\Sigma}_i\bm{V}_i^T$ \Comment{$\bm{T} \in \R^{r_i \times N_{\mu}\prod_{j=i+1}^d \mathcal{N}_{h_j} }$}
        \EndFor
        \State \Return  $\bm{\Phi}_1, \hdots, \bm{\Phi}_d$
        \EndFunction
    \end{algorithmic}
    \label{alg: ttsvd}
\end{algorithm} 
Here, the function \texttt{TSVD} denotes the truncated \ac{svd}, and $\varepsilon$ is the prescribed truncation tolerance. 

\subsection{Global TT-RB subspace generation}
\label{subs: ttrb procedure}

\ac{tt} decompositions can be combined with \ac{rb} techniques to construct efficient approximation strategies for high-dimensional solution manifolds. We refer to the resulting approach as the \ac{ttrb} method, following the terminology in~\cite{MUELLER2026116790}. Assuming that each \ac{hf} solution snapshot can be reshaped into a $d$-dimensional tensor, we concatenate them along a new dimension to form a $(d+1)$-dimensional snapshot tensor:
\begin{equation}
    \label{eq: tensor snapshots}
    \bm{U} = \left[\bm{U}(\bm{\mu}_1) | \hdots | \bm{U}(\bm{\mu}_{N_{\mu}})\right] \in \R^{\mathcal{N}_{h_1} \times \hdots \times \mathcal{N}_{h_d} \times N_{\mu}}, 
    \quad \bm{U}(\bm{\mu}_i) \in \R^{\mathcal{N}_{h_1} \times \hdots \times \mathcal{N}_{h_d}} \ \forall \ i, \quad \mathcal{N}_h = \prod_{i=1}^{d}\mathcal{N}_{h_i}.
\end{equation}
Similarly, a low-dimensional approximation of \eqref{eq: tensor snapshots} can be constructed as follows:
\begin{enumerate}
    \item Compute the first $d$ cores of the \ac{tt} decomposition of $\bm{U}$, namely $\{\bm{\Phi}_i\}_{i=1}^d$. These cores, conceptually, form the \ac{tt} decomposition of the basis that spans the reduced subspace approximating the manifold. Note that $r_0 = 1$ as before, but $r_d > 1$ in general, and represents the dimension of the reduced subspace.
    \item For any new parameter $\bm{\mu} \in \mathcal{D}$, find the reduced coefficient $\bm{u}_r(\bm{\mu}) \in \R^{r_d}$ such that 
    \begin{equation}
        \label{eq: tt approx}
        \bm{u}(\bm{\mu}) \approx \bm{\Phi} \bm{u}_r(\bm{\mu}), \quad \text{where} \quad \bm{\Phi} \doteq \bm{\Phi}_1 \cdots \bm{\Phi}_d \in \R^{\mathcal{N}_h \times r_d}.
    \end{equation} 
\end{enumerate}
As shown in \eqref{eq: tt approx}, the basis spanning the reduced subspace can be expressed as a matrix by sequentially contracting the \ac{tt} cores. \\
Representing snapshots as tensors, as in \eqref{eq: tensor snapshots}, is a key feature of the so-called ``split-axes'' format introduced in \cite{MUELLER2026116790}. Although this tensor representation is not strictly required to perform \ac{ttrb} -- since rank-reduction techniques like \ac{ttsvd} or \ac{ttcross} can also be applied to snapshots stored as matrices (see \eqref{eq: snapshots matrix}) -- the ``split-axes'' format is highly advantageous, particularly in spatial dimensions $d \geq 3$, as it significantly reduces the computational cost of constructing the reduced subspace. For further details, we refer to \cite{MUELLER2026116790}. To build the ``split-axes'' snapshots, we extend the \ac{hf} solution to the background grid using the harmonic extension operator described in Subsection~\ref{subs: extension}. Specifically, the $i$th snapshot in \eqref{eq: tensor snapshots} is given by
\begin{equation*}
    \bm{U}(\bm{\mu}_i) \doteq \texttt{reshape}(\widehat{\bm{u}}(\bm{\mu}_i), \mathcal{N}_{h_1}, \hdots, \mathcal{N}_{h_d}),
\end{equation*}
where $\mathcal{N}_{h_i}$ represents the number of \acp{dof} in the $i$th Cartesian direction. This size is well-defined because the background grid $\widehat{\Omega}_h$ is, by definition, a quadrangular partition. \\ 
In this work, we utilize a modified version of the \ac{ttsvd} algorithm (see Alg. \ref{alg: ttsvd}) that yields an $H^1(\widehat{\Omega})$-orthogonal basis. Since the procedure is rather involved, we refer the reader to \cite{MUELLER2026116790} for a detailed step-by-step explanation. Here, we recall the resulting accuracy estimate:
\begin{equation}
    \label{eq: offline accuracy ttrb}
    \sum\limits_{k=1}^{N_{\mu}}\|\widehat{\bm{u}}(\bm{\mu}_k) - \bm{\Phi}\bm{\Phi}^T\widehat{\bm{X}}\widehat{\bm{u}}(\bm{\mu}_k)\|^2_{\widehat{\bm{X}}} \leq d \varepsilon^2 \sum\limits_{k=1}^{N_{\mu}} \|\widehat{\bm{u}}(\bm{\mu}_k)\|^2_{\widehat{\bm{X}}},
\end{equation}
where $\widehat{\bm{X}}$ is the norm matrix associated with the bilinear form \eqref{eq: background norm}. This estimate shows that the \ac{ttsvd}-based approach degrades the accuracy of \ac{tpod} by a factor of $\sqrt{d}$. However, as noted in \cite{MUELLER2026116790}, the error incurred by \ac{ttrb} relative to \ac{tpod}-based methods can exceed this $\sqrt{d}$ factor -- a trend we also confirm in our numerical results.

\subsection{Notes on reduced order equations, hyper-reduction and localized strategy for TT-RB}
\label{subs: final notes ttrb}
The remaining steps of the \ac{ttrb} procedure follow analogously to those described in Subsections~\ref{subs: reduced equations}--\ref{subs: local subspaces}. Adopting an algebraic viewpoint once more, the hyper-reduced projected equations for \ac{ttrb} are given by: find $\bm{u}_r(\bm{\mu}) \in \R^{r_d}$ such that
\begin{equation}
    \label{eq: hypred ttrb}
    \text{Given } \bm{w}_r^{(0)} \in \R^{r}, \text{ compute } \widebar{\bm{J}}_r(\bm{w}_r^{(k)};\bm{\mu})\delta\bm{w}_r^{(k)} = -\widebar{\bm{r}}_r(\bm{w}_r^{(k)};\bm{\mu}), \text{ and update } \bm{w}_r^{(k+1)} = \bm{w}_r^{(k)} + \delta\bm{w}_r^{(k)}
\end{equation}
The quantities 
\begin{equation*}
    \widebar{\bm{J}}_r(\bm{\mu}) \in \R^{r_d \times r_d}, \quad 
    \widebar{\bm{r}}_r(\bm{\mu}) \in \R^{r_d}
\end{equation*}
are the hyper-reduced forms of the Jacobian and residual, respectively, and are defined as in \eqref{eq: hypred quantities}. These can be computed using the \ac{ttmdeim} procedure~\cite{MUELLER2026116790}, which extends the \ac{mdeim} algorithm to accommodate bases expressed in the \ac{tt} format. Since \ac{ttmdeim} and \ac{mdeim} exhibit comparable computational cost, we opt to use the latter for generating the numerical results presented in Section~\ref{sec: results}. Consequently, we use the term \ac{ttrb} to denote the \ac{rom} that combines \ac{ttsvd} for generating the reduced subspace with \ac{mdeim} for hyper-reduction. Finally, the localization strategy summarized in Algs.~\ref{alg: offline phase}-\ref{alg: online phase} can be readily adapted to the \ac{ttrb} framework by replacing the call to \ac{tpod} with a call to \ac{ttsvd} (line 7 of Alg.~\ref{alg: offline phase}).
\section{Approximation of saddle-point problems}
\label{sec: saddle-point}

In this section, we discuss the \ac{rb} approximation of a parameterized saddle-point problem discretized using the unfitted \ac{fe} method introduced in Section~\ref{sec: unfitted fe method}. We consider a benchmark problem governed by the steady, incompressible Stokes equations, formally presented in Subsection~\ref{subs: saddle-point full order model}. Next, in Subsection~\ref{subs: saddle-point reduced order model}, we state the reduced equations associated with this benchmark. Then, in Subsection~\ref{subs: supremizers}, we detail the supremizers enrichment -- a stabilization technique typically employed in \ac{rb} methods for saddle-point problems -- specifically adapted to address challenges arising from parameterized domains. For brevity of exposition, we focus our attention on a classical \ac{rb} method, without presenting the details of \ac{ttrb} implementations.

\subsection{Full order model}
\label{subs: saddle-point full order model}

We begin this subsection by stating the weak formulation of the incompressible, steady Stokes equations: find $(\vec{u}_h(\bm{\mu}),p_h(\bm{\mu})) \in \mathcal{V}_h \times \mathcal{Q}_h$ such that
\begin{align} 
    \label{eq: weak formulation stokes}
    \left\{
    \begin{aligned}
        a(\vec{u}_h(\bm{\mu}),\vec{v}_h;\bm{\mu}) - b(\vec{v}_h,p_h(\bm{\mu});\bm{\mu}) &= l(\vec{v}_h;\bm{\mu}) &&\forall \vec{v}_h \in \mathcal{V}_h, \\
        b(q_h,\vec{u}_h(\bm{\mu});\bm{\mu}) &= k(q_h;\bm{\mu}) &&\forall q_h \in \mathcal{Q}_h,
    \end{aligned} 
    \right.
\end{align}
where 
\begin{equation}
    \label{eq: stokes forms definition}
    \begin{split}
        a(\vec{u}_h,\vec{v}_h;\bm{\mu}) &= \int_{\Omega(\bm{\mu})} \bm{\nabla} \vec{u}_h : \bm{\nabla} \vec{v}_h + \int_{\Gamma_D(\bm{\mu})} \tau_h \vec{u}_h \cdot \vec{v}_h - (\bm{\nabla} \vec{u}_h \vec{n}(\bm{\mu})) \cdot \vec{v}_h - (\bm{\nabla} \vec{v}_h \vec{n}(\bm{\mu})) \cdot \vec{u}_h, \\ 
        b(\vec{u}_h,q_h;\bm{\mu}) &= \int_{\Omega(\bm{\mu})} q_h \bm{\nabla} \cdot \vec{u}_h - \int_{\Gamma_D(\bm{\mu})} q_h \vec{u}_h \cdot \vec{n}(\bm{\mu}), \\
        l(\vec{v}_h;\bm{\mu}) &= \int_{\Omega(\bm{\mu})} \vec{f}(\bm{\mu}) \cdot \vec{v}_h + \int_{\Gamma_D(\bm{\mu})} \tau_h \vec{u}_D(\bm{\mu}) \cdot \vec{v}_h - (\bm{\nabla} \vec{u}_D(\bm{\mu}) \vec{n}(\bm{\mu})) \cdot \vec{v}_h, \\
        k(q_h;\bm{\mu}) &= -\int_{\Gamma_D(\bm{\mu})} q_h \vec{u}_D(\bm{\mu}) \cdot \vec{n}(\bm{\mu}).
    \end{split}
\end{equation}
We consider an inf-sup stable velocity and pressure combination of \ac{fe} spaces $\mathcal{V}_h^{\texttt{IN}} \times \mathcal{Q}_h^{\texttt{IN}}$ on the interior mesh. In the numerical experiments, we use the Taylor-Hood pair of continuous piecewise quadratic velocities and piecewise linear pressures. Then, we introduce the discrete extension operators $\mathcal{E}_h^u$ and $\mathcal{E}_h^p$ for the velocity and pressure spaces, resp., and the corresponding aggregated velocity and pressure \ac{fe} spaces  
$\mathcal{V}_h \doteq \mathcal{E}_h^u \circ \mathcal{V}_h^{\texttt{IN}}$ and $
\mathcal{Q}_h \doteq \mathcal{E}_h^p \circ \mathcal{Q}_h^{\texttt{IN}}
$. We assume that the choice of extensions provides an inf-sup stability on $(\mathcal{V}_h,\mathcal{Q}_h)$ (see \cite{Badia2018} for the definition of such operators). We can algebraically express \eqref{eq: weak formulation stokes} as a type-1 saddle-point system of equations
\begin{equation}
    \label{eq: algebraic formulation stokes}
    \begin{bmatrix}
        \bm{A}(\bm{\mu}) & -\bm{B}(\bm{\mu})^T \\ 
        \bm{B}(\bm{\mu}) & \bm{0}
    \end{bmatrix}
    \begin{bmatrix}
        \bm{u}(\bm{\mu}) \\
        \bm{p}(\bm{\mu})
    \end{bmatrix}
    =
    \begin{bmatrix}
        \bm{l}(\bm{\mu}) \\
        \bm{k}(\bm{\mu})
    \end{bmatrix},
\end{equation}
where 
\begin{alignat*}{4}
    \bm{A}(\bm{\mu}) &\in \R^{\mathcal{N}_h^u \times \mathcal{N}_h^u}, \quad 
    \bm{B}(\bm{\mu}) &&\in \R^{\mathcal{N}_h^p \times \mathcal{N}_h^u}, \quad 
    \bm{u}(\bm{\mu}),\bm{l}(\bm{\mu}) &&\in \R^{\mathcal{N}_h^u}, \quad 
    \bm{p}(\bm{\mu}),\bm{k}(\bm{\mu}) &&\in \R^{\mathcal{N}_h^p}.
\end{alignat*}
Here, $\mathcal{N}_h^u(\bm{\mu})$ and $\mathcal{N}_h^p(\bm{\mu})$ represent the number of free \acp{dof} for the velocity and pressure. As before, we introduce the matrices
\begin{equation*}
    \bm{X} \in \R^{\mathcal{N}_h^u \times \mathcal{N}_h^u}, \quad 
    \bm{Y} \in \R^{\mathcal{N}_h^p \times \mathcal{N}_h^p}
\end{equation*}
associated with the bilinear forms 
\begin{equation}
    \label{eq: mu-indep aggregated norm saddlepoint}
    \int_{\widetilde{\Omega}} \bm{\nabla} \widetilde{\vec{u}}_h : \bm{\nabla} \widetilde{\vec{v}}_h d\widetilde{\Omega} + \int_{\widetilde{\Gamma}_D} \tau_h \widetilde{\vec{u}}_h \cdot \widetilde{\vec{v}}_h  d\widetilde{\Gamma}, \quad 
    \int_{\widetilde{\Omega}} \widetilde{p}_h \widetilde{q}_h d\widetilde{\Omega}
\end{equation}
representing the discrete inner products on $\mathcal{V}_h$ and $\mathcal{Q}_h$ in the reference configuration. We also introduce the $\bm{\mu}$-dependent norm matrices  
\begin{equation*}
    \bm{X}(\bm{\mu}) \in \R^{\mathcal{N}_h^u \times \mathcal{N}_h^u}, \quad 
    \bm{Y}(\bm{\mu}) \in \R^{\mathcal{N}_h^p \times \mathcal{N}_h^p}
\end{equation*}
which arise from 
\begin{equation}
    \label{eq: aggregated norm saddlepoint}
    \int_{\Omega(\bm{\mu})} \bm{\nabla} \vec{u}_h : \bm{\nabla} \vec{v}_h d\Omega(\bm{\mu}) + \int_{\Gamma_D(\bm{\mu})} \tau_h \vec{u}_h \cdot \vec{v}_h  d\Gamma(\bm{\mu}), \quad 
    \int_{\Omega(\bm{\mu})} p_h q_h d\Omega(\bm{\mu}).
\end{equation}
For the well-posedness of the problem, we need to show that the aggregated velocity and pressure spaces satisfy a discrete inf-sup condition on the deformed configuration. Before doing so, we state an analogous result on the reference configuration. 
\begin{theorem}
    \label{thm: coupling reference}
    Let $(\mathcal{V}_h,\mathcal{Q}_h)$ be the aggregated \ac{fe} spaces defined above. 
    Moreover, we define
    \begin{align}
        \label{eq: reference coupling quantities} 
        \begin{aligned}
            \widetilde{a}(\widetilde{\vec{u}}_h,\widetilde{\vec{v}}_h) &= \int_{\widetilde{\Omega}} \widetilde{\bm{\nabla}} \widetilde{\vec{u}}_h : \widetilde{\bm{\nabla}} \widetilde{\vec{v}}_h + \int_{\widetilde{\Gamma}_D} \tau_h \widetilde{\vec{u}}_h \cdot \widetilde{\vec{v}}_h - (\widetilde{\bm{\nabla}} \widetilde{\vec{u}}_h \widetilde{\vec{n}} ) \cdot \widetilde{\vec{v}}_h - (\widetilde{\bm{\nabla}} \widetilde{\vec{v}}_h \widetilde{\vec{n}} ) \cdot \widetilde{\vec{u}}_h, \\ 
            \widetilde{b}(\vec{\widetilde{v}}_h,\widetilde{q}_h) &= \int_{\widetilde{\Omega}} \widetilde{q}_h \widetilde{\bm{\nabla}} \cdot \widetilde{\vec{v}}_h - \int_{\widetilde{\Gamma}_D} \widetilde{q}_h \widetilde{\vec{v}}_h \cdot \widetilde{\vec{n}},
        \end{aligned}
    \end{align}
    where $\vec{\widetilde{u}}_h \doteq \vec{u}_h \circ \vec{\psi}^{-1}$, $\vec{\widetilde{v}}_h \doteq \vec{v}_h \circ \vec{\psi}^{-1}$ and $\widetilde{q}_h \doteq q_h \circ \vec{\psi}^{-1}$. Assuming $\eta$ is large enough \cite{Neiva2021}, there exists a positive constant $\beta$ uniform with respect to $h$ such that the following inf-sup condition holds:
    \begin{equation}
        \label{eq: reference inf-sup condition}
        \inf_{(\vec{\widetilde{u}}_h, \widetilde{p}_h) \in \mathcal{V}_h \times \mathcal{Q}_h} \sup_{(\vec{\widetilde{v}}_h, \widetilde{q}_h) \in \mathcal{V}_h \times \mathcal{Q}_h} 
        \frac{\widetilde{a}(\widetilde{\vec{u}}_h,\widetilde{\vec{v}}_h) + \widetilde{b}(\vec{\widetilde{v}}_h,\widetilde{p}_h) - \widetilde{b}(\vec{\widetilde{u}}_h,\widetilde{q}_h) }{\left( \|\vec{\widetilde{u}}_h\|_{\mathcal{V}_h} + \|\widetilde{p}_h\|_{\mathcal{Q}_h} \right) \left( \|\vec{\widetilde{v}}_h\|_{\mathcal{V}_h} + \|\widetilde{q}_h\|_{\mathcal{Q}_h} \right) } \geq \beta.
    \end{equation} 
    \begin{proof}
        We first prove that $\widetilde{a}$ is coercive by using Young's generalized inequality, the discrete trace inequality, the inverse inequality for aggregated spaces \cite{BADIA2018533}, and the assumption that $\eta$ is large enough \cite{Neiva2021}: 
        \begin{align*}
            \widetilde{a}(\widetilde{\vec{u}}_h,\widetilde{\vec{u}}_h) = \int_{\widetilde{\Omega}} \widetilde{\bm{\nabla}} \widetilde{\vec{u}}_h : \widetilde{\bm{\nabla}} \widetilde{\vec{u}}_h + \int_{\widetilde{\Gamma}_D} \tau_h \widetilde{\vec{u}}_h \cdot \widetilde{\vec{u}}_h - 2 (\widetilde{\bm{\nabla}} \widetilde{\vec{u}}_h \widetilde{\vec{n}} ) \cdot \widetilde{\vec{u}}_h \geq \left(1 - \frac{C}{\eta} \right)\| \widetilde{\vec{u}}_h\|_{(H^1(\widetilde{\Omega}))^d}^2 + \frac{\eta h^{-1}}{2} \|\widetilde{\vec{u}}_h\|_{(L^2(\widetilde{\Gamma}_D))^d}^2,
        \end{align*}
        where $C$ is a constant independent of $h$. As a result, we have that
        \begin{equation}
            \label{eq: proof 1 eq 1}
            \widetilde{a}(\widetilde{\vec{u}}_h,\widetilde{\vec{u}}_h) \gtrsim \| \widetilde{\vec{u}}_h\|_{(H^1(\widetilde{\Omega}))^d}^2 + \eta h^{-1} \|\widetilde{\vec{u}}_h\|_{(L^2(\widetilde{\Gamma}_D))^d}^2 \doteq \|\widetilde{\vec{u}}_h\|_{\mathcal{V}_h}^2.
        \end{equation}
        Now, let us split $\widetilde{b}$ into its bulk and boundary contributions:
        \begin{equation*}
            \widetilde{b}(\vec{\widetilde{v}}_h,\widetilde{q}_h) = \widetilde{b}_{\widetilde{\Omega}}(\vec{\widetilde{v}}_h,\widetilde{q}_h) + \widetilde{b}_{\widetilde{\Gamma}}(\vec{\widetilde{v}}_h,\widetilde{q}_h). 
        \end{equation*}
        The discrete inf-sup condition for the aggregated spaces reads as:
        \begin{equation}
            \label{eq: proof 1 eq 2}
            \forall \ \widetilde{q}_h \in \mathcal{Q}_h \ \exists \ \vec{\widetilde{s}}_h \in \mathcal{V}_h : \| \vec{\widetilde{s}}_h \|_{(H^1(\widetilde{\Omega}))^d} = \| \widetilde{q}_h \|_{L^2(\widetilde{\Omega})} \text{  and  } \widetilde{b}_{\widetilde{\Omega}}(\vec{\widetilde{s}}_h,\widetilde{q}_h) \gtrsim \| \widetilde{q}_h \|^2_{L^2(\widetilde{\Omega})}.
        \end{equation}
        Now, using the discrete trace and inverse inequalities, we bound $\widetilde{b}_{\widetilde{\Gamma}}$ as follows:
        \begin{align}
            \label{eq: proof 1 eq 3}
            \widetilde{b}_{\widetilde{\Gamma}}(\vec{\widetilde{v}}_h,\widetilde{q}_h) \lesssim \| \widetilde{q}_h \|_{L^2(\widetilde{\Gamma}_D)} \|\vec{\widetilde{v}}_h\|_{(L^2(\widetilde{\Gamma}_D))^d} \lesssim \|\widetilde{q}_h\|_{L^2(\widetilde{\Omega})}  h^{-1/2} \|\vec{\widetilde{v}}_h\|_{(L^2(\widetilde{\Gamma}_D))^d}, \quad \forall \ (\vec{\widetilde{v}}_h, \widetilde{q}_h) \in \mathcal{V}_h \times \mathcal{Q}_h.
        \end{align}
        Combining \eqref{eq: proof 1 eq 1}, \eqref{eq: proof 1 eq 2} and \eqref{eq: proof 1 eq 3} implies the existence of an $h$-independent $\beta$ such that \eqref{eq: reference inf-sup condition} holds. This concludes the proof.
    \end{proof}
\end{theorem}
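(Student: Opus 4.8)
The plan is to treat \eqref{eq: reference inf-sup condition} as a Brezzi-type ``combined'' inf-sup condition for the Nitsche-stabilized Stokes system posed on the aggregated spaces, and to reduce it to two classical ingredients: (i) coercivity of $\widetilde{a}$ on $\mathcal{V}_h$ with respect to the mesh-dependent norm $\|\widetilde{\vec{v}}_h\|_{\mathcal{V}_h}^2 = \|\widetilde{\vec{v}}_h\|_{(H^1(\widetilde{\Omega}))^d}^2 + \eta h^{-1}\|\widetilde{\vec{v}}_h\|_{(L^2(\widetilde{\Gamma}_D))^d}^2$, and (ii) an inf-sup estimate for the coupling form $\widetilde{b}$ on $\mathcal{V}_h \times \mathcal{Q}_h$. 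With these in hand, the statement follows from a standard test-pair construction. Throughout, the recurring point that must be tracked is that every discrete trace, inverse, and inf-sup constant invoked is independent both of $h$ and of how the boundary $\widetilde{\Gamma}_D$ cuts the background mesh -- precisely the robustness guaranteed by the aggregated \ac{fe} construction of \cite{BADIA2018533}.

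First I would establish the coercivity of $\widetilde{a}$. Testing with $\widetilde{\vec{v}}_h = \widetilde{\vec{u}}_h$, the two symmetric Nitsche consistency terms collapse into $-2(\widetilde{\bm{\nabla}}\widetilde{\vec{u}}_h\widetilde{\vec{n}})\cdot\widetilde{\vec{u}}_h$ on $\widetilde{\Gamma}_D$. Applying Young's generalized inequality to split this term and controlling the boundary gradient contribution by the discrete trace inequality together with the inverse inequality for aggregated spaces (whose constants are cut-independent), the negative part is absorbed into a fixed fraction of the bulk $H^1$-seminorm provided $\eta$ is large enough in the sense of \cite{Neiva2021}. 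This yields $\widetilde{a}(\widetilde{\vec{u}}_h,\widetilde{\vec{u}}_h) \gtrsim \|\widetilde{\vec{u}}_h\|_{\mathcal{V}_h}^2$; a parallel argument, using the same inequalities, also shows $\widetilde{a}$ is continuous in the $\|\cdot\|_{\mathcal{V}_h}$-norm with an $h$-independent constant, which will be needed in the combination step.

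Next I would address the coupling form, writing $\widetilde{b} = \widetilde{b}_{\widetilde{\Omega}} + \widetilde{b}_{\widetilde{\Gamma}}$. For the bulk part, the estimate $\widetilde{b}_{\widetilde{\Omega}}(\widetilde{\vec{s}}_h,\widetilde{q}_h) \gtrsim \|\widetilde{q}_h\|_{L^2(\widetilde{\Omega})}^2$ with $\|\widetilde{\vec{s}}_h\|_{\mathcal{V}_h} \lesssim \|\widetilde{q}_h\|_{\mathcal{Q}_h}$ is inherited from the assumed inf-sup stability of the aggregated Taylor--Hood pair $(\mathcal{V}_h,\mathcal{Q}_h)$ obtained through the extension operators of \cite{Badia2018}. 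For the boundary part, the discrete trace and inverse inequalities give $\widetilde{b}_{\widetilde{\Gamma}}(\widetilde{\vec{v}}_h,\widetilde{q}_h) \lesssim \|\widetilde{q}_h\|_{L^2(\widetilde{\Omega})}\, h^{-1/2}\|\widetilde{\vec{v}}_h\|_{(L^2(\widetilde{\Gamma}_D))^d}$; the $h^{-1/2}$ prefactor is harmless because $\|\widetilde{\vec{v}}_h\|_{(L^2(\widetilde{\Gamma}_D))^d} \leq (\eta h^{-1})^{-1/2}\|\widetilde{\vec{v}}_h\|_{\mathcal{V}_h}$, so in fact $\widetilde{b}_{\widetilde{\Gamma}}(\widetilde{\vec{v}}_h,\widetilde{q}_h) \lesssim \eta^{-1/2}\|\widetilde{q}_h\|_{\mathcal{Q}_h}\|\widetilde{\vec{v}}_h\|_{\mathcal{V}_h}$, with the $h$-dependence cancelling and an $\eta^{-1/2}$ factor to spare.

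Finally I would combine. Given $(\widetilde{\vec{u}}_h,\widetilde{p}_h) \in \mathcal{V}_h \times \mathcal{Q}_h$, I take the test pair $\widetilde{\vec{v}}_h = \widetilde{\vec{u}}_h + \delta \widetilde{\vec{s}}_h$ and $\widetilde{q}_h = \widetilde{p}_h$, where $\widetilde{\vec{s}}_h$ is the supremizer of $\widetilde{p}_h$ from step three; with this choice the $\widetilde{b}(\widetilde{\vec{u}}_h,\widetilde{p}_h)$ contributions cancel and the numerator reduces to $\widetilde{a}(\widetilde{\vec{u}}_h,\widetilde{\vec{u}}_h) + \delta\,\widetilde{a}(\widetilde{\vec{u}}_h,\widetilde{\vec{s}}_h) + \delta\,\widetilde{b}(\widetilde{\vec{s}}_h,\widetilde{p}_h)$. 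Coercivity handles the first term, the bulk inf-sup produces a positive $\delta\,c\,\|\widetilde{p}_h\|_{\mathcal{Q}_h}^2$ contribution, and the cross terms ($\widetilde{a}(\widetilde{\vec{u}}_h,\widetilde{\vec{s}}_h)$, the $\widetilde{b}_{\widetilde{\Gamma}}$ remainder) are absorbed by Young's inequality once $\delta$ is chosen small enough -- with a threshold depending only on the coercivity, continuity, and inf-sup constants, hence independent of $h$. Bounding the denominator from above by $C(\|\widetilde{\vec{u}}_h\|_{\mathcal{V}_h} + \|\widetilde{p}_h\|_{\mathcal{Q}_h})^2$ then yields an $h$-uniform $\beta > 0$. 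I expect the main obstacle to be exactly this bookkeeping: verifying that all constants coming from the trace, inverse, and bulk inf-sup estimates, as well as the resulting coupling parameter $\delta$, are uniform in $h$ and in the cut configuration -- which is where the aggregated \ac{fe} framework is essential -- while the algebraic combination itself is routine.
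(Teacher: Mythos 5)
Your proposal follows essentially the same route as the paper's proof: coercivity of $\widetilde{a}$ in the mesh-dependent norm via Young's inequality and the cut-independent trace/inverse inequalities, the bulk/boundary splitting of $\widetilde{b}$ with the bulk inf-sup inherited from the aggregated spaces, and the $h^{-1/2}$ bound on the boundary term. The paper leaves the final combination as a one-line appeal to the three estimates, whereas you spell out the standard test-pair construction $\widetilde{\vec{v}}_h = \widetilde{\vec{u}}_h + \delta\widetilde{\vec{s}}_h$ and the absorption of the $h^{-1/2}$ factor into the $\eta h^{-1}$-weighted boundary norm -- both correct and consistent with what the paper intends.
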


We can proceed in the same way to prove well-posedness of the \ac{fom} in the deformed configuration, assuming that an analogue of \eqref{eq: proof 1 eq 2} holds on the deformed mesh (see, eg., \cite{boffi2013mixed} for a detailed discussion). However, in the following theorem, we prove stability on the deformed configuration by relying only on the stability on the reference configuration, under \textit{smallness} assumptions on the displacement map. The importance of this result will become evident in Subsection~\ref{subs: supremizers}, where we will use it to establish the inf-sup stability of the \ac{rom}.

\begin{theorem}
    \label{thm: coupling}
    Let \eqref{eq: constants norms} hold. Furthermore, assume that there exists a positive constant $c_K(\bm{\mu})$, bounded away from zero independently of $h$, such that for any active deformed cell $K$, and its corresponding active reference cell $\widetilde{K}$, the following relation holds: 
    \begin{equation}
        \label{eq: coupling equivalence condition}
        \int_K q_h \bm{\nabla} \cdot \vec{v}_h d\Omega(\bm{\mu}) 
        = c_K(\bm{\mu}) \int_{\widetilde{K}} \widetilde{q}_h \widetilde{\bm{\nabla}} \cdot \widetilde{\vec{v}}_h d\widetilde{\Omega}.
    \end{equation}
    Then, if the inf-sup stability condition \eqref{eq: reference inf-sup condition} holds, then there exists a positive constant $\beta(\bm{\mu})$ uniform with respect to $h$ such that the following inf-sup condition is satisfied:
    \begin{equation}
        \label{eq: inf-sup condition}
        \inf_{(\vec{u}_h, p_h) \in \mathcal{V}_h \times \mathcal{Q}_h} \sup_{(\vec{v}_h, q_h) \in \mathcal{V}_h \times \mathcal{Q}_h} 
        \frac{a(\vec{u}_h,\vec{v}_h;\bm{\mu}) + b(\vec{v}_h,p_h;\bm{\mu}) - b(\vec{u}_h,q_h;\bm{\mu}) }{\left( \|\vec{u}_h\|_{\mathcal{V}_h(\bm{\mu})} + \|p_h\|_{\mathcal{Q}_h(\bm{\mu})} \right) \left( \|\vec{v}_h\|_{\mathcal{V}_h(\bm{\mu})} + \|q_h\|_{\mathcal{Q}_h(\bm{\mu})} \right) } \geq \beta(\bm{\mu}).
    \end{equation}
    Here, with a slight abuse of notation, $\|\cdot\|_{\mathcal{V}_h(\bm{\mu})}$, $\|\cdot\|_{\mathcal{Q}_h(\bm{\mu})}$ indicate the velocity and pressure norms in the deformed configuration.
    \begin{proof}
        By virtue of \eqref{eq: constants norms}, we have that 
        \begin{equation}
            \label{eq: proof 2 eq 1}
            \|\widetilde{\vec{v}}_h\|_{\mathcal{V}_h} \simeq \|\widetilde{\vec{v}}_h\|_{\mathcal{V}_h(\bm{\mu})}, \quad 
            \|\widetilde{q}_h\|_{\mathcal{Q}_h} \simeq \|\widetilde{q}_h\|_{\mathcal{Q}_h(\bm{\mu})}, \quad \forall \ (\vec{\widetilde{v}}_h, \widetilde{q}_h) \in \mathcal{V}_h \times \mathcal{Q}_h.
        \end{equation} 
        We prove this result using \eqref{eq: norm to ref norm}-\eqref{eq: ref norm to norm} in the case of the velocity norm; an even simpler argument applies to the pressure. The equivalence constants of the expressions above depend only on the eigenvalues of $\vec{\vec{J}}(\bm{\mu})$. Next, repeating the steps in \eqref{eq: proof 1 eq 1} and \eqref{eq: proof 1 eq 3}, we obtain the coercivity of $a$ and the boundedness of $b_{\Gamma}$, i.e., the boundary contribution of the deformed coupling operator. Then, by exploiting \eqref{eq: coupling equivalence condition} and \eqref{eq: proof 1 eq 2}, we deduce that 
        \begin{equation}
            \label{eq: proof 2 eq 2}
            \forall \ \widetilde{q}_h \in \mathcal{Q}_h \ \exists \ \widetilde{\vec{s}}_h \in \mathcal{V}_h : \| \widetilde{\vec{s}}_h \|_{(H^1(\Omega(\bm{\mu})))^d} = \| \widetilde{q}_h \|_{L^2(\Omega(\bm{\mu}))} \text{  and  } b_{\Omega}(\widetilde{\vec{s}}_h,\widetilde{q}_h;\bm{\mu}) \gtrsim \| \widetilde{q}_h \|^2_{L^2(\Omega(\bm{\mu}))},
        \end{equation}
        where $b_{\Omega}$ denotes the bulk contribution of the deformed coupling operator.
        The coercivity of $a$, the boundedness of $b_{\Gamma}$, and \eqref{eq: proof 2 eq 2} imply the existence of $\beta(\bm{\mu}) > 0$ uniform with respect to $h$ such that
        \begin{equation}
            \label{eq: proof 2 eq 3}
            \inf_{(\vec{\widetilde{u}}_h, \widetilde{p}_h) \in \mathcal{V}_h \times \mathcal{Q}_h} \sup_{(\vec{\widetilde{v}}_h, \widetilde{q}_h) \in \mathcal{V}_h \times \mathcal{Q}_h} 
            \frac{a(\widetilde{\vec{u}}_h,\widetilde{\vec{v}}_h;\bm{\mu}) + b(\vec{\widetilde{v}}_h,\widetilde{p}_h;\bm{\mu}) - b(\vec{\widetilde{u}}_h,\widetilde{q}_h;\bm{\mu}) }{\left( \|\vec{\widetilde{u}}_h\|_{\mathcal{V}_h} + \|\widetilde{p}_h\|_{\mathcal{Q}_h} \right) \left( \|\vec{\widetilde{v}}_h\|_{\mathcal{V}_h} + \|\widetilde{q}_h\|_{\mathcal{Q}_h} \right) } \geq \beta(\bm{\mu}).
        \end{equation} 
        Eqs.~\eqref{eq: proof 2 eq 1} and \eqref{eq: proof 2 eq 3} complete the proof.
    \end{proof}
\end{theorem}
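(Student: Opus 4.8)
The plan is to inherit the deformed-configuration inf-sup \eqref{eq: inf-sup condition} from the reference one \eqref{eq: reference inf-sup condition}, paying only with constants that depend on $\bm{\mu}$ through the Jacobian bounds \eqref{eq: constants norms} and the cell-wise ratios $c_K(\bm{\mu})$ in \eqref{eq: coupling equivalence condition}; under the stated smallness assumptions these stay $O(1)$, so $\beta(\bm{\mu})$ degrades only mildly from $\beta$. The first step is to record the equivalence of the reference and deformed norms on $\mathcal{V}_h$ and $\mathcal{Q}_h$, namely $\|\widetilde{\vec{v}}_h\|_{\mathcal{V}_h}\simeq\|\widetilde{\vec{v}}_h\|_{\mathcal{V}_h(\bm{\mu})}$ and $\|\widetilde{q}_h\|_{\mathcal{Q}_h}\simeq\|\widetilde{q}_h\|_{\mathcal{Q}_h(\bm{\mu})}$, with constants depending only on $C_1(\bm{\mu})$ and $C_d(\bm{\mu})$: for the velocity norm this is the componentwise version of the chain of inequalities already carried out in \eqref{eq: norm to ref norm}--\eqref{eq: ref norm to norm}, and for the pressure norm it is even simpler, since only the volume factor $\mathrm{det}(\vec{\vec{J}}(\bm{\mu}))$ from \eqref{eq: mapped operators} enters and it is pinched between powers of $C_1(\bm{\mu})$ and $C_d(\bm{\mu})$. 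Thanks to this equivalence it suffices to establish \eqref{eq: inf-sup condition} with the reference norms in place of the deformed ones, and then swap norms at the end.

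Next I would re-derive, on the deformed configuration, the two ingredients used in the proof of Theorem~\ref{thm: coupling reference}: coercivity of $a$ and boundedness of the boundary part $b_\Gamma$ of the coupling form. Both follow by repeating the estimates \eqref{eq: proof 1 eq 1} and \eqref{eq: proof 1 eq 3} verbatim, with gradients, measures and normals pulled back through \eqref{eq: mapped operators}; the only change is that the hidden constants now involve the eigenvalue bounds \eqref{eq: constants norms}, while $\eta$ large again yields $a(\vec{u}_h,\vec{u}_h)\gtrsim\|\vec{u}_h\|_{\mathcal{V}_h(\bm{\mu})}^2$ and the discrete trace and inverse inequalities for aggregated spaces give $b_\Gamma(\vec{v}_h,q_h)\lesssim\|q_h\|_{\mathcal{Q}_h(\bm{\mu})}\|\vec{v}_h\|_{\mathcal{V}_h(\bm{\mu})}$.

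The crux is transferring the \emph{bulk} inf-sup. Given $\widetilde{q}_h\in\mathcal{Q}_h$, I would take the reference bulk supremizer $\widetilde{\vec{s}}_h$ provided by \eqref{eq: proof 1 eq 2} and evaluate the deformed bulk coupling form on it. Summing the cell-wise identity \eqref{eq: coupling equivalence condition} over the active cells expresses the deformed bulk form as a $c_K(\bm{\mu})$-weighted sum of the reference cell contributions; using that the weights are uniformly in $h$ bounded away from zero (and, via \eqref{eq: constants norms}, from above), together with \eqref{eq: proof 1 eq 2} and the $\mathrm{det}(\vec{\vec{J}}(\bm{\mu}))$-equivalence of the reference and deformed $L^2$ pressure norms, gives a lower bound $b_\Omega(\widetilde{\vec{s}}_h,\widetilde{q}_h)\gtrsim c(\bm{\mu})\,\|\widetilde{q}_h\|_{L^2(\Omega(\bm{\mu}))}^2$. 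Rescaling $\widetilde{\vec{s}}_h$ so that its $H^1(\Omega(\bm{\mu}))$-norm equals $\|\widetilde{q}_h\|_{L^2(\Omega(\bm{\mu}))}$ — which by the first step only multiplies everything by a $\bm{\mu}$-dependent constant — produces the deformed analogue of \eqref{eq: proof 1 eq 2}.

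Finally I would assemble coercivity of $a$, boundedness of $b_\Gamma$, and the deformed bulk inf-sup exactly as in Theorem~\ref{thm: coupling reference} — testing the saddle-point form at $(\widetilde{\vec{v}}_h,\widetilde{q}_h)=(\widetilde{\vec{u}}_h+\delta\widetilde{\vec{s}}_h,\widetilde{p}_h)$ for a sufficiently small $\delta>0$ and absorbing the resulting cross and boundary terms with Young's inequality — to obtain \eqref{eq: inf-sup condition} with the reference norms, and then pass to the deformed norms via the first step. I expect the bulk step to be the main obstacle: one must be sure the cell-wise constants $c_K(\bm{\mu})$ in \eqref{eq: coupling equivalence condition} are uniformly bounded away from zero (and above) so that the weighted sum inherits the reference lower bound, and tracking how $\beta(\bm{\mu})$ deteriorates in terms of $C_1(\bm{\mu})$, $C_d(\bm{\mu})$ and $c(\bm{\mu})$ — and checking that small deformations keep all of these close to one — is the part that requires genuine care; everything else is a transcription of the reference-configuration argument with extra Jacobian weights.
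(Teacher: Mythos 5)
Your proposal follows essentially the same route as the paper's proof: norm equivalence of the reference and deformed $\mathcal{V}_h$ and $\mathcal{Q}_h$ norms from \eqref{eq: constants norms}, transfer of the coercivity of $a$ and the boundedness of $b_{\Gamma}$ by repeating \eqref{eq: proof 1 eq 1} and \eqref{eq: proof 1 eq 3}, transfer of the bulk inf-sup condition \eqref{eq: proof 1 eq 2} to the deformed configuration via the cell-wise identity \eqref{eq: coupling equivalence condition}, and a final Brezzi-type assembly followed by swapping back to the deformed norms. You are in fact slightly more explicit than the paper on the two delicate points -- the need for the weights $c_K(\bm{\mu})$ to be uniformly controlled so that the weighted sum of possibly sign-indefinite cell contributions inherits the reference lower bound, and the Young-inequality argument behind the assembly -- but the structure and the key ideas coincide.
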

We note that \eqref{eq: coupling equivalence condition} can be interpreted as a smallness assumption on the displacement map, which is easy to check by observing that assumption \eqref{eq: coupling equivalence condition} holds for the unit constant when the displacement map is zero. Under the stated assumptions, Thm.~\ref{thm: coupling} therefore offers a practical criterion for assessing the stability of the problem in the deformed configuration based on that of the reference configuration.

\subsection{Reduced order model}
\label{subs: saddle-point reduced order model}
The first step is to generate two separate \ac{rb} reduced subspaces for velocity and pressure. To this end, we first define the velocity and pressure snapshots matrices
\begin{equation*}
    \bm{U} = \left[\bm{u}(\bm{\mu}_1) | \hdots | \bm{u}(\bm{\mu}_{N_{\mu}})\right] \in \R^{\mathcal{N}^u_h \times N_{\mu}}, \quad 
    \bm{P} = \left[\bm{p}(\bm{\mu}_1) | \hdots | \bm{p}(\bm{\mu}_{N_{\mu}})\right] \in \R^{\mathcal{N}^p_h \times N_{\mu}}.
\end{equation*}
We then run Alg.~\ref{alg: tpod} separately for velocity and pressure, thus yielding two bases 
\begin{equation}
    \bm{\Phi}^u \in \R^{\mathcal{N}_h^u \times n^u}, \quad 
    \bm{\Phi}^p \in \R^{\mathcal{N}_h^p \times n^p}
\end{equation}
that are $\bm{X}$ and $\bm{Y}$-orthogonal, respectively. Consequently, we can express the Galerkin projection of \eqref{eq: algebraic formulation stokes} onto the space 
\begin{equation*}
    \mathrm{span}\{\bm{\Phi}^u\} \times \mathrm{span}\{\bm{\Phi}^p\}, 
\end{equation*}
as follows: find $\left(\bm{u}_n(\bm{\mu}),\bm{p}_n(\bm{\mu})\right) \in \R^{n^u} \times \R^{n^p}$ such that: 
\begin{equation}
    \label{eq: stokes reduced equations}
    \begin{split}
        \begin{bmatrix}
            \bm{\Phi}^u \\ 
            \bm{\Phi}^p
        \end{bmatrix}^T
        \begin{bmatrix}
            \bm{A}(\bm{\mu})  & -\bm{B}(\bm{\mu})^T \\ 
            \bm{B}(\bm{\mu}) & \bm{0}
        \end{bmatrix}
        \begin{bmatrix}
            \bm{\Phi}^u\bm{u}_n(\bm{\mu}) \\ 
            \bm{\Phi}^p\bm{p}_n(\bm{\mu})
        \end{bmatrix}
        &= 
        \begin{bmatrix}
            \bm{\Phi}^u \\ 
            \bm{\Phi}^p
        \end{bmatrix}^T
        \begin{bmatrix}
            \bm{l}(\bm{\mu}) \\ 
            \bm{k}(\bm{\mu})
        \end{bmatrix}
        \\ \Longleftrightarrow
        \begin{bmatrix}
            \bm{A}_n(\bm{\mu})  & -\bm{B}_n(\bm{\mu})^T \\ 
            \bm{B}_n(\bm{\mu}) & \bm{0}
        \end{bmatrix}
        \begin{bmatrix}
            \bm{u}_n(\bm{\mu}) \\ 
            \bm{p}_n(\bm{\mu})
        \end{bmatrix}
        &= 
        \begin{bmatrix}
            \bm{l}_n(\bm{\mu}) \\ 
            \bm{k}_n(\bm{\mu})
        \end{bmatrix}.
    \end{split}
\end{equation}
As noted in Subsection \ref{subs: reduced equations}, the reduced system in \eqref{eq: stokes reduced equations} must be hyper-reduced to eliminate the dependence on \ac{hf} quantities. For this purpose, it suffices to run \ac{mdeim} for each \ac{lhs} and \ac{rhs} block. Once this is done, we may write the hyper-reduced equations
\begin{equation}
    \label{eq: stokes rb algebraic}
    \begin{bmatrix}
        \widebar{\bm{A}}_n(\bm{\mu})  & -\widebar{\bm{B}}_n(\bm{\mu})^T \\ 
        \widebar{\bm{B}}_n(\bm{\mu}) & \bm{0}
    \end{bmatrix}
    \begin{bmatrix}
        \bm{u}_n(\bm{\mu}) \\ 
        \bm{p}_n(\bm{\mu})
    \end{bmatrix}
    = 
    \begin{bmatrix}
        \widebar{\bm{l}}_n(\bm{\mu}) \\ 
        \widebar{\bm{k}}_n(\bm{\mu})
    \end{bmatrix},
\end{equation}
where the quantities 
\begin{equation*}
    \widebar{\bm{A}}_n(\bm{\mu}) \in \R^{n^u \times n^u}, \quad 
    \widebar{\bm{B}}_n(\bm{\mu}) \in \R^{n^p \times n^u}, \quad 
    \widebar{\bm{l}}_n(\bm{\mu}) \in \R^{n^u}, \quad 
    \widebar{\bm{k}}_n(\bm{\mu}) \in \R^{n^p}
\end{equation*}
are defined similarly to \eqref{eq: hypred quantities}. 

The system in \eqref{eq: stokes rb algebraic} constitutes the reduced equation for the steady, incompressible Stokes benchmark. However, as noted in \cite{ballarin2015supremizer}, the inf-sup stability of the operator $\bm{B}(\bm{\mu})$ does not guarantee the same for $\bm{B}_n(\bm{\mu})$. A trivial counterexample is the case where $n^p > n^u$, which violates the surjectivity of $\bm{B}_n(\bm{\mu})$ -- a necessary condition for the inf-sup stability. For this reason, a stabilization procedure is required to ensure the well-posedness of \eqref{eq: stokes rb algebraic}, which we describe in the next subsection.

\subsection{Supremizer enrichment}
\label{subs: supremizers}
The supremizer enrichment proposed in~\cite{ballarin2015supremizer} is arguably the most commonly used technique for ensuring the well-posedness of saddle-point problems in \ac{rom} applications. This approach consists, essentially, of augmenting the velocity \ac{rb} space with the \textit{supremizing velocity} corresponding to a given pressure mode $\bm{q} \in \R^{\mathcal{N}_h^p}$. By virtue of Thm.~\ref{thm: coupling}, this augmentation can be done by working on the reference configuration (under the assumption stated in the theorem). The supremizing velocity is none other than the function $\widetilde{\vec{s}}_h$ introduced in \eqref{eq: proof 1 eq 2}, and may algebraically be computed as 
\begin{equation}
    \label{eq: exact supremizer}
    \forall \ \bm{q} \in \R^{\mathcal{N}_h^p}, \ \bm{s} = \bm{X}^{-1/2} \bm{B_{\Omega}}^T \bm{q} = \arg\sup_{\bm{v} \in \R^{\mathcal{N}_h^u}} \frac{\bm{v}^T \bm{B_{\Omega}} \bm{q}}{\|\bm{v}\|_{\bm{X}} \|\bm{q}\|_{\bm{Y}}} \in \R^{\mathcal{N}_h^u}.
\end{equation}
Here, $\bm{B_{\Omega}} \in \R^{\mathcal{N}_h^p \times \mathcal{N}_h^u}$ is the matrix associated with $\widetilde{b}_{\widetilde{\Omega}}$, i.e. the bulk contribution of the coupling operator on the reference configuration. Adding $\bm{s}$ to the velocity basis, followed by an orthogonalization step
\begin{equation*}
    \bm{\Phi}^u \leftarrow \texttt{Gram-Schmidt}\left([\bm{\Phi}^u,\bm{s}], \bm{X}\right) \in \R^{\mathcal{N}_h^u \times (n^u+1)}
\end{equation*}
is what constitutes the supremizer enrichment process. Since pressure modes in the reduced model are linear combinations of the columns of $\bm{\Phi}^p$, we may simply solve \eqref{eq: exact supremizer} for each pressure basis vector and perform the enrichment. As explained in detail in \cite{ballarin2015supremizer}, this is enough to guarantee the surjectivity of $\bm{B}_n$ -- the reduced coupling operator on the reference configuration -- following the enrichment. We summarize the enrichment procedure in Alg.~\ref{alg: supremizer enrichment}. \\ 
Under the assumptions of Thm.~\ref{thm: coupling}, it follows that $\bm{B}_n(\bm{\mu})$ is surjective (i.e., the reduced problem \eqref{eq: stokes reduced equations} is inf-sup stable), as established in the following theorem.
\begin{theorem}
    \label{thm: reduced coupling}
    Let $\bm{\Phi}^u$ be the enriched velocity basis, computed according to Alg.~\ref{alg: supremizer enrichment}. If \eqref{eq: coupling equivalence condition} holds, the Galerkin projection of \eqref{eq: weak formulation stokes} onto the reduced space $\mathrm{span}\{\bm{\Phi}^u\} \times \mathrm{span}\{\bm{\Phi}^p\}$ satisfies the inf-sup stability condition \eqref{eq: inf-sup condition} restricted to these spaces. 
\end{theorem}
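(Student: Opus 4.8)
The plan is to replay the proof of Theorem~\ref{thm: coupling} with all infima and suprema restricted to the reduced subspaces, the only ingredient not inherited by a subspace being the inf-sup stability of the coupling operator, which is precisely what the supremizer enrichment is designed to restore. By the classical Brezzi theory for saddle-point problems (see, e.g., \cite{boffi2013mixed}), to obtain \eqref{eq: inf-sup condition} restricted to $\mathrm{span}\{\bm{\Phi}^u\}\times\mathrm{span}\{\bm{\Phi}^p\}$ it is enough to verify (i) coercivity of $a$ on $\mathrm{span}\{\bm{\Phi}^u\}$ in the deformed velocity norm, and (ii) a discrete inf-sup bound for $b$ between the two reduced spaces in the deformed norms, both with $h$-independent constants. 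Item (i) is immediate: $\mathrm{span}\{\bm{\Phi}^u\}\subseteq\mathcal{V}_h$, and $a$ is coercive on all of $\mathcal{V}_h$ in the deformed norm — this is established in the proof of Theorem~\ref{thm: coupling} by repeating the argument of \eqref{eq: proof 1 eq 1} — so the bound descends to the subspace with the same constant. For (ii) I would split $b=b_{\Omega}+b_{\Gamma}$ into bulk and boundary parts; $b_{\Gamma}$ is bounded on the reduced spaces because the continuity estimate \eqref{eq: proof 1 eq 3} (and its deformed analogue used in the proof of Theorem~\ref{thm: coupling}) holds verbatim on any subspace. The matter therefore reduces to a bulk inf-sup restricted to the reduced spaces: for every $p_h\in\mathrm{span}\{\bm{\Phi}^p\}$ there must exist $\vec{v}_h\in\mathrm{span}\{\bm{\Phi}^u\}$ with $b_{\Omega}(\vec{v}_h,p_h)\gtrsim\|p_h\|_{\mathcal{Q}_h(\bm{\mu})}^2$ and $\|\vec{v}_h\|_{\mathcal{V}_h(\bm{\mu})}\lesssim\|p_h\|_{\mathcal{Q}_h(\bm{\mu})}$.

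The candidate for $\vec{v}_h$ is the supremizing velocity of the nodal vector $\bm{p}$ of $p_h$, namely the maximizer $\bm{s}$ of \eqref{eq: exact supremizer}. The decisive point is that $\bm{s}\in\mathrm{span}\{\bm{\Phi}^u\}$: the supremizer map $\bm{p}\mapsto\bm{s}$ is linear, Algorithm~\ref{alg: supremizer enrichment} appends to the velocity basis the supremizer of \emph{each} column of $\bm{\Phi}^p$, and the ensuing orthogonalization does not alter the span; hence the supremizer of any linear combination of pressure modes already lies in the enriched velocity space. Since $\bm{s}$ maximizes the bulk quotient over all of $\R^{\mathcal{N}_h^u}$, it does so a fortiori over the subspace $\mathrm{span}\{\bm{\Phi}^u\}$ that contains it, so the supremum of the bulk quotient over $\mathrm{span}\{\bm{\Phi}^u\}$ coincides with the full-order one; the reference bulk inf-sup of the proof of Theorem~\ref{thm: coupling reference} (cf.\ \eqref{eq: proof 1 eq 2}) then bounds it from below by $\|p_h\|_{\mathcal{Q}_h}$ up to an $h$-independent constant. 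It remains to transfer this bound from the reference to the deformed configuration, which is done exactly as in the passage from \eqref{eq: proof 1 eq 2} to \eqref{eq: proof 2 eq 2} in the proof of Theorem~\ref{thm: coupling}: the cell-wise identity \eqref{eq: coupling equivalence condition} with the lower bound $c_K(\bm{\mu})\geq c(\bm{\mu})>0$ converts the reference bulk form into the deformed one up to the factor $c(\bm{\mu})$, while the norm equivalences \eqref{eq: norm to ref norm}--\eqref{eq: ref norm to norm}, which rest on \eqref{eq: constants norms}, convert reference norms into deformed ones at the price of constants depending only on the eigenvalues of $\vec{\vec{J}}(\bm{\mu})$. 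Combining this restricted bulk inf-sup with the coercivity of $a$ and the boundedness of $b_{\Gamma}$, the final assembly is identical to the deduction of \eqref{eq: proof 2 eq 3} at the end of the proof of Theorem~\ref{thm: coupling}, and produces an $h$-independent $\beta(\bm{\mu})>0$ for which \eqref{eq: inf-sup condition}, restricted to $\mathrm{span}\{\bm{\Phi}^u\}\times\mathrm{span}\{\bm{\Phi}^p\}$, holds.

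The main obstacle is the span argument above: the whole proof hinges on the supremizer of an \emph{arbitrary} reduced pressure mode belonging to the enriched velocity space, which in turn rests on the linearity of the algebraic supremizer operator and on the span-preserving nature of the Gram--Schmidt step. Once that is in hand, the remainder is a mechanical restriction to subspaces of the arguments already carried out for Theorems~\ref{thm: coupling reference} and~\ref{thm: coupling}, together with the bookkeeping of the reference-to-deformed operator and norm constants; the only genuinely new element relative to the full-order analysis is precisely that the enrichment makes the coupling inf-sup survive the Galerkin projection, and that this enrichment may be carried out on the reference configuration — a simplification justified by \eqref{eq: coupling equivalence condition} and Theorem~\ref{thm: coupling}.
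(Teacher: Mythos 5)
Your proposal is correct and follows essentially the same route as the paper: establish the reduced inf-sup in the reference configuration by noting that the supremizer of any reduced pressure mode lies in the enriched velocity space (so the bulk inf-sup of \eqref{eq: proof 1 eq 2} survives the Galerkin restriction), then transport stability to the deformed configuration via Thm.~\ref{thm: coupling} under assumption \eqref{eq: coupling equivalence condition}. Your write-up is in fact more explicit than the paper's -- in particular the linearity-of-the-supremizer-map and span-preservation-of-Gram--Schmidt argument is only implicit there -- but the underlying proof is the same.
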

\begin{proof} 
    The proof relies on Thm.~\ref{thm: coupling reference} to prove stability in the reference configuration of the \ac{rom}. The key step is the possiblity to choose $\vec{\widetilde{s}}_h \in \mathrm{span}\{\bm{\Phi}^u\}$ that satisfies \eqref{eq: proof 1 eq 2}, for any $\widetilde{q}_h \in \mathrm{span}\{\bm{\Phi}^p\}$. Here, $\vec{\widetilde{s}}_h$ is none other than the supremizer corresponding to $\widetilde{q}_h$. As a result, the supremizer enrichment procedure guarantees the inf-sup stability of the reduced problem in the reference configuration. Finally, we can \emph{transport} the stability to the deformed configuration using Thm.~\ref{thm: coupling}. 
\end{proof}

Lastly, we note that assumption \eqref{eq: coupling equivalence condition} can readily be checked in practice. In particular, the constant can be tracked for every deformed mesh to assess the validity of the assumption. In principle, one could also extend the localization procedure in Algs.~\ref{alg: offline phase}-\ref{alg: online phase} to include the definition of (local) reference \ac{fe} spaces, ensuring that the displacement map never becomes too large. This option, however, is not further explored in the present work.

\begin{algorithm}
    \caption{\texttt{ENRICHMENT}: Given the velocity basis $\bm{\Phi}^u \in \R^{\mathcal{N}_h^u \times n^u}$, the pressure basis $\bm{\Phi}^p \in \R^{\mathcal{N}_h^p \times n^p}$, the reference velocity norm matrix $\bm{X} \in \R^{\mathcal{N}_h^u \times \mathcal{N}_h^u}$, and the bulk reference coupling matrix $\bm{B_{\Omega}} \in \R^{\mathcal{N}_h^p \times \mathcal{N}_h^u}$, return the $\mathcal{N}_h^u \times n^u+n^p$ enriched velocity basis.} 
    \begin{algorithmic}[1]
        \Function{\texttt{ENRICHMENT}}{$\bm{\Phi}^u,\bm{\Phi}^p,\bm{X},\bm{B_{\Omega}}$}
            \State Cholesky factorization: $\bm{H}^T\bm{H} = \texttt{Cholesky}\left(\bm{X}^u\right)$ \Comment{$\bm{H} \in \R^{\mathcal{N}_h^u \times \mathcal{N}_h^u}$} 
            \State Compute supremizers: $\bm{S} = \bm{H}^{-1} \bm{B_{\Omega}}^T \bm{\Phi}^p$ \Comment{$\bm{S} \in \R^{\mathcal{N}_h^u \times n^p}$} 
            \State Supremizer enrichment: $\bm{\Phi}^u \leftarrow \texttt{Gram-Schmidt}\left([\bm{\Phi}^u,\bm{S}], \bm{X}\right)$ \Comment{$\bm{\Phi}^u \in \R^{\mathcal{N}_h^u \times (n^u+n^p)}$} 
            \State \Return $\bm{\Phi}^u$
        \EndFunction
    \end{algorithmic}
    \label{alg: supremizer enrichment}
\end{algorithm}

\section{Numerical results}
\label{sec: results}

In this section, we assess the numerical performance of the proposed \acp{rom} across several tests. We consider four benchmark problems: a 2D Poisson equation, a 3D linear elasticity problem, the 3D Stokes equations, and the 3D Navier-Stokes equations. Their respective weak formulations are detailed in Appendix~\ref{sec: appendix A}. The first two problems are discretized using $\mathcal{Q}_2$ Lagrangian elements and solved with \ac{ttrb}, while the latter employ inf-sup stable $\mathcal{Q}_2$-$\mathcal{Q}_1$ pairs and are solved with a standard \ac{tpod}-\ac{mdeim} procedure\footnote{This choice is primarily motivated by the absence of a straightforward procedure for implementing an efficient supremizer enrichment when the velocity bases are represented in the \ac{tt} format. The main difficulty lies in the fact that neither the velocity norm matrix $\bm{X}$ nor the coupling operator $\bm{B_{\Omega}}$ used in Alg.~\ref{alg: supremizer enrichment} can be expressed in a form that preserves the \ac{tt} structure of the quantities involved -- namely, the pressure basis and the supremizers to be incorporated into the \ac{tt} velocity basis. Achieving this would require all these objects to be written as rank-$K$ tensors, as extensively discussed in \cite{MUELLER2026116790}. In the present setting, however, this is not feasible, as our study involves unfitted discretizations on parameterized domains. We refer to \cite{MUELLER2026116790} for additional discussion and examples illustrating how operations between \ac{tt}-formatted quantities can be carried out efficiently while avoiding expensive manipulations of their full tensor representations. Consequently, the only viable approach would be to convert the bases into full matrix format, perform the standard supremizer enrichment, and then convert the enriched bases back into \ac{tt} format. Although possible, this procedure is computationally unattractive. For these reasons, the \ac{tpod}-\ac{mdeim} approach emerges as the more natural and efficient strategy in the present context.}. The offline and online parameter sets are disjoint: offline realizations are generated using a Halton sequence \cite{PHARR2017401}, whereas the online set $\mathcal{D}_{\texttt{ON}}$ is sampled uniformly from $\mathcal{D}$. For each benchmark, we take $|\mathcal{D}_{\texttt{ON}}| = 10$. We compare the error and cost reduction of the methods against the \ac{hf} solutions for tolerances $\varepsilon \in \{10^{-2}, 10^{-3}, 10^{-4}\}$. Following the recommendation in \cite{quarteroni2015reduced}, the hyper-reduction tolerance is chosen to be $100$ times smaller than that used for constructing the reduced subspace. In evaluating the accuracy of the methods, we verified that the hyper-reduction error is indeed negligible compared to the projection error, as expected. The accuracies of the last two benchmarks are measured using
\begin{equation}
\label{eq: error definition}
    E^u = \frac{1}{|\mathcal{D}_{\texttt{ON}}|}\overset{|\mathcal{D}_{\texttt{ON}}|}{\underset{i=1}{\sum}}\dfrac{\norm{\bm{\Phi}^u\bm{u}_n(\bm{\mu}_i) - \bm{u}(\bm{\mu}_i)}_{\bm{X}(\bm{\mu}_i)}}{\norm{\bm{u}(\bm{\mu}_i)}_{\bm{X}(\bm{\mu}_i)}}, \quad 
    E^p = \frac{1}{|\mathcal{D}_{\texttt{ON}}|}\overset{|\mathcal{D}_{\texttt{ON}}|}{\underset{i=1}{\sum}}\dfrac{\norm{\bm{\Phi}^p\bm{p}_n(\bm{\mu}_i) - \bm{p}(\bm{\mu}_i)}_{\bm{Y}(\bm{\mu}_i)}}{\norm{\bm{p}(\bm{\mu}_i)}_{\bm{Y}(\bm{\mu}_i)}},
\end{equation}
with only $E^u$ being relevant in the first two tests. Cost reduction is quantified through (i) \textit{speedup}, defined as the ratio of \ac{hf} cost -- measured in wall time or memory usage -- to the corresponding \ac{rom} cost, and (ii) \textit{reduction factor}, measured as the ratio between the average \ac{fom} dimension and the dimension of the largest local subspace. All simulations are performed on a local machine equipped with 66~\si{GB} RAM and an Intel Core i7 processor at 3.40~\si{GHz}, using our \ac{rom} library \texttt{GridapROMs.jl} \cite{muellerbadiagridaproms} implemented in Julia.

\subsection{2D Poisson equation}
\label{subs: poisson}
In this subsection, we solve a 2D Poisson equation on a square with a parameter-dependent circular hole:
\begin{equation}
    \Omega(\bm{\mu}) \doteq [-1,1]^2 \setminus \mathcal{B}_{(\mu_1,\mu_1)}(\mu_2), 
\end{equation}
with $\bm{\mu} = (\mu_1,\mu_2) \in \mathcal{D} \doteq [-0.4,0.6] \times [0.25,0.35]$. The strong formulation of the problem reads as: 
\begin{align} 
    \label{eq: poisson}
    \left\{
    \begin{aligned}
        -\Delta u_h(\bm{\mu}) &= f(\bm{\mu}) &&\text{in } \Omega(\bm{\mu}), \\ 
        u_h(\bm{\mu}) &= u_D(\bm{\mu}) &&\text{on } \Gamma_D(\bm{\mu}), \\ 
        \dfrac{\partial u_h(\bm{\mu})}{\partial \vec{n}} &= u_N(\bm{\mu}) &&\text{on }  \Gamma_N(\bm{\mu}). \\
    \end{aligned} 
    \right.
\end{align}
The boundary and forcing terms are defined as:
\begin{itemize}
    \item Homogeneous Dirichlet conditions on the left face of the square and on the hole boundary.
    \item Homogeneous Neumann conditions on the remaining boundaries.
    \item Forcing term $f(\bm{\mu}) = (x,y) \mapsto 2xy$.
\end{itemize}
For the spatial discretization, we set $h = 0.05$ and use $N_{\mu} = 200$ offline samples for both reduced subspace construction and hyper-reduction. At this resolution, the \ac{fom} dimension is $6043$, requiring on average $0.04$\si{s} and $28.68$\si{Mb} per solution. We generate $N_c = N_c^h = 16$ clusters for the local subspaces and hyper-reduction quantities (see Alg.~\ref{alg: offline phase}). We note that both the number of offline parameters and the number of clusters are twice those used in the subsequent tests, owing to the more challenging parameterization in this benchmark. As shown in the following subsections, later examples involve varying two coordinates of the circle's center, which appears to be easier to capture than the present setting. The values adopted here follow the setup of \cite{CHASAPI2023115997}, where the authors used exactly the same choices for a closely related test case. \\ 
Tab.~\ref{tb: results poisson} reports the results for this test. The error of the \ac{ttrb} approximation decreases with the tolerance, but with a multiplicative constant of order $\mathcal{O}(10)$. As noted earlier, this behavior is typical of \ac{ttrb}, where the relative error often surpasses the theoretical $\sqrt{d}$ bound. Moreover, since this problem is highly non-reducible according to the definitions in \cite{quarteroni2015reduced,Unger_2019}, large multiplicative constants are expected regardless of the \ac{rb} procedure. For comparison, the same error measure obtained by a \ac{tpod}-\ac{mdeim} scheme for the same online parameters is
\begin{equation*}
    \text{E} / \varepsilon = [0.53,2.33,11.48].
\end{equation*}
Overall, the two methods exhibit comparable convergence rates. In terms of computational cost, no significant speedup is observed over the \ac{hf} simulations. Two main factors contribute to this: (i)~the problem size is too small to yield substantial speedups, as evidenced by the already low average runtime and memory requirements of the \ac{hf} solver; and (ii)~the online cost is dominated by the \ac{mdeim} stage, due to the relatively large number of cells selected in the reduced integration routine. For such small problems, this effect eliminates any potential speedup, despite the substantial reduction factor achieved. Replacing \ac{mdeim} with the \ac{rbf}-based hyper-reduction used in \cite{CHASAPI2023115997} -- formally presented in Appendix~\ref{sec: appendix B} for completeness -- does not resolve the issue, as shown in Tab.~\ref{tb: results poisson rbf}. Although this approach yields substantial speedups, the accuracy deteriorates significantly, even for the more accurate \ac{tpod}-based method -- see Appendix~\ref{sec: appendix B} for a detailed discussion. Interestingly, \cite{CHASAPI2023115997} reports good results with \acp{rbf}, though their setup uses far more offline snapshots. Apart from this difference, we suspect that enforcing Dirichlet conditions on the deformed boundary -- handled here via Nitsche’s method -- acts as a major obstacle for \acp{rbf}. The presence of the penalty term significantly increases the complexity of the manifold of \acp{lhs} (and \acp{rhs} for nonhomogeneous conditions). The numerical tests in \cite{CHASAPI2023115997} do not include this case, which may explain the disparity in accuracy.

\begin{table}[t]
    \begin{tabular}{ccccc} 
        \toprule
    
        &\multicolumn{1}{c}{\textbf{E} / $\bm{\varepsilon}$}
        &\multicolumn{1}{c}{\textbf{SU-WT}}
        &\multicolumn{1}{c}{\textbf{SU-MEM}}
        &\multicolumn{1}{c}{\textbf{RF}}\\
        
        \cmidrule{1-5}
    
        \multirow{1}{*}[0.1cm]{}
        $\bm{\varepsilon = 10^{-2}}$ &$18.64$ &$1.57$ &$0.54$ & $1510$ \\
    
        \cmidrule{1-5}
    
        \multirow{1}{*}[0.15cm]{}
        $\bm{\varepsilon = 10^{-3}}$ &$41.12$ &$1.50$ &$0.54$ & $1007$ \\

        \cmidrule{1-5}

        \multirow{1}{*}[0.15cm]{}
        $\bm{\varepsilon = 10^{-4}}$ &$83.71$ &$1.50$ &$0.54$ & $671$ \\
        \bottomrule 
    \end{tabular}
    \caption{Results for the Poisson equation benchmark, obtained with \ac{ttrb}. Metrics include: average accuracy in the energy norm ($E$); average computational speedup in terms of wall time (SU-WT); average computational speedup in terms of memory usage (SU-MEM); and reduction factor (RF). Results compare the performance of \ac{ttrb} relative to the \ac{hf} simulations.}
    \label{tb: results poisson}
\end{table}

\begin{table}[t]
    \begin{tabular}{ccccc} 
        \toprule
    
        &\multicolumn{1}{c}{\textbf{E} / $\bm{\varepsilon}$}
        &\multicolumn{1}{c}{\textbf{SU-WT}}
        &\multicolumn{1}{c}{\textbf{SU-MEM}}
        &\multicolumn{1}{c}{\textbf{RF}}\\
        
        \cmidrule{1-5}
    
        \multirow{1}{*}[0.1cm]{}
        $\bm{\varepsilon = 10^{-2}}$ &$9.27$ &$1449$ &$5402$ & $1510$ \\
    
        \cmidrule{1-5}
    
        \multirow{1}{*}[0.15cm]{}
        $\bm{\varepsilon = 10^{-3}}$ &$91.25$ &$1181$ &$3913$ & $1007$ \\

        \cmidrule{1-5}

        \multirow{1}{*}[0.15cm]{}
        $\bm{\varepsilon = 10^{-4}}$ &$919.27$ &$1162$ &$2312$ & $671$ \\
        \bottomrule 
    \end{tabular}
    \caption{Results for the Poisson equation benchmark, obtained with \ac{tpod} for the generation of the subspaces, and \acp{rbf} for the hyper-reduction. Metrics include: average accuracy in the energy norm ($E$); average computational speedup in terms of wall time (SU-WT); average computational speedup in terms of memory usage (SU-MEM); and reduction factor (RF). Results compare the performance of \ac{ttrb} relative to the \ac{hf} simulations.}
    \label{tb: results poisson rbf}
\end{table}

\begin{figure}[t]
    \centering
    \hspace*{-0.5cm}     
    \begin{tikzpicture}[x=1cm,y=1cm]
        \node[anchor=south west, inner sep=0] (img1) at (0,0)
            {\includegraphics[width=0.4\textwidth]{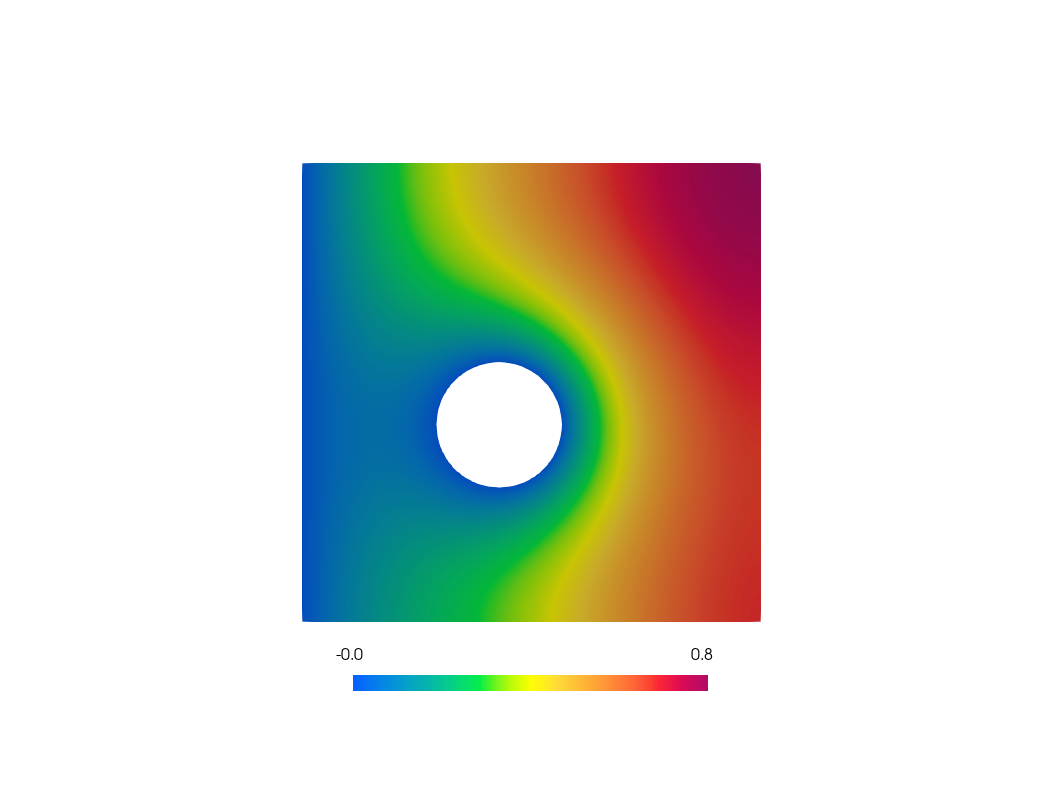}};
        \node at ($(img1.north) - (0.0,0.6)$) {$\bm{u}(\bm{\mu})$};
    
        \node[anchor=south west, inner sep=0] (img2) at (3.9,0)
            {\includegraphics[width=0.4\textwidth]{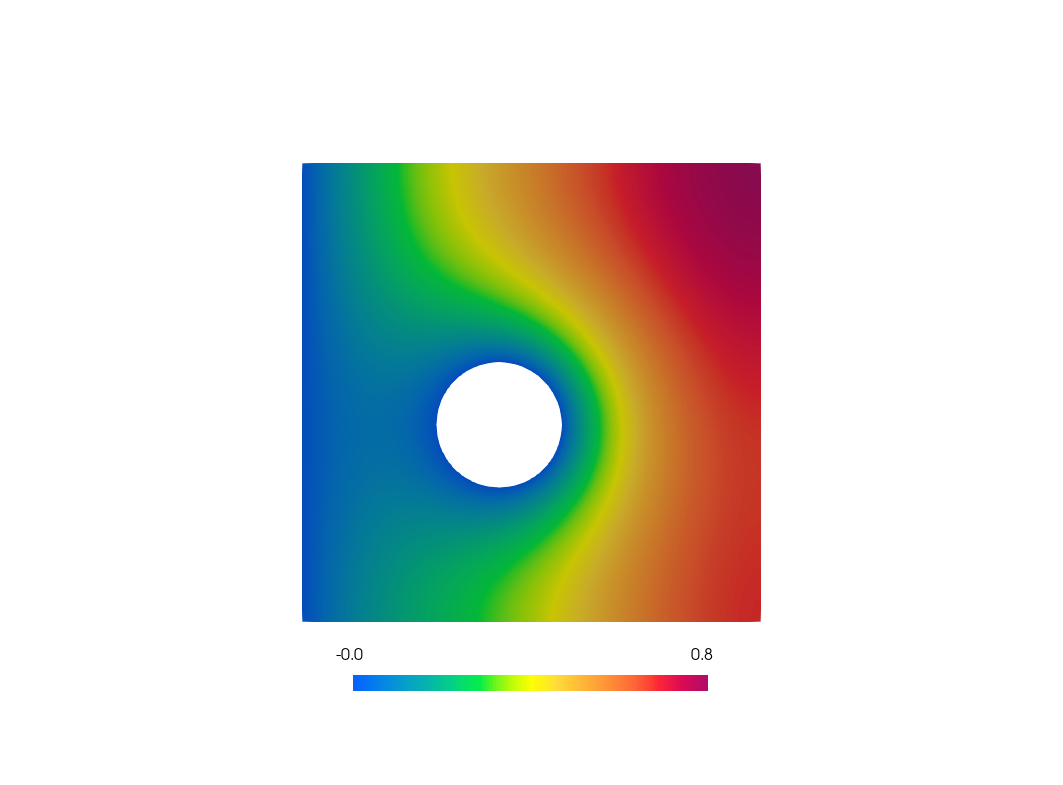}};
        \node at ($(img2.north) - (0.0,0.6)$) {$\bm{\Phi}\bm{u}_n(\bm{\mu})$, $\varepsilon = 10^{-4}$};

        \node[anchor=south west, inner sep=0] (img3) at (7.8,0)
            {\includegraphics[width=0.4\textwidth]{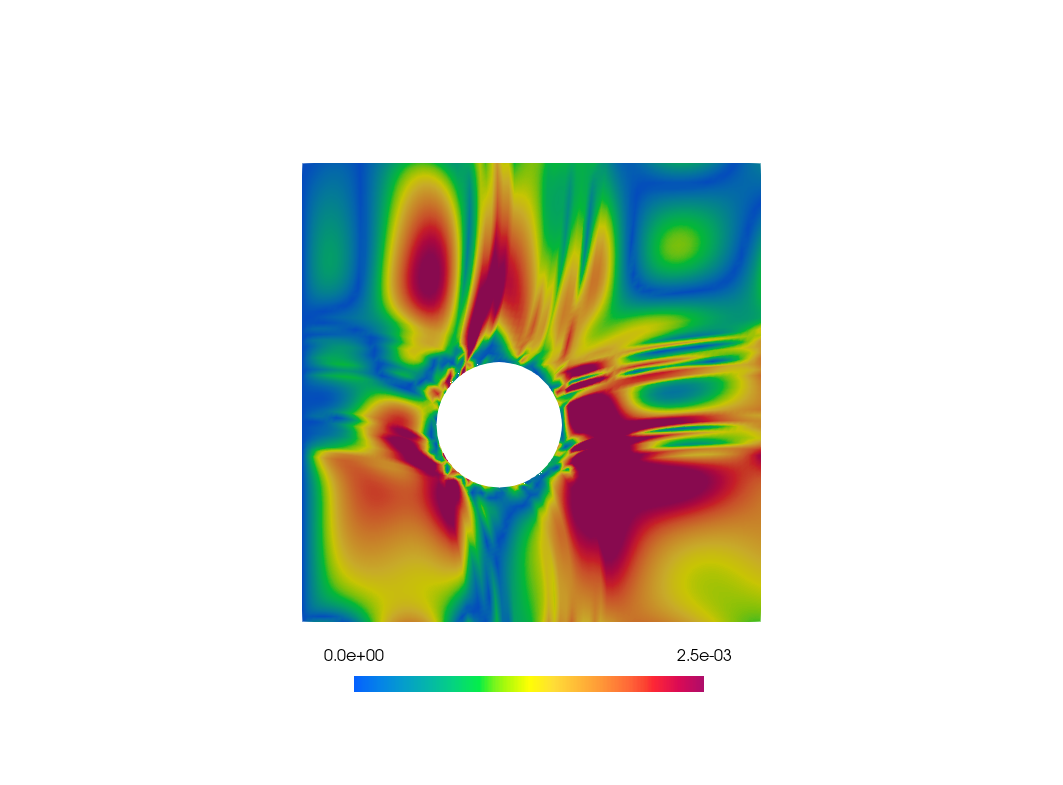}};
        \node at ($(img3.north) - (0.0,0.6)$) {$\bm{err}(\bm{\mu})$, $\varepsilon = 10^{-3}$};

        \node[anchor=south west, inner sep=0] (img4) at (11.7,0)
            {\includegraphics[width=0.4\textwidth]{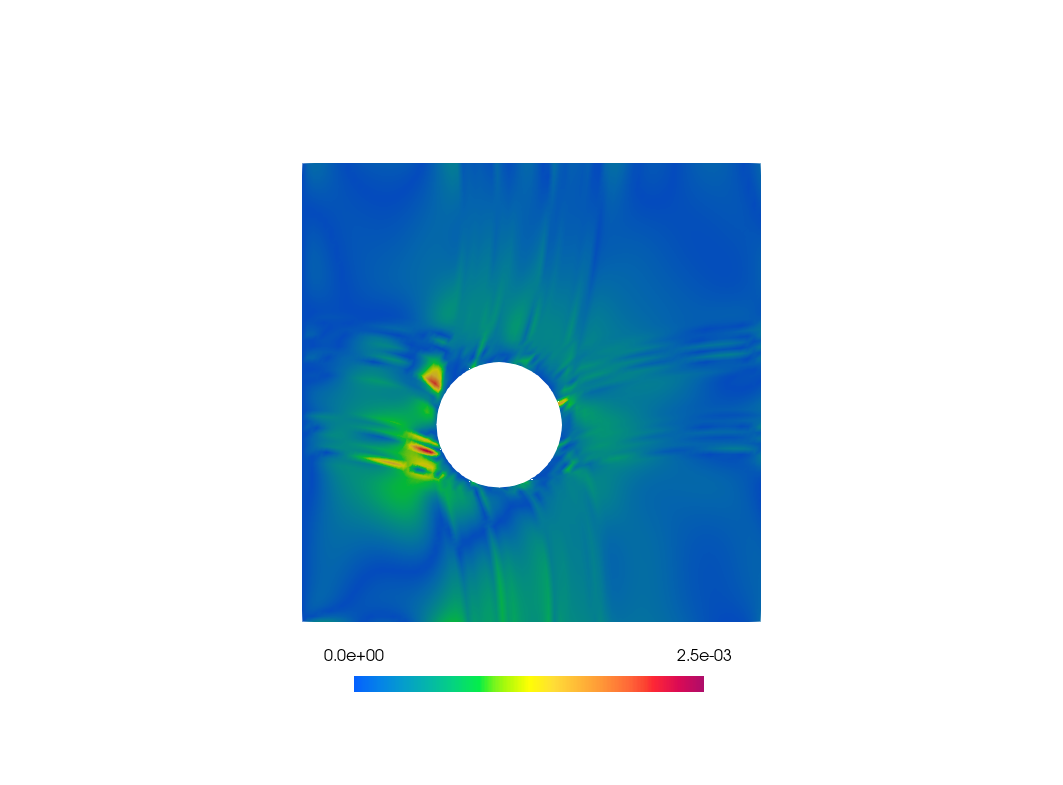}};
        \node at ($(img4.north) - (0.0,0.6)$) {$\bm{err}(\bm{\mu})$, $\varepsilon = 10^{-4}$};
    \end{tikzpicture}
    \caption{Results for the Poisson equation benchmark, obtained with \ac{ttrb}. \ac{fe} solution (left), \ac{ttrb} solution obtained with a tolerance $\varepsilon = 10^{-4}$ (center-left), and point-wise error (centre-right, with a tolerance $\varepsilon = 10^{-3}$, and right, with a tolerance $\varepsilon = 10^{-4}$). Value of the test parameter: $\bm{\mu} = (0.43, 0.26)^T$.}
    \label{fig: results poisson}
\end{figure}

\subsection{3D linear elasticity equation}
\label{subs: elasticity}

In this subsection, we solve a linear elasticity equation~\eqref{eq: strong form elasticity equation} -- with a forcing term $\vec{f}(\bm{\mu})$ replacing $\vec{0}$ in the momentum equation -- on a 3D rectangular domain of size $(L,W,H) = (2,1,0.15)$, featuring a circular hole $\mathcal{B}{(\mu_1,\mu_2)}(R)$ of fixed radius $R = 0.25$. The two shape parameters are chosen from the space $\mathcal{D} \doteq [0.5,1.5] \times [0.4,0.6]$. Homogeneous Dirichlet conditions are imposed on all boundaries, and the following forcing term is prescribed:
\begin{equation*}
    \vec{f}(\bm{\mu}): (x,y,z) \mapsto (2xy,2xy,0)^T.
\end{equation*}
The Young modulus and Poisson ratio are set to $E = 1.0$ and $\nu = 0.3$, respectively. For the spatial discretization, we take $h = 0.05$, yielding a full-order space of dimension $41310$ and requiring, on average, $4.57$\si{s} and $1.02$\si{Gb} per solution. Given the simpler parameterization in this test, we use only $N_{\mu} = 100$ offline snapshots and set the cluster sizes to $N_c = N_c^h = 8$. \\ 
The results are shown in Tab.~\ref{tb: results elasticity} and Fig.~\ref{fig: results elasticity}. Due to the larger \ac{fom} dimension, the speedup is more pronounced than in the previous test case, though, once again, the online cost of the hyper-reduction partially offsets these gains. In fact, for reduction factors of $\mathcal{O}(10^3)$-$\mathcal{O}(10^4)$, one would typically expect higher speedups than those obtained here. In terms of accuracy, the relative errors decrease with the tolerance, but with a marked spike at $\varepsilon = 10^{-3}$. This phenomenon, already noted in~\cite{MUELLER2026116790}, stems from the less regular growth of the \ac{ttrb} ranks with decreasing tolerance compared to a standard \ac{tpod}-\ac{mdeim} approach. For reference, the latter yields the following improved accuracy estimates:
\begin{equation*}
    \text{E} / \varepsilon = [0.70,0.87,3.34].
\end{equation*}

\begin{table}[t]
    \begin{tabular}{ccccc} 
        \toprule
    
        &\multicolumn{1}{c}{\textbf{E} / $\bm{\varepsilon}$}
        &\multicolumn{1}{c}{\textbf{SU-WT}}
        &\multicolumn{1}{c}{\textbf{SU-MEM}}
        &\multicolumn{1}{c}{\textbf{RF}}\\
        
        \cmidrule{1-5}

        \multirow{1}{*}[0.1cm]{}
        $\bm{\varepsilon = 10^{-2}}$ &$38.46$ &$7.07$ &$2.57$ & $8262$ \\
    
        \cmidrule{1-5}
    
        \multirow{1}{*}[0.15cm]{}
        $\bm{\varepsilon = 10^{-3}}$ &$115.80$ &$7.07$ &$2.56$ & $4131$ \\

        \cmidrule{1-5}

        \multirow{1}{*}[0.15cm]{}
        $\bm{\varepsilon = 10^{-4}}$ &$30.12$ &$6.22$ &$2.55$ & $3178$ \\
        \bottomrule 
    \end{tabular}
    \caption{Results for the linear elasticity equation, obtained with \ac{ttrb}. Metrics include: average accuracy in the energy norm ($E$); average computational speedup in terms of wall time (SU-WT); average computational speedup in terms of memory usage (SU-MEM); and reduction factor (RF). Results compare the performance of \ac{ttrb} relative to the \ac{hf} simulations.}
    \label{tb: results elasticity}
\end{table}

\begin{figure}[t]
    \centering
    \hspace*{-0.5cm}     
    \begin{tikzpicture}[x=1cm,y=1cm]
        \node[anchor=south west, inner sep=0] (img1) at (0,0)
            {\includegraphics[width=0.4\textwidth]{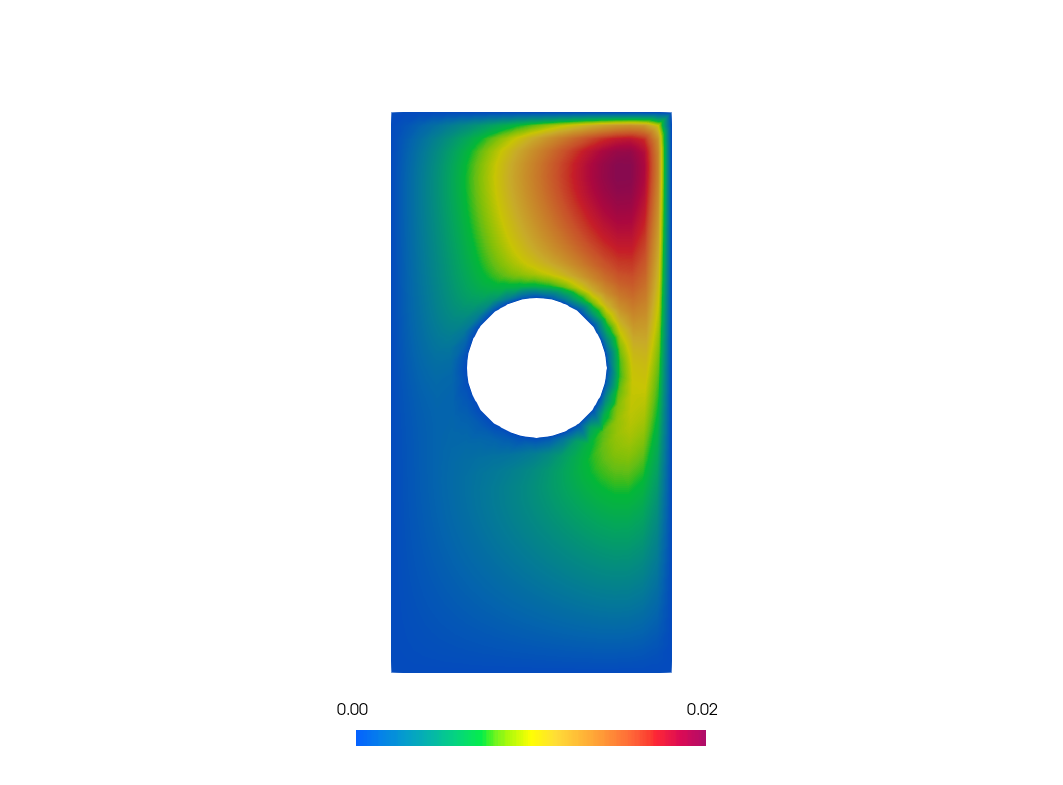}};
        \node at ($(img1.north) - (0.0,0.2)$) {$\bm{u}(\bm{\mu})$};
    
        \node[anchor=south west, inner sep=0] (img2) at (3.9,0)
            {\includegraphics[width=0.4\textwidth]{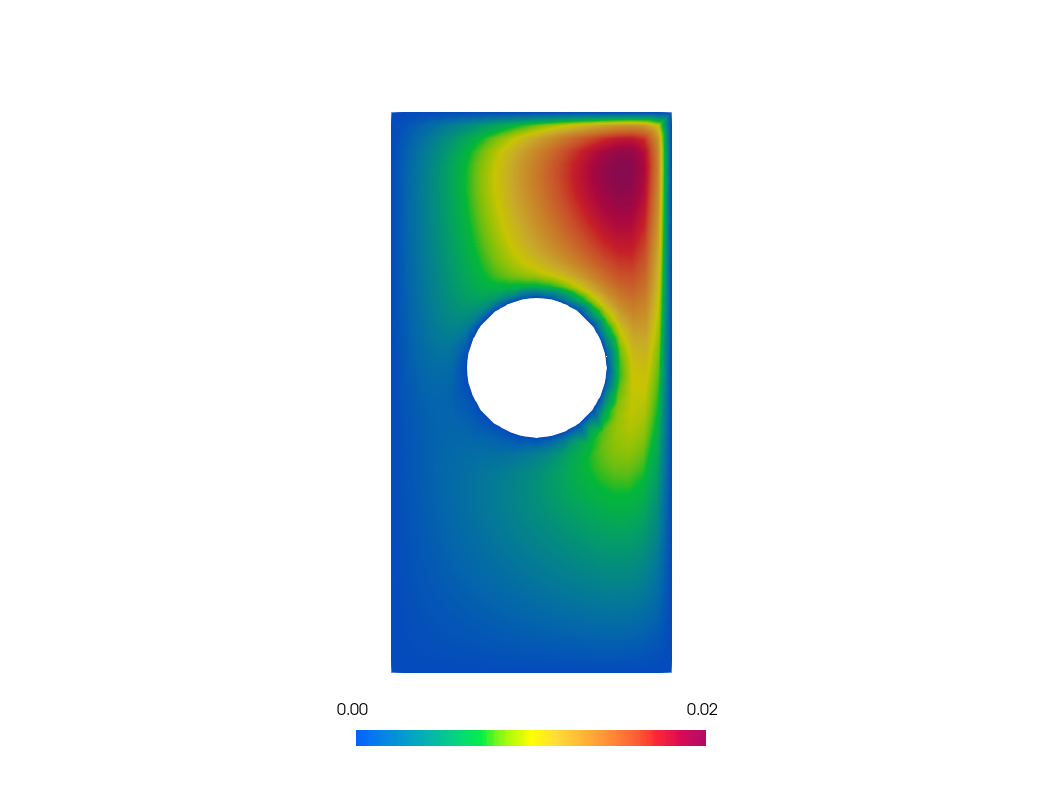}};
        \node at ($(img2.north) - (0.0,0.2)$) {$\bm{\Phi}\bm{u}_n(\bm{\mu})$, $\varepsilon = 10^{-4}$};

        \node[anchor=south west, inner sep=0] (img3) at (7.8,0)
            {\includegraphics[width=0.4\textwidth]{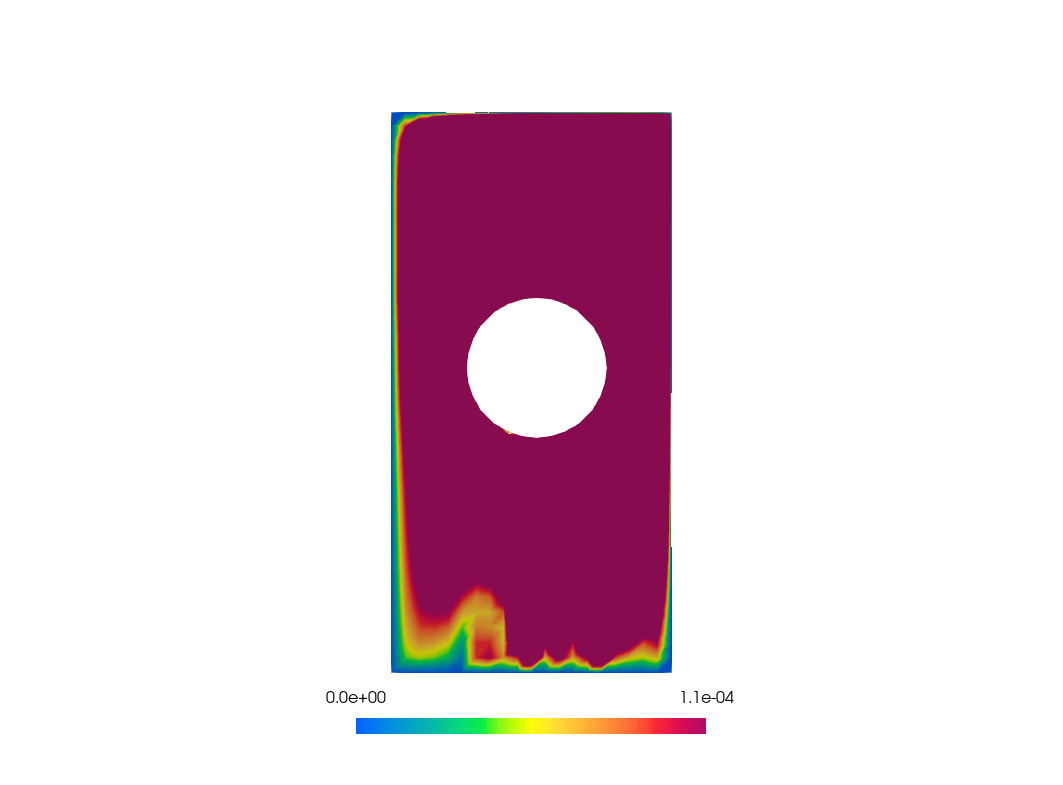}};
        \node at ($(img3.north) - (0.0,0.2)$) {$\bm{err}(\bm{\mu})$, $\varepsilon = 10^{-3}$};

        \node[anchor=south west, inner sep=0] (img4) at (11.7,0)
            {\includegraphics[width=0.4\textwidth]{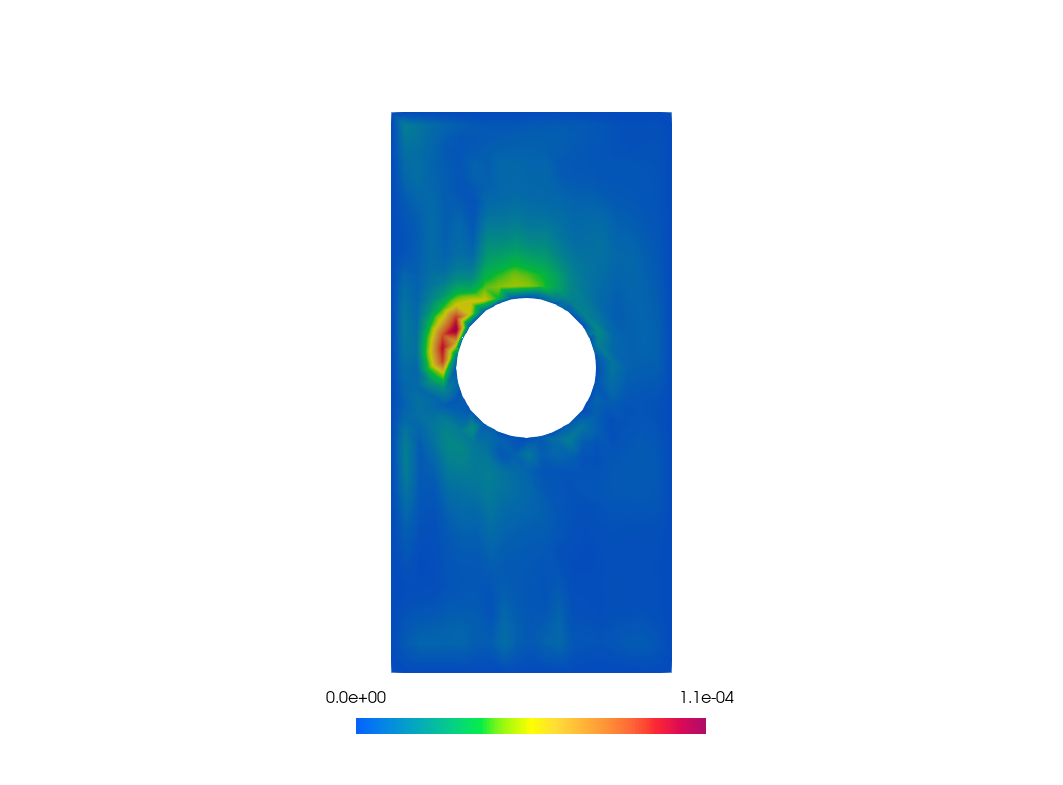}};
        \node at ($(img4.north) - (0.0,0.2)$) {$\bm{err}(\bm{\mu})$, $\varepsilon = 10^{-4}$};
    \end{tikzpicture}
    \caption{Results for the linear elasticity equation, obtained with \ac{ttrb}. Longitudinal mid-section view of the \ac{fe} solution (left), \ac{ttrb} solution obtained with a tolerance $\varepsilon = 10^{-4}$ (center-left), and point-wise error magnitude (centre-right, with a tolerance $\varepsilon = 10^{-3}$, and right, with a tolerance $\varepsilon = 10^{-4}$). Value of the test parameter: $\bm{\mu} = (1.09, 0.52)^T$.}
    \label{fig: results elasticity}
\end{figure}

\subsection{3D Stokes equation}
\label{subs: stokes}

We now solve a 3D fluid-dynamics problem modeled by the steady, incompressible Stokes equation \eqref{eq: weak formulation stokes}-\eqref{eq: stokes forms definition}. As in the previous cases, we consider a 3D rectangular domain of size $(L,W,H) = (1.25,1.25,0.15)$, containing a circular hole $\mathcal{B}{(\mu_1,\mu_2)}(R)$ with fixed radius $R = 0.25$. The two shape parameters are selected from the space $\mathcal{D} \doteq [0.625 \pm 0.2]^2$. The boundary conditions are as follows:
\begin{itemize}
    \item The parabolic inflow
    \begin{equation*}
        \vec{u}_D(\bm{\mu}): (x,y,z) \mapsto (y(W-y),0,0)^T.
    \end{equation*}
    is imposed at the inlet $\{(x,y,z) \in \Omega(\bm{\mu}): x = 0\}$. 
    \item A homogeneous Neumann condition is set at the outlet $\{(x,y,z) \in \Omega(\bm{\mu}): x = L\}$.
    \item No-slip and no-penetration conditions are enforced on the sides $\{(x,y,z) \in \Omega(\bm{\mu}): y \in \{0,W\}\}$. 
    \item No-penetration conditions are considered on the top and bottom $\{(x,y,z) \in \Omega(\bm{\mu}): z \in \{0,H\}\}$.
\end{itemize}
The full-order spaces for velocity and pressure have dimensions $(39691,2384)$, respectively, and computing a single solution requires an average of $331.41$\si{s} and $16.78$\si{Gb}. As before, we set $N_{\mu} = 100$ and $N_c = N_c^h = 8$. \\
The results are shown in Tab.~\ref{tb: results stokes} and Fig.~\ref{fig: results stokes}. Once more, we note an increase of the attained speedup, due to the larger \ac{fom} dimension. In terms of accuracy, the errors for both velocity and pressure converge with the tolerance $\varepsilon$, with multiplicative constants $\sim \mathcal{O}(1)$. This result fully showcases the approximation abilities of the proposed procedure: the accuracy is analogous to that of a Stokes problem solved on non-parametric domains. Indeed, the results seem to suggest that this problem is reducible according to the definitions in \cite{quarteroni2015reduced,Unger_2019} -- i.e., fairly low Kolmogorov $N$-width -- despite it being well-known that problems on parameterized domains are not reducible. The two reasons behind this apparent contradiction are that (i) the use of local subspaces, and (ii) employing deformation maps -- which we recall allow for integration and \ac{fe} interpolation routines to be performed on a reference configuration, instead of different domains for every value of $\bm{\mu}$ -- have a regularizing effect on the manifold of \ac{hf} solutions. This, in turn, allows to effectively use the compression of the snapshots to build accurate \acp{rom}. Finally, the results provide empirical validation of our earlier discussion on the inf-sup stability of the \ac{rom} (see Thm.~\ref{thm: reduced coupling}) for this particular choice of deformation.

\begin{table}[t]
    \begin{tabular}{ccccc} 
        \toprule
    
        &\multicolumn{1}{c}{($\textbf{E}^{\bm{u}} / \bm{\varepsilon}, \textbf{E}^{\bm{p}} / \bm{\varepsilon}$)}
        &\multicolumn{1}{c}{\textbf{SU-WT}}
        &\multicolumn{1}{c}{\textbf{SU-MEM}}
        &\multicolumn{1}{c}{\textbf{RF}}\\
        
        \cmidrule{1-5}
    
        \multirow{1}{*}[0.15cm]{}
        $\bm{\varepsilon = 10^{-2}}$ &$(0.63,1.06)$ &$19.48$ &$4.33$ & $3825$ \\

        \cmidrule{1-5}

        \multirow{1}{*}[0.15cm]{}
        $\bm{\varepsilon = 10^{-3}}$ &$(1.01,1.11)$ &$17.43$ &$4.29$ & $2338$ \\

        \cmidrule{1-5}

        \multirow{1}{*}[0.15cm]{}
        $\bm{\varepsilon = 10^{-4}}$ &$(2.11,1.84)$ &$16.23$ &$4.28$ & $1357$ \\

        \bottomrule 
    \end{tabular}
    \caption{Results for the Stokes equation benchmark, obtained with \ac{tpod}-\ac{mdeim}. Metrics include: average accuracy in the energy norm for velocity and pressure ($E^u,E^p$) normalized with respect to $\varepsilon$; average computational speedup in terms of wall time (SU-WT); average computational speedup in terms of memory usage (SU-MEM); and reduction factor (RF). Results compare the performance of \ac{ttrb} relative to the \ac{hf} simulations.}
    \label{tb: results stokes}
\end{table}

\begin{figure}[t]
    \centering
    \includegraphics[scale=0.35]{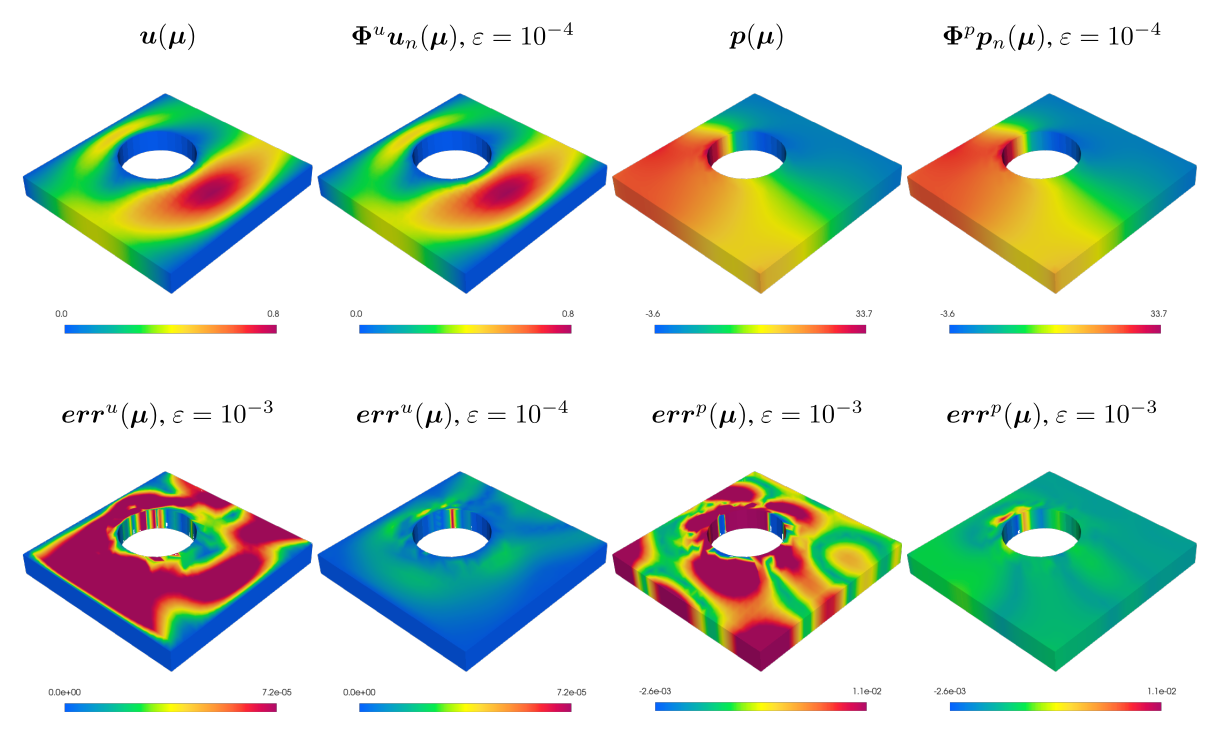}
    \caption{Results for the Stokes equation benchmark, obtained with \ac{tpod}-\ac{mdeim}. Top row: \ac{fe} velocity magnitude (left), \ac{rb} velocity magnitude (centre-left), \ac{fe} pressure (centre-right), and \ac{rb} pressure (right); the \ac{rb} velocity and pressure are obtained with a tolerance $\varepsilon = 10^{-4}$. Bottom row: point-wise velocity error magnitude (left, with a tolerance $\varepsilon = 10^{-3}$, and centre-left, with a tolerance $\varepsilon = 10^{-4}$), and point-wise pressure error (centre-right, with a tolerance $\varepsilon = 10^{-3}$, and right, with a tolerance $\varepsilon = 10^{-4}$). Value of the test parameter: $\bm{\mu} = (0.63, 0.81)^T$.} 
    \label{fig: results stokes}
\end{figure}

\subsection{3D Navier-Stokes equation}
\label{subs: navier-stokes}

We lastly solve a 3D fluid-dynamics problem modeled by the steady, incompressible Navier-Stokes equation, whose weak form is analogous to that of the Stokes problem \eqref{eq: weak formulation stokes}-\eqref{eq: stokes forms definition}, with the following modified velocity-velocity term:
\begin{equation*}
    \begin{split}
        a(\vec{u}_h,\vec{v}_h;\bm{\mu}) &= \int_{\Omega(\bm{\mu})} Re^{-1} \bm{\nabla} \vec{u}_h : \bm{\nabla} \vec{v}_h + \left((\vec{u}_h \cdot \bm{\nabla}) \vec{u}_h\right) \cdot \vec{v}_h 
        \\ & + \int_{\Gamma_D(\bm{\mu})} \tau_h \vec{u}_h \cdot \vec{v}_h - Re^{-1} (\bm{\nabla} \vec{u}_h \vec{n}(\bm{\mu})) \cdot \vec{v}_h - Re^{-1} (\bm{\nabla} \vec{v}_h \vec{n}(\bm{\mu})) \cdot \vec{u}_h.
    \end{split}
\end{equation*}
Here, $Re$ denotes the Reynolds number of the problem, which is fixed to $100$ in our test. The problem specifications, both concerning the \ac{hf} and \ac{rom} parts, are analogous to those of the previous benchmark. Computing a single solution requires an average of $1838.48$\si{s} and $89.89$\si{Gb}. \\
The results are presented in Tab.~\ref{tb: results navier-stokes} and Fig.~\ref{fig: results navier-stokes}. They closely mirror those of the previous benchmark, both in terms of accuracy and computational speedup. This outcome is expected, as the small Reynolds number yields an advection-dominated flow, thereby limiting the influence of the nonlinear term. We also observe a less regular trend in the evolution of speedups with respect to the tolerance: for instance, the speedup obtained at $\varepsilon = 10^{-3}$ exceeds that at $\varepsilon = 10^{-2}$. This effect is due to the increased number of Newton-Raphson iterations required at $\varepsilon = 10^{-2}$. We also note that even higher speedups could be expected for this test case. Compared to the Stokes test, the \ac{hf} costs are more than five times higher, while the \ac{rom} speedups are only about twice as large. Again, the potential gains are limited by the online cost of \ac{mdeim}, which must be performed at each Newton-Raphson iteration, as well as by the comparatively larger number of such iterations. Specifically, while $3$-$4$ iterations suffice for the \ac{fom}, approximately $10$ iterations are required for the \ac{rom}.

\begin{table}[t]
    \begin{tabular}{ccccc} 
        \toprule
    
        &\multicolumn{1}{c}{($\textbf{E}^{\bm{u}} / \bm{\varepsilon}, \textbf{E}^{\bm{p}} / \bm{\varepsilon}$)}
        &\multicolumn{1}{c}{\textbf{SU-WT}}
        &\multicolumn{1}{c}{\textbf{SU-MEM}}
        &\multicolumn{1}{c}{\textbf{RF}}\\
        
        \cmidrule{1-5}
    
        \multirow{1}{*}[0.15cm]{}
        $\bm{\varepsilon = 10^{-2}}$ &$(1.01,1.03)$ &$35.70$ &$8.52$ & $3664$ \\

        \cmidrule{1-5}

        \multirow{1}{*}[0.15cm]{}
        $\bm{\varepsilon = 10^{-3}}$ &$(0.92,1.20)$ &$36.11$ &$10.17$ & $2121$ \\

        \cmidrule{1-5}

        \multirow{1}{*}[0.15cm]{}
        $\bm{\varepsilon = 10^{-4}}$ &$(3.08,2.73)$ &$26.76$ &$9.73$ & $1300$ \\

        \bottomrule 
    \end{tabular}
    \caption{Results for the Navier-Stokes equation benchmark, obtained with \ac{tpod}-\ac{mdeim}. Metrics include: average accuracy in the energy norm for velocity and pressure ($E^u,E^p$) normalized with respect to $\varepsilon$; average computational speedup in terms of wall time (SU-WT); average computational speedup in terms of memory usage (SU-MEM); and reduction factor (RF). Results compare the performance of \ac{ttrb} relative to the \ac{hf} simulations.}
    \label{tb: results navier-stokes}
\end{table}

\begin{figure}[t]
    \centering
    \includegraphics[scale=0.35]{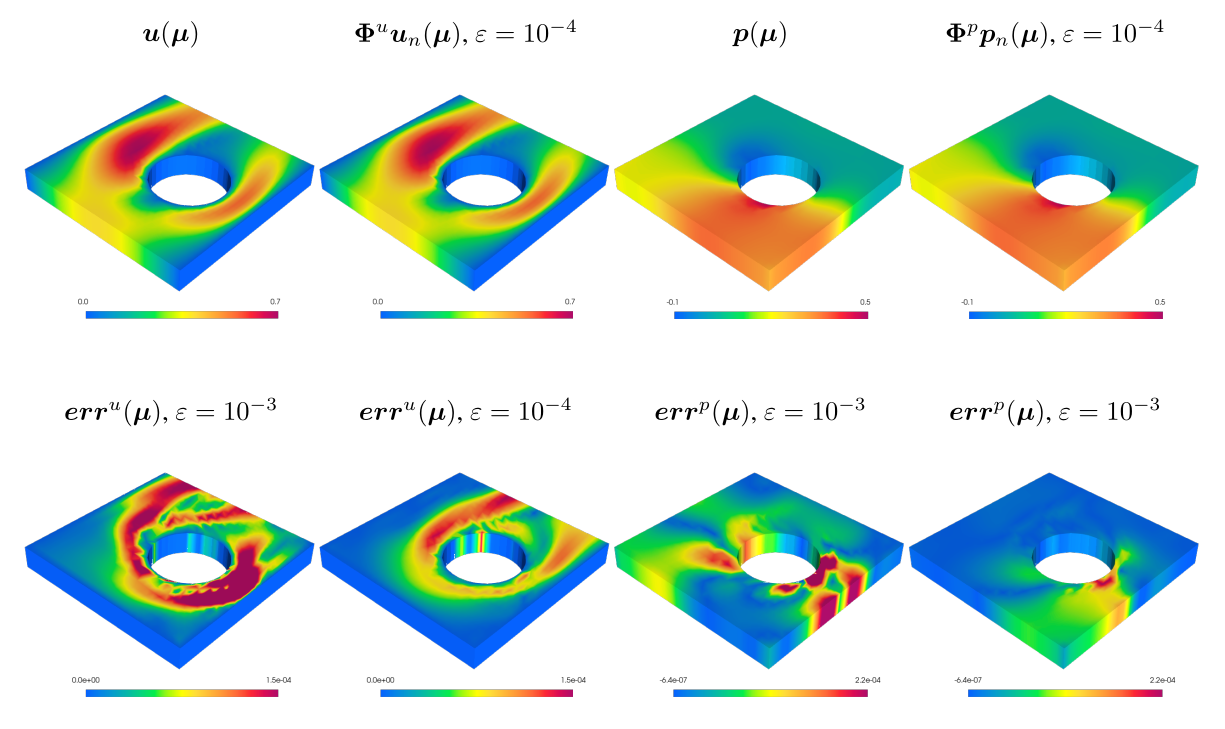}
    \caption{Results for the Navier-Stokes equation benchmark, obtained with \ac{tpod}-\ac{mdeim}. Top row: \ac{fe} velocity magnitude (left), \ac{rb} velocity magnitude (centre-left), \ac{fe} pressure (centre-right), and \ac{rb} pressure (right); the \ac{rb} velocity and pressure are obtained with a tolerance $\varepsilon = 10^{-4}$. Bottom row: point-wise velocity error magnitude (left, with a tolerance $\varepsilon = 10^{-3}$, and centre-left, with a tolerance $\varepsilon = 10^{-4}$), and point-wise pressure error (centre-right, with a tolerance $\varepsilon = 10^{-3}$, and right, with a tolerance $\varepsilon = 10^{-4}$). Value of the test parameter: $\bm{\mu} = (0.62, 0.73)^T$.} 
    \label{fig: results navier-stokes}
\end{figure}
\section{Conclusions and future work}
\label{sec: conclusions}
In this work, we present a unified framework for reduced basis approximations of \acp{pde} defined on parameterized domains. The pipeline of the method includes: (i) representing shape parameterizations via deformation mappings from a chosen reference configuration; (ii) unfitted \ac{fe} discretizations -- which rely on background Cartesian grids, along with an analytical representation of the cut boundaries -- of weak formulations on the reference configuration; (iii) subdividing offline parameters into several clusters; and (iv) for each cluster, building (local) \ac{rb} subspaces and \ac{mdeim} approximations of the problem's residual and Jacobian. We discuss the use of both \ac{tpod}-based reduced subspaces and \ac{tt}-based representations of these subspaces. We also present an efficient supremizer enrichment procedure for stabilizing reduced saddle-point problems on parameterized geometries. In particular, it is possible to enforce the inf-sup stability of the problem by working on the reference configuration, thus avoiding a costly parameter-dependent procedure.

We assess the proposed methodology on several numerical benchmarks, including the Poisson equation, a linear elasticity problem, an incompressible Stokes equation and an incompressible Navier-Stokes equation. We show that the \ac{rb} approximations yield high accuracy, with convergence rates closely tracking the decay of the chosen reduction tolerance. The quality of the approximations stems first from the use of local subspaces, which allows for local linearization of the manifolds of \ac{hf} quantities -- solutions, residuals and Jacobians  -- around the centroids in the parameter space; and second from the use of the deformation map, which also has a regularizing effect on these \ac{hf} manifolds. In terms of computational gains, the methods achieve moderately high speedups relative to \ac{hf} simulations, particularly in wall time. For problems of medium-to-low size -- such as those solved in this manuscript -- the cost of the online hyper-reduction step prevents truly ground-breaking gains. This limitation arises from the large number of \ac{fe} cells selected during the offline phase of \ac{mdeim}, which requires assembling large portions of \ac{fe} matrices and vectors to approximate the residual and Jacobian. However, the gains are expected to increase dramatically with the size of the \ac{hf} problem.

Despite these positive results, we envision two main avenues for future work. Firstly, a posteriori error estimators should be developed to certify the proposed methods, as such certification is currently lacking. Lastly, alternative strategies to the \ac{mdeim}-based interpolation could be explored, since this step accounts for most of the online cost. 

\printbibliography

\appendix

\section[Appendix A]{Weak formulation of the numerical tests in the reference configuration}
\label{sec: appendix A}

In this appendix, we describe the weak formulations of the benchmarks presented in Section~\ref{sec: results}, using the pull-backs introduced in Eq.~\eqref{eq: mapped operators}. For the sake of generality, the boundary data and forcing terms are left unspecified. \\
We begin with the Poisson equation solved in Subsection~\ref{subs: poisson}. On the deformed configuration, we have:
\begin{equation}
    \label{eq: weak poisson deformed}
    a_{\Omega}(u_h(\bm{\mu}),v_h;\bm{\mu}) + a_{\Gamma_D}(u_h(\bm{\mu}),v_h;\bm{\mu}) = l_{\Omega,\Gamma_N}(v_h;\bm{\mu}) + l_{\Gamma_D}(v_h;\bm{\mu}),
\end{equation}
where 
\begin{equation}
    \label{eq: weak poisson deformed terms}
    \begin{split}
        a_{\Omega}(u_h,v_h;\bm{\mu}) &\doteq \int_{\Omega(\bm{\mu})} \bm{\nabla} u_h \cdot \bm{\nabla} v_h,
        \\
        a_{\Gamma_D}(u_h,v_h;\bm{\mu}) &\doteq \int_{\Gamma_D(\bm{\mu})} \tau_h u_h v_h - (\bm{\nabla} u_h \cdot \vec{n}(\bm{\mu})) v_h - (\bm{\nabla} v_h \cdot \vec{n}(\bm{\mu})) u_h,
        \\ 
        l_{\Omega,\Gamma_N}(v_h;\bm{\mu}) &\doteq \int_{\Omega(\bm{\mu})} f(\bm{\mu}) v_h + \int_{\Gamma_N(\bm{\mu})} u_N(\bm{\mu}) v_h,
        \\ 
        l_{\Gamma_D}(v_h;\bm{\mu}) &\doteq \int_{\Gamma_D(\bm{\mu})} \tau_h u_D(\bm{\mu}) v_h - (\bm{\nabla} v_h \cdot \vec{n}(\bm{\mu}))u_D(\bm{\mu}).
    \end{split}  
\end{equation}
Now, omitting the parameter dependence of the deformation Jacobian for notational simplicity, the forms above can be pulled-back on the reference configuration as
\begin{equation}
    \label{eq: weak poisson reference terms}
    \begin{split}
        a_{\Omega}(u_h,v_h;\bm{\mu}) &\equiv \int_{\widetilde{\Omega}} \left(\vec{\vec{J}}^{-T}\widetilde{\bm{\nabla}} \widetilde{u}_h \cdot \vec{\vec{J}}^{-T}\widetilde{\bm{\nabla}} \widetilde{v}_h\right) \mathrm{det}(\vec{\vec{J}}),
        \\  
        a_{\Gamma_D}(u_h,v_h;\bm{\mu}) &\equiv \int_{\widetilde{\Gamma}_D} \left( \tau_h \widetilde{u}_h \widetilde{v}_h \|\vec{\vec{J}}^{-T}\widetilde{\vec{n}}\|_2 - \left(\vec{\vec{J}}^{-T}\widetilde{\bm{\nabla}} \widetilde{u}_h \cdot \vec{\vec{J}}^{-T} \widetilde{\vec{n}}\right) \widetilde{v}_h - \left(\vec{\vec{J}}^{-T}\widetilde{\bm{\nabla}} \widetilde{v}_h \cdot \vec{\vec{J}}^{-T}\widetilde{\vec{n}}\right) \widetilde{u}_h \right) \mathrm{det}(\vec{\vec{J}}),
        \\
        l_{\Omega,\Gamma_N}(v_h;\bm{\mu}) &\equiv \int_{\widetilde{\Omega}} \widetilde{f}(\bm{\mu}) \widetilde{v}_h\mathrm{det}(\vec{\vec{J}}) + 
        \int_{\widetilde{\Gamma}_N} \widetilde{u}_N(\bm{\mu}) \widetilde{v}_h\|\vec{\vec{J}}^{-T}\widetilde{\vec{n}}\|_2\mathrm{det}(\vec{\vec{J}}),
        \\
        l_{\Gamma_D}(v_h;\bm{\mu}) &\equiv \int_{\widetilde{\Gamma}_D} \left(\tau_h \widetilde{u}_D(\bm{\mu}) \widetilde{v}_h\|\vec{\vec{J}}^{-T}\widetilde{\vec{n}}\|_2 - \left(\vec{\vec{J}}^{-T}\widetilde{\bm{\nabla}} \widetilde{v}_h \cdot \vec{\vec{J}}^{-T}\widetilde{\vec{n}}\right)\widetilde{u}_D(\bm{\mu})\right)\mathrm{det}(\vec{\vec{J}}),\\  
        \widetilde{u}_D(\bm{\mu}) &\doteq u_D(\bm{\mu}) \circ \vec{\psi}(\bm{\mu})^{-1}, \quad 
        \widetilde{u}_N(\bm{\mu}) \doteq u_N(\bm{\mu}) \circ \vec{\psi}(\bm{\mu})^{-1}, \quad 
        \widetilde{f}(\bm{\mu}) \doteq f(\bm{\mu}) \circ \vec{\psi}(\bm{\mu})^{-1}.
    \end{split}  
\end{equation}

Now we tackle the linear elasticity problem solved in Subsection \ref{subs: elasticity}. On the deformed configuration, the weak form reads as:
\begin{equation}
    \label{eq: weak elasticity deformed}
    a_{\Omega}(\vec{u}_h(\bm{\mu}),\vec{v}_h;\bm{\mu}) + a_{\Gamma_D}(\vec{u}_h(\bm{\mu}),\vec{v}_h;\bm{\mu}) = l_{\Omega,\Gamma_N}(\vec{v}_h;\bm{\mu}) + l_{\Gamma_D}(\vec{v}_h;\bm{\mu}),
\end{equation}
where 
\begin{equation}
    \label{eq: weak elasticity deformed terms}
    \begin{split}
        a_{\Omega}(\vec{u}_h,\vec{v}_h;\bm{\mu}) &\doteq \int_{\Omega(\bm{\mu})} \vec{\vec{\sigma}}(\vec{u}_h) : \vec{\vec{\epsilon}}(\vec{v}_h),
        \\
        a_{\Gamma_D}(\vec{u}_h,\vec{v}_h;\bm{\mu}) &\doteq \int_{\Gamma_D(\bm{\mu})} \tau_h \vec{u}_h \cdot \vec{v}_h - (\vec{\vec{\sigma}}(\vec{u}_h) \vec{n}(\bm{\mu})) \cdot  \vec{v}_h - (\vec{\vec{\sigma}}(\vec{v}_h) \vec{n}(\bm{\mu})) \cdot \vec{u}_h,
        \\ 
        l_{\Gamma_D}(\vec{v}_h;\bm{\mu}) &\doteq \int_{\Gamma_D(\bm{\mu})} \tau_h \vec{u}_D(\bm{\mu}) \cdot \vec{v}_h - (\vec{\vec{\sigma}}(\vec{v}_h) \vec{n}(\bm{\mu})) \cdot \vec{u}_D(\bm{\mu}).
    \end{split}  
\end{equation} 
The expression for $l_{\Omega,\Gamma_N}$ is analogous to that in \eqref{eq: weak poisson deformed terms}, so we omit it here. On the reference configuration, we have:
\begin{equation}
    \label{eq: weak elasticity reference terms}
    \begin{split}
        a_{\Omega}(\vec{u}_h,\vec{v}_h;\bm{\mu}) &\equiv \int_{\widetilde{\Omega}} \left(\widetilde{\vec{\vec{\sigma}}}(\widetilde{\vec{u}}_h) : \widetilde{\vec{\vec{\epsilon}}}(\widetilde{\vec{v}}_h)\right) \mathrm{det}(\vec{\vec{J}}),
        \\  
        a_{\Gamma_D}(\vec{u}_h,\vec{v}_h;\bm{\mu}) &\equiv \int_{\widetilde{\Gamma}_D} \left( \tau_h \widetilde{\vec{u}}_h \cdot \widetilde{\vec{v}}_h \|\vec{\vec{J}}^{-T}\widetilde{\vec{n}}\|_2 - \left(\widetilde{\vec{\vec{\sigma}}}(\widetilde{\vec{u}}_h) \vec{\vec{J}}^{-T} \widetilde{\vec{n}}\right) \cdot \widetilde{\vec{v}}_h - \left(\widetilde{\vec{\vec{\sigma}}}(\widetilde{\vec{v}}_h) \vec{\vec{J}}^{-T}\widetilde{\vec{n}}\right) \cdot \widetilde{\vec{u}}_h \right) \mathrm{det}(\vec{\vec{J}}),
        \\
        l_{\Gamma_D}(\vec{v}_h;\bm{\mu}) &\equiv \int_{\widetilde{\Gamma}_D} \left(\tau_h \widetilde{\vec{u}}_D(\bm{\mu}) \cdot \widetilde{\vec{v}}_h\|\vec{\vec{J}}^{-T}\widetilde{\vec{n}}\|_2 - \left(\widetilde{\vec{\vec{\sigma}}}(\widetilde{\vec{v}}_h) \vec{\vec{J}}^{-T}\widetilde{\vec{n}}\right) \cdot \widetilde{\vec{u}}_D(\bm{\mu})\right)\mathrm{det}(\vec{\vec{J}}),\\ 
        \widetilde{\vec{\vec{\sigma}}}(\widetilde{\vec{v}}_h) &\doteq 2\gamma \widetilde{\vec{\vec{\epsilon}}}(\widetilde{\vec{v}}_h) + \lambda \mathrm{tr}\left( \vec{\vec{J}}^{-T}\widetilde{\bm{\nabla}} \widetilde{\vec{v}}_h\right) \ \vec{\vec{I}}_d, \quad 
        \widetilde{\vec{\vec{\epsilon}}} \doteq \frac{1}{2}\left(\vec{\vec{J}}^{-T}\widetilde{\bm{\nabla}} + \left(\vec{\vec{J}}^{-T}\widetilde{\bm{\nabla}}\right)^T \right), \quad 
        \widetilde{\vec{u}}_D(\bm{\mu}) \doteq \vec{u}_D(\bm{\mu}) \circ \vec{\psi}(\bm{\mu})^{-1}.
    \end{split}  
\end{equation} 
We recall that $\lambda$ and $\gamma$ are the Lamé coefficients, as introduced in \eqref{eq: stress tensor}.

We now proceed similarly for the Stokes equation solved in Subsection~\ref{subs: stokes}. The deformed equations were already presented in \eqref{eq: weak formulation stokes}-\eqref{eq: stokes forms definition} and are therefore not repeated here. The velocity-velocity block and the \ac{rhs} of the momentum equation can be written on the reference configuration as in \eqref{eq: weak poisson reference terms}, so we focus here only on the forms $b$ and $k$: 
\begin{equation}
    \label{eq: weak formulation stokes reference terms}
    \begin{split}
        b(\vec{u}_h,q_h) &\equiv \int_{\widetilde{\Omega}} \widetilde{q}_h \vec{\vec{J}}^{-T}\widetilde{\bm{\nabla}} \cdot \widetilde{\vec{u}}_h \mathrm{det}(\vec{\vec{J}}) - \int_{\Gamma_D} \widetilde{q}_h \widetilde{\vec{u}}_h \cdot \vec{\vec{J}}^{-T}\widetilde{\vec{n}} \ \mathrm{det}(\vec{\vec{J}}), \\
        k(q_h) &\equiv -\int_{\widetilde{\Gamma}_D} \widetilde{q}_h \widetilde{\vec{u}}_D \cdot \vec{\vec{J}}^{-T}\widetilde{\vec{n}} \ \mathrm{det}(\vec{\vec{J}}).
    \end{split}
\end{equation}

Finally, for the Navier-Stokes equations solved in Subsection~\ref{subs: navier-stokes}, it remains only to express the convective term in terms of quantities defined on the reference configuration. This can be done as follows:
\begin{equation}
    \label{eq: weak formulation navier-stokes reference terms}
    \begin{split}
        \int_{\Omega(\bm{\mu})} \left((\vec{u}_h \cdot \bm{\nabla}) \vec{u}_h\right) \cdot \vec{v}_h \equiv 
        \int_{\widetilde{\Omega}} \left((\widetilde{\vec{u}}_h \cdot \vec{\vec{J}}^{-T}\widetilde{\bm{\nabla}}) \widetilde{\vec{u}}_h\right) \cdot \widetilde{\vec{v}}_h \mathrm{det}(\vec{\vec{J}})
    \end{split}
\end{equation}

\section[Appendix B]{Hyper-reduction with radial basis functions}
\label{sec: appendix B}
In this section, we briefly outline the \ac{rbf}-based hyper-reduction technique used in \cite{CHASAPI2023115997}. For simplicity, we summarize only the global variant of the method, omitting the localized extension introduced by the authors. Nevertheless, incorporating locality would simply require replacing the calls to \ac{mdeim} in Algs.~\ref{alg: offline phase}-\ref{alg: online phase} with the steps described below. The essential workflow is as follows:
\begin{enumerate}
    \item In the offline phase, a dictionary of coefficients for the affine decompositions of the residuals and Jacobians (see Eqs.~\eqref{eq: affine decomp} and \eqref{eq: hypred coeffs}) is computed for every offline parameter. This list is then used to train an efficient interpolation procedure.
    \item In the online phase, given a new parameter $\bm{\mu}^*$, the corresponding coefficient is rapidly computed using the aforementioned interpolation method.
\end{enumerate}
Following the presentation in Subsection~\ref{subs: reduced equations}, we focus on the approximation of the residual, as the Jacobian case is a straightforward extension. First, the set $\{\bm{\theta}^r(\bm{u}(\bm{\mu}_k),\bm{\mu}_k)\}_{k=1}^{N_{\mu}}$ is computed using the \ac{mdeim} procedure detailed in Subsection~\ref{subs: reduced equations}. Next, the interpolation weight matrix $\bm{W} \in \R^{N_{\mu} \times n^r}$ is computed so that
\begin{equation}
\label{eq: rbf offline}
    \sum_{i=1}^{N_{\mu}} \varphi_{q,i}\left(\|\bm{\mu}_k - \bm{\mu}_i\|_2\right) \bm{W}[i,q] \equiv \bm{\theta}^r(\bm{u}(\bm{\mu}_k),\bm{\mu}_k)[q] \quad \forall \ q = 1,\hdots,n^r,\ k = 1,\hdots,N_{\mu}.
\end{equation}
Here, $\varphi_{q,i}(\bm{\mu}) \doteq \varphi_{q,i}(\|\bm{\mu} - \bm{\mu}_i\|_2)$ denotes the $q$th \ac{rbf} associated with the $i$th center point $\bm{\mu}_i$. Then, given the online parameter $\bm{\mu}^*$, its corresponding coefficient $\bm{\theta}^r(\bm{u}(\bm{\mu}^*),\bm{\mu}^*)$ is interpolated using the weight matrix computed offline:
\begin{equation}
    \label{eq: rbf online}
    \bm{\theta}^r(\bm{u}(\bm{\mu}^*),\bm{\mu}^*)[q] \approx \sum_{i=1}^{N_{\mu}} \varphi_{q,i}\left(\|\bm{\mu}^* - \bm{\mu}_i\|_2\right) \bm{W}[i,q] \quad \forall \ q = 1,\hdots,n^r.
\end{equation}
We refer to \cite{Buhmann_2000} for a comprehensive review of the \ac{rbf} method, including the well-posedness analysis of \eqref{eq: rbf offline}. As illustrated by the numerical results in Subsection~\ref{subs: poisson}, this procedure features extremely low offline costs but suffers from reduced accuracy (see Tb.~\ref{tb: results poisson rbf} for further details). Indeed, compared to \ac{mdeim}, the interpolation procedure bypasses the need for the online evaluation of the residual on the mesh cells selected by the greedy scheme, as described in Eq.~\eqref{eq: mdeim approximation}. This operation constitutes the overwhelming majority of the online cost of \ac{mdeim}; since it is entirely absent in the \ac{rbf} scheme, the latter achieves substantially higher speedups. On the other hand, the online evaluation of (a portion of) the residual in \ac{mdeim} - and more broadly in \acp{eim} -- controls the hyper-reduction error for any online parameter (see \cite{chaturantabut2010nonlinear} for a detailed discussion). This mechanism is absent in the \ac{rbf} approach, which instead relies solely on the interpolation \eqref{eq: rbf online}. As a consequence, the \ac{rbf} method incurs substantially larger hyper-reduction errors, effectively preventing it from being reliably applicable in general settings.

\end{document}